\theoremstyle{plain}
 \newtheorem{theorem}{Theorem}[section]
 \newtheorem{lemma}[theorem]{Lemma}
 \newtheorem{proposition}[theorem]{Proposition}
 \newtheorem{corollary}[theorem]{Corollary}
 \newtheorem{conjecture}[theorem]{Conjecture}
 \newtheorem{claim}[theorem]{Claim}
 \newtheorem{observation}[theorem]{Observation}
\theoremstyle{def}
 \newtheorem{definition}[theorem]{Definition}
\theoremstyle{remark}
\newtheorem{remark}[theorem]{Remark}
\newenvironment{proofof}[1]{\par
  \pushQED{\qed}%
  \normalfont \topsep6\p@\@plus6\p@\relax
  \trivlist
  \item[\hskip\labelsep
        \itshape
    Proof of #1\@addpunct{.}]\ignorespaces
}{%
  \popQED\endtrivlist\@endpefalse
}
\numberwithin{equation}{section}
\newcommand\nc\newcommand
\newcommand\dmo\DeclareMathOperator
\dmo{\re}{Re}
\dmo{\im}{Im}
\dmo{\sign}{sign}
\dmo{\spt}{spt}
\dmo{\supp}{supp}
\dmo{\sym}{Sym}
\dmo{\R}{\mathbb{R}}
\dmo{\C}{\mathbb{C}}
\dmo{\N}{\mathbb{N}}
\dmo{\Z}{\mathbb{Z}}
\dmo{\Id}{{Id}}
\nc{\erdos}{Erd\H os }
\nc{\er}{Erd\H os--R\'enyi } 
\def \<{\langle}
\def \>{\rangle}
\nc{\norm}[1]{\left \| #1 \right \|}
\nc{\expo}[1]{\exp \left( #1 \rule{0mm}{3mm}\right)}
\nc{\cond}{\left\vert\vphantom{\frac{1}{1}}\right.}
\DeclarePairedDelimiter\parentheses{\lparen}{\rparen}
\newcommand{\harp}{\overset{\rightarrow}}
\newcommand{\garp}{\overset{\leftarrow}}
\dmo{\e}{\mathbb{E}}
\dmo{\var}{Var}
\dmo{\pr}{\mathbb{P}}
\dmo{\un}{\mathbbm{1}}
\nc{\eqd}{\,{\buildrel d \over =}\,}
\dmo{\ber}{Ber}
\dmo{\sber}{Ber_{\pm}}
\dmo{\bin}{Bin}
\nc{\bad}{\mathcal{B}}
\nc{\event}{\mathcal{E}}
\nc{\good}{\mathcal{G}}
\nc{\pro}[1]{\mathbb{P}\parentheses*{#1 \rule{0mm}{0mm}}}
\nc{\pros}[2][]{\mathbb{P}_{#1} \parentheses*{ #2 \rule{0mm}{3mm}}}
\nc{\ero}[1]{\mathbb{E}\parentheses*{ #1 \rule{0mm}{3mm}}}
\nc{\eros}[2][]{\mathbb{E}_{#1} \parentheses*{ #2 \rule{0mm}{3mm}}}
\nc{\set}[1]{\left\{ #1 \right\}}
\dmo{\tr}{tr}
\dmo{\rank}{rank}
\dmo{\corank}{corank}
\def \tran {\mathsf{T}}
\nc{\mat}[4][]{\begin{pmatrix} #1 & #2 \\ #3 & #4\end{pmatrix}}
\dmo{\idd}{\mat[1]{0}{0}{1}}
\dmo{\jdd}{\mat[0]{1}{1}{0}}
\nc{\twov}[2][]{\begin{pmatrix} #1 \\ #2 \end{pmatrix}}
\dmo{\eone}{\twov[1]{0}}
\dmo{\etwo}{\twov[0]{1}}
\nc{\Span}{\operatorname{span}}
\nc{\eps}{\varepsilon}
\nc{\ep}{\epsilon}
\nc{\mA}{\mathcal{A}}
\nc{\mB}{\mathcal{B}}
\nc{\mC}{\mathcal{C}}
\nc{\mD}{\mathcal{D}}
\nc{\mE}{\mathcal{E}}
\nc{\mF}{\mathcal{F}}
\nc{\mG}{\mathcal{G}}
\nc{\mH}{\mathcal{H}}
\nc{\mI}{\mathcal{I}}
\nc{\mJ}{\mathcal{J}}
\nc{\mK}{\mathcal{K}}
\nc{\mL}{\mathcal{L}}
\nc{\mM}{\mathcal{M}}
\nc{\mN}{\mathcal{N}}
\nc{\mO}{\mathcal{O}}
\nc{\mP}{\mathcal{P}}
\nc{\mQ}{\mathcal{Q}}
\nc{\mR}{\mathcal{R}}
\nc{\mS}{\mathcal{S}}
\nc{\mT}{\mathcal{T}}
\nc{\mU}{\mathcal{U}}
\nc{\mV}{\mathcal{V}}
\nc{\mW}{\mathcal{W}}
\nc{\mX}{\mathcal{X}}
\nc{\mY}{\mathcal{Y}}
\nc{\mZ}{\mathcal{Z}}
\nc{\tx}{\tilde{x}}
\nc{\ty}{\tilde{y}}
\nc{\tpi}{\tilde{\pi}}
\DeclareMathSymbol{\wtil}{\mathord}{largesymbols}{"65}
\newcommand\lowerwtil{%
  \text{\smash{\raisebox{-1.3ex}{%
    $\wtil$}}}}
\newcommand\witil[1]{%
  \mathchoice
    {\accentset{\displaystyle\lowerwtil}{#1}}
    {\accentset{\textstyle\lowerwtil}{#1}}
    {\accentset{\scriptstyle\lowerwtil}{#1}}
    {\accentset{\scriptscriptstyle\lowerwtil}{#1}}
}
\nc{\tM}{\witil{M}}
\nc{\tR}{\witil{R}}
\nc{\tS}{\witil{S}}
\nc{\tT}{\witil{T}}
\nc{\tX}{\witil{X}}
\nc{\tY}{\witil{Y}}
\nc{\tZ}{\witil{Z}}
\nc{\tA}{\witil{A}}
\nc{\tD}{\witil{D}}
\nc{\tevent}{{\witil{\event}}}
\nc{\tbad}{\witil{\bad}}
\nc{\hx}{\hat{x}}
\nc{\hy}{\hat{y}}
\nc{\hz}{\hat{z}}
\dmo{\degr}{deg}
\dmo{\ones}{\mathbf{1}}
\nc{\pair}{\mathbf{p}}
\nc{\qair}{\mathbf{q}}
\nc{\eye}{{\mathbf{I}_2}}
\nc{\jay}{{\mathbf{J}_2}}
\nc{\kay}{{\mathbf{K}_2}}
\nc{\rowpair}{(i_1,i_2)}
\dmo{\sls}{sls}
\dmo{\SLS}{SLS}
\dmo{\codeg}{codeg}
\dmo{\discrep}{discrep}
\dmo{\co}{co}
\dmo{\Co}{Co}
\dmo{\Ex}{Ex}
\dmo{\ex}{ex}
\nc{\x}{k}
\dmo{\exbar}{\overline{ex}}
\nc{\nbs}[4][]{\mN^{#1}_{(#2,#3)}(#4)}
\nc{\hmR}{\hat{\mR}}
\dmo{\Steps}{Steps}
\dmo{\steps}{steps}
\dmo{\Flats}{Flats}
\dmo{\Perm}{Perm}
\dmo{\disc}{disc}
\nc{\pe}{p}
\dmo{\simple}{simple}
\dmo{\reg}{reg}
\tikzset{
	My Style/.style={
        circle,
        draw,
        fill          = black!50,
        inner sep     = 0pt,
        minimum width =4 pt
    }   
}
\newcommand{\eyepic}{
\begin{tikzpicture}[thick,scale=1.9,->,
                   shorten >=2pt+0.5*\pgflinewidth,
                   shorten <=2pt+0.5*\pgflinewidth,
                        ]
\path[draw] 
       node [My Style] at (.2,0) {}  
       node [My Style] at (1.2,1) {} 
       node [My Style] at (.2,1) {}  
       node [My Style] at (1.2,0) {} ;
        \node at (0,0) {$i_2$};
        \node at (0,1) {$i_1$};
        \node at (1.4,0) {$j_2$};
        \node at (1.4,1) {$j_1$};

\begin{scope}	[arrows={-angle 60}]
    \draw (.2,0) -- (1.2,0) ; 
    \draw (.2,1) -- (1.2,1) ;
\end{scope}

\begin{scope}   [arrows={-angle 60}, dashed]   
     \draw (.2,0) -- (1.2,1)  ; 
     \draw (.2,1) -- (1.2,0)  ;
\end{scope}

\end{tikzpicture}
} 
\newcommand{\jaypic}{
\begin{tikzpicture}[thick,scale=1.9,->,
                   shorten >=2pt+0.5*\pgflinewidth,
                   shorten <=2pt+0.5*\pgflinewidth,
                        ]
\path[draw] 
       node [My Style] at (.2,0) {}  
       node [My Style] at (1.2,1) {} 
       node [My Style] at (.2,1) {}  
       node [My Style] at (1.2,0) {} ;
        \node at (0,0) {$i_2$};
        \node at (0,1) {$i_1$};
        \node at (1.4,0) {$j_2$};
        \node at (1.4,1) {$j_1$};

\begin{scope}	[arrows={-angle 60}]

     \draw (.2,0) -- (1.2,1)  ; 
     \draw (.2,1) -- (1.2,0)  ;
\end{scope}

\begin{scope}   [arrows={-angle 60}, dashed]  
    \draw (.2,0) -- (1.2,0) ; 
    \draw (.2,1) -- (1.2,1) ;     
\end{scope}

\end{tikzpicture}
}
\title{Discrepancy properties for random regular digraphs}
\author{Nicholas A. Cook}
\address{Department of Mathematics, UCLA, Los Angeles, CA 90095-1555}
\email{nickcook@math.ucla.edu}
\keywords{Random regular graph, discrepancy property, concentration of measure, method of exchangeable pairs.}
\begin{document}

\maketitle

\begin{abstract}
For the uniform random regular directed graph we prove concentration inequalities for (1) codegrees and (2) the number of edges passing from one set of vertices to another.
As a consequence, we can deduce discrepancy properties for the distribution of edges essentially matching results for \er digraphs obtained from Chernoff-type bounds.
The proofs make use of the method of exchangeable pairs, developed for concentration of measure by Chatterjee in \cite{Chatterjee}.
Exchangeable pairs are constructed using two involutions on the set of regular digraphs: a well-known ``simple switching" operation, as well as a novel ``reflection" operation. 
\end{abstract}

\section{Introduction}			\label{sec_intro}

For $n\ge 1$ and $d\in [n]=\set{1,\dots,n}$, let $\mD_{n,d}$ denote the set of $d$-regular directed graphs on $n$ labeled vertices -- that is, with each vertex having $d$ in-neighbors and $d$ out-neighbors (allowing self-loops). Let $\Gamma=(V,E)$ be a uniform random element of $\mD_{n,d}$. 
One may identify $\Gamma$ with a uniform random $d$-regular bipartite graph on $n+n$ vertices in the obvious way. 
We will stick with the digraph interpretation, though we note that all of our results can be extended to cover $(d,d')$-regular bipartite graphs on $m+n$ vertices; see \Cref{sec_extend}.

Our aim in this paper is to show that two types of statistics of $\Gamma$ are sharply concentrated when $n$ is large and $d$ is sufficiently large depending on $n$. 
We identify $V$ with $[n]$ throughout, and view $E$ as a subset of $[n]^2$. We sometimes write $i\rightarrow j$ to mean $(i,j)\in E$.
\begin{enumerate}
\item {\bf Codegrees:} Denote the number of common out-neighbors of a fixed pair of vertices $i_1,i_2\in [n]$ by
$$\harp{\co}_\Gamma(i_1,i_2):=\left|\Big\{ j\in [n]: i_1\rightarrow j \mbox{ and } i_2\rightarrow j\Big\}\right|$$
and the number of common in-neighbors by
$$\garp{\co}_\Gamma(i_1,i_2):=\left|\Big\{ j\in [n]: i_1\leftarrow j \mbox{ and } i_2\leftarrow j\Big\}\right|.$$
We expect these statistics to be of size roughly $d^2/n=\pe^2n$, where we denote by $\pe:=d/n$ the average edge density for $\Gamma$.\\

\item {\bf Edge counts:} For fixed subsets of vertices $A,B\subset [n]$, denote the number of edges passing from $A$ to $B$ by
$$e_\Gamma(A,B):= \left| E\cap (A\times B) \right|.$$
We expect this statistic to be of size roughly $\pe|A||B|=: \mu(A,B)$. 
We refer to the deviation
\begin{equation}
\disc_\Gamma(A,B) := \big| e_\Gamma(A,B) - \mu(A,B) \big|
\end{equation}
as the \emph{(edge) discrepancy of $\Gamma$ at $(A,B)$}. 
\end{enumerate}
We will loosely use the term \emph{edge discrepancy property} to refer to a bound on edge discrepancies holding uniformly for all pairs $(A,B)$, or at least for all pairs of ``sufficiently large" sets $A,B$.

The discrepancy properties and control on codegrees proved in the present work were an important component in the recent proof by the author that random 0/1 matrices with constant row and column sum $d$ are invertible with high probability, assuming $\min(d, n-d)=\omega(\log^2n)$; see \cite{Cook:sing}. 
We expect that the results of this paper will also be useful for questions of a more graph-theoretic nature. 

\subsection{Background on random regular graphs}

Random graphs have been studied intensively since their popularization by Erd\H os as a tool for proving of the existence of graphs with certain properties, often when no constructive approach was known (such as graphs with arbitrarily large chromatic number and girth; see \cite{AlSp}).
They have since found myriad applications in computer science, physics, biology, and other fields.
The most commonly used model is the \emph{binomial} or \emph{Erd\H os--R\'enyi} random graph $G(n,p)$, in which each of the ${n\choose 2}$ possible edges is present independently of all others with probability $p$. 
We may similarly define the \er digraph $D(n,p)$, which has $n^2$ possible directed edges.

Random regular graphs emerged as a popular model much later, and their origin can also be traced back to a question in extremal combinatorics: are there expander graphs of bounded degree?
(Strictly speaking the term ``expander" only makes sense for a sequence of graphs; the reader may consult the survey \cite{HLW} for a precise statement of this question.)
This was answered in the affirmative by Pinsker in 1973 \cite{Pinsker} (and independently by Barzdin and Kolmogorov in the bipartite case \cite{BaKo}) who showed that certain random regular graphs of constant degree are expanders with positive probability.

Since then, much of the interest in random regular graphs has been due to their robust connectivity properties as compared to \er graphs. 
Indeed, while \er graphs are asymptotically almost surely \emph{disconnected} when the average degree is smaller than $\log n$, 
random regular graphs are not only connected with high probability for degree as small as $3$, they are \emph{nearly Ramanujan} (meaning they are near-optimal expanders in a certain sense; see \cite{Friedman}).

Random regular graphs are often harder to analyze than their binomial counterparts since the $d$-regularity constraint destroys the independence of the edges.
Nevertheless, asymptotic enumeration results were obtained in \cite{BeCa}, \cite{Bollobas} and \cite{Wormald:problems}.
The introduction by Bollob\'as in \cite{Bollobas} of the \emph{configuration model} for the uniform random regular graph allowed for many later developments (ideas similar to the configuration model were also present in \cite{BeCa} and \cite{Wormald:problems}).
Here one generates a uniform random regular graph by the following procedure:
\begin{enumerate}
\item Associate to each vertex $v\in V$ a ``fiber" $F_v$ of $d$ ``points", so that there are 
$$\bigg| \bigcup_{v\in V} F_v \bigg| = nd$$ 
points in total. 
\item Select a pairing $\mP$ of the $nd$ points uniformly at random.
\item Now collapse each fiber $F_v$ to the associated vertex $v$: we say that $v$ is connected to $w$ if there are points $v'\in F_v$, $w'\in F_w$ such that $v'w'\in\mP$. 
In general the resulting graph $G=G(\mP)$ is a $d$-regular \emph{multi-graph}; however, conditional on the event $\event_{\simple}$ that that $\mP$ collapses to a simple graph, it is easy to check that $G(\mP)$ is a uniform random $d$-regular graph. Hence we may
\item repeat this process if necessary until we obtain a simple graph.
\end{enumerate}
The procedure can be modified to generate uniform random $d$-regular directed or bipartite graphs in the obvious manner.
When using the configuration model to bound the probability of an event $\bad$ holding for a uniform random regular graph, one ``lifts" $\bad$ to the corresponding event $\bad'$ for the pairing $\mP$, which is often easier to analyze. 
Then one can bound
\begin{equation}		\label{boundsimple}
\pro{\bad} = \pro{\bad' | \event_{\simple}} \le \frac{\pro{\bad'}}{\pro{\event_{\simple}}}.
\end{equation}

A particularly nice feature is that working with the random pairing $\mP$ rather than with the graph $G$ gives access to concentration of measure inequalities for martingale sequences (e.g.\ the Azuma--Hoeffding inequality).
(However, we will see below that the method of exchangeable pairs can be applied directly to the uniform measure on random regular graphs.)
A drawback is that $\pro{\event_{\simple}}$ becomes quite small when the degree $d$ is large. 
Indeed, the enumeration result of \cite{BeCa} implies the estimate
\begin{equation}		\label{psimple}
\pro{\event_{\simple}} = \expo{ -\Theta(d^2)}
\end{equation}
for $d$ fixed (see \Cref{sec_notation} for definitions of asymptotic notation used in this paper).
This asymptotic was later shown by McKay and Wormald in \cite{McWo81} to hold when $d=o(\sqrt{n})$. 

An advantage of the concentration results given in \Cref{thm_main} below is that they are proved for the uniform random regular digraph directly, rather than through the configuration model, and hence do not have to compete with with the small probability in \eqref{psimple}. 
In particular, our results are not limited to $d=o(\sqrt{n})$ (in fact the bounds are strongest for dense graphs).

In a similar spirit to the configuration model, it is possible to deduce some properties of random regular graphs from known results for \er graphs.
Let us consider the case of digraphs. 
With $\Gamma$ a uniform random $d$-regular digraph, draw $D$ from $D(n,p)$ with $p=d/n$, and let $\event_{\reg}$ be the event that $D$ is a $d$-regular graph. 
Note that
$$D\big| \event_{\reg} \eqd \Gamma.$$
We have the asymptotic lower bound
\begin{equation}	\label{psimp}
\pr(\event_{\reg}) = \expo{-\Omega\Big(n \log\big[ \min(d,n-d)\big]\Big)}
\end{equation}
which follows from an asymptotic formula for the number of $d$-regular digraphs on $n$ vertices, established for the sparse case $d=np=o(\sqrt{n})$ by McKay and Wang in \cite{McWa} and for the dense range $\min(d,n-d) \gg n/\log n$ by Canfield and McKay in \cite{CaMc}.
Although enumeration results for $\sqrt{n}\ll d\ll n/\log n$ are unavailable as of this writing (though it is natural to conjecture that the formula \eqref{psimp} extends to hold in this range), in \cite{Tran} Tran used an argument from \cite{ShUp} of Shamir and Upfal to show that for $d=\Omega(\log n)$, 
\begin{equation}		\label{psimplb}
\pr(\event_{\reg}) \ge \expo{-O\big(n\sqrt{d}\big)}.
\end{equation}
Similarly to \eqref{boundsimple} we hence have that
\begin{equation}	\label{drestriction}
\pro{\bad} \le \frac{\pro{\bad \mbox{ holds for $D$}}}{\pro{\event_{\reg}}} \le \expo{O\big(n\sqrt{d}\big)} \pro{\bad \mbox{ holds for $D$}}
\end{equation}
for $d=\Omega(\log n)$ by \eqref{psimplb} (and for $d=o(\sqrt{n})$ or $\min(d,n-d)=\omega(n/\log n)$ we may instead use \eqref{psimp}). 
We refer to this approach as the \emph{restriction strategy}, as it views the uniform measure on the set of $d$-regular graphs as the restriction of a product measure on the full space of graphs. 
With \eqref{drestriction} one is limited to importing properties for random regular graphs which hold with probability $1-O(\exp(-Cn\sqrt{d}))$ for some sufficiently large $C$ for graphs in $D(n,d/n)$. 
We note in particular that the results of the present work deal with events that are too large to be controlled by the restriction method.

In order to go beyond restriction of product measures, 
we must make use of some properties of random $d$-regular graphs besides the crude parameter of edge density $p=d/n$. 
We would like to show that the events
\begin{equation}	\label{independs}
\event_{\reg} \quad \mbox{ and } \quad \big\{\bad \mbox{ holds for $D$}\big\} 
\end{equation}
are approximately independent in some sense.
Indeed, the bound \eqref{drestriction} assumes the worst case that $\set{\bad \mbox{ holds for $D$}}\subset \event_{\reg}$. 
(Note that correlation inequalities such as the FKG bound cannot be applied in this setting as $\event_{\reg}$ is not monotone.)

A natural step in this direction is to understand symmetries of the set of regular graphs -- with a slight abuse of notation we denote this set by $\event_{\reg}$.
Focusing on symmetries of a ``local nature", i.e.\ ones that change only a small number of edges, leads naturally to the method of \emph{switchings}, developed by McKay and Wormald in several works (see the survey \cite{Wormald}). 
For regular digraphs, perhaps the most obvious symmetry is to change between the following two configurations of edges at fixed vertices $i_1,i_2,j_1,j_2$:
\begin{center}

\eyepic \hspace{2.2cm} \jaypic

\end{center}
where we use a solid arrow to depict an edge and a dashed arrow to indicate the absence of an edge.
We refer to this modification as a \emph{simple switching}.
Roughly speaking, the probability that an event $\bad$ holds for random regular graphs can be estimated by performing a switching in a random fashion (such as by sampling the indices $i_1,i_2,j_1,j_2$ at random) and estimating the probability that the graph enters or leaves the event $\bad$ under the application of the switching.
There is a sense in which this approach is showing the approximate independence we sought in \eqref{independs}: we are performing operations which preserve the event $\event_{\reg}$, and seek to show that these tend to disrupt the event $\bad$.

McKay introduced the method of switchings in \cite{McKay2}, and in \cite{McKay} used it to prove bounds on the probability of occurrence of cycles of various length in a random regular graph of bounded degree. Through the trace method this allowed him to deduce that the limiting spectral distribution of the adjacency matrix is that of the infinite $d$-regular tree, now known as the \emph{Kesten--McKay distribution}. (For $d$ tending to infinity with $n$, the spectral distribution is instead governed by the \emph{semi-circle law}, as was proved by Dumitriu and Pal \cite{DuPa} in the sparse regime $d=n^{o(1)}$, and by Tran, Vu and Wang in the general case using the restriction strategy \cite{TVW}.)
Since then, the method has been extended and applied to several problems on random regular graphs, such as to extend the asymptotic enumeration results of \cite{BeCa}, \cite{Bollobas}, \cite{Wormald:problems} to $d=o(\sqrt{n})$ in \cite{McWo81}.
See the survey \cite{Wormald} for more background on switchings. 
See also \cite{Mathoverflow} for a simple illustration of the method for the problem of estimating the probability that a random permutation has a fixed point.

\subsection{Codegrees, edge discrepancy, and pseudo-randomness} 

Parallel to the study of random graphs, there has been a rich literature on \emph{pseudo-random graphs}, which is an imprecise term for deterministic graphs that exhibit properties held by (Erd\H os--R\'enyi) random graphs with high probability.
Systematic research into pseudo-random graphs was initiated by Thomason in \cite{Thomason1}, \cite{Thomason2}, where he introduced the notion of \emph{jumbled graphs} (see \Cref{def_jumbled} below).
In \cite{CGW}, Chung, Graham and Wilson defined \emph{quasi-random graphs} and proved that several ``pseudo-randomness" properties are in fact equivalent. 
See also the survey \cite{KrSu} and Chapter 9 of \cite{AlSp}.

In particular, the works \cite{Thomason1} and \cite{CGW} highlighted a close connection between codegrees and edge discrepancy, the quantities of interest for the present work.
We have the following result from \cite{KSVW} deducing a discrepancy property from uniform control on codegrees, proved earlier for the \er case in \cite{AKS}, and essentially going back to \cite{Thomason1}.
(While the result in \cite{KSVW} was stated for undirected graphs, the following version can be obtained by following similar lines to the proof given there.)

\vspace{0pt}

\begin{lemma}[Pseudorandomness \cite{KSVW}]			\label{lem_pseudo}
Let $\Gamma$ be a fixed element of $\mD_{n,d}$ with the property that for some $\eps>0$ and for every $i_1,i_2\in [n]$ distinct, 
\begin{equation}	\label{cobound}
\max\Big[\; \harp{\co}_\Gamma(i_1,i_2) \;,\; \garp{\co}_{\Gamma}(i_1,i_2)\;\Big] \le (1+\eps)\pe^2n.
\end{equation}
Then for any pair of sets $A,B\subset [n]$ such that $|A|,|B|\ge \frac{1}{\eps}\frac{n}{d} = (\eps p)^{-1}$, we have
\begin{equation}	\label{edisc1}
\left| \frac{e_\Gamma(A,B)}{\pe|A||B|} - 1\right| \le \left[\frac{2\eps n}{\max(|A|, |B|)} \right]^{1/2}.
\end{equation}

\end{lemma}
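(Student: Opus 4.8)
The plan is to estimate the second moment of $e_\Gamma(A,B)$ by summing the codegree bound over vertices of $A$, then apply a Cauchy--Schwarz / variance argument. Fix $A,B\subset[n]$ and write $d^-_B(j) = |\{i\in B : i\to j\}|$ for the in-degree of $j$ restricted to $B$, so that $e_\Gamma(A,B) = \sum_{j\in A} d^-_B(j)$ — wait, more precisely I would count edges from $A$ into $B$ by summing over the \emph{target} side: for each $j\in B$ let $r(j) := |\{i\in A : i\to j\}|$ be the number of in-neighbors of $j$ lying in $A$, so that $e_\Gamma(A,B) = \sum_{j\in B} r(j)$. Summing $r(j)$ over all $j\in[n]$ gives $\sum_{j\in[n]} r(j) = \sum_{i\in A} d = |A|d$ since each $i\in A$ has exactly $d$ out-neighbors. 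Next, $\sum_{j\in[n]} r(j)^2 = \sum_{j\in[n]} |\{(i_1,i_2)\in A^2 : i_1\to j,\ i_2\to j\}| = \sum_{i_1,i_2\in A} \garp{\co}_\Gamma(i_1,i_2)$, counting ordered pairs. Splitting off the diagonal $i_1=i_2$ (which contributes $\sum_{i\in A}\garp{\co}_\Gamma(i,i) = \sum_{i\in A} d = |A|d$) and bounding the $|A|^2 - |A|$ off-diagonal terms by $(1+\eps)\pe^2 n$ via \eqref{cobound}, we get
\begin{equation}
\sum_{j\in[n]} r(j)^2 \le |A|d + (|A|^2-|A|)(1+\eps)\pe^2 n \le |A|d + |A|^2(1+\eps)\pe^2 n.
\end{equation}

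Now I would run the Cauchy--Schwarz step on the set $B$ only. By Cauchy--Schwarz,
\begin{equation}
e_\Gamma(A,B) = \sum_{j\in B} r(j) \le |B|^{1/2}\Big(\sum_{j\in B} r(j)^2\Big)^{1/2} \le |B|^{1/2}\Big(\sum_{j\in[n]} r(j)^2\Big)^{1/2} \le |B|^{1/2}\big(|A|d + |A|^2\pe^2 n\big)^{1/2}\big(1+\eps\big)^{1/2},
\end{equation}
though to get the two-sided bound \eqref{edisc1} I actually want to control $\sum_{j\in B}(r(j) - \pe|A|)^2$ rather than $\sum r(j)^2$ directly. So the cleaner route: set $\bar r := \pe|A| = d|A|/n$, the ``expected'' value of $r(j)$, and compute
\begin{equation}
\sum_{j\in[n]} \big(r(j) - \bar r\big)^2 = \sum_{j\in[n]} r(j)^2 - 2\bar r\sum_{j\in[n]} r(j) + n\bar r^2 \le \big(|A|d + |A|^2(1+\eps)\pe^2 n\big) - 2\bar r |A| d + n\bar r^2.
\end{equation}
Since $\bar r = d|A|/n$ we have $2\bar r|A|d = 2|A|^2 d^2/n = 2|A|^2\pe^2 n$ and $n\bar r^2 = |A|^2 d^2/n = |A|^2\pe^2 n$, so the right-hand side telescopes to $|A|d + |A|^2(1+\eps)\pe^2 n - |A|^2\pe^2 n = |A|d + \eps |A|^2\pe^2 n$. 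Hence
\begin{equation}
\sum_{j\in B}\big(r(j)-\bar r\big)^2 \le \sum_{j\in[n]}\big(r(j)-\bar r\big)^2 \le |A|d + \eps|A|^2\pe^2 n = d|A|\big(1 + \eps \pe |A|\big).
\end{equation}
Now $|e_\Gamma(A,B) - \pe|A||B|| = |\sum_{j\in B}(r(j)-\bar r)| \le |B|^{1/2}\big(\sum_{j\in B}(r(j)-\bar r)^2\big)^{1/2} \le \big(|B|\, d|A|(1+\eps\pe|A|)\big)^{1/2}$ by Cauchy--Schwarz. Dividing by $\pe|A||B| = d|A||B|/n$ gives
\begin{equation}
\left|\frac{e_\Gamma(A,B)}{\pe|A||B|} - 1\right| \le \frac{\big(d|A||B|(1+\eps\pe|A|)\big)^{1/2}}{d|A||B|/n} = \left(\frac{n^2(1+\eps\pe|A|)}{d|A||B|}\right)^{1/2} = \left(\frac{n(1+\eps\pe|A|)}{\pe|A||B|}\right)^{1/2}.
\end{equation}
Under the hypothesis $|A|\ge(\eps\pe)^{-1}$ we have $\eps\pe|A|\ge 1$, so $1+\eps\pe|A| \le 2\eps\pe|A|$, and the bound becomes $(2\eps n/|B|)^{1/2}$. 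Running the same argument with the roles of $A$ and $B$ swapped (using $\harp{\co}$ in place of $\garp{\co}$) gives $(2\eps n/|A|)^{1/2}$, and taking the better of the two yields \eqref{edisc1}.

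The only real subtlety is the diagonal/bookkeeping: making sure the diagonal terms $\garp{\co}_\Gamma(i,i) = d$ are separated out correctly (they are \emph{not} bounded by $(1+\eps)\pe^2 n$ in the sparse regime, which is exactly why the hypothesis $|A|,|B|\ge(\eps\pe)^{-1}$ is needed — it guarantees the $d|A|$ term is absorbed by the $\eps|A|^2\pe^2 n$ term). Everything else is two applications of Cauchy--Schwarz and an exact cancellation of the ``mean'' terms; there is no genuine obstacle, and the argument is essentially the classical Thomason/Alon--Krivelevich--Sudakov discrepancy deduction transcribed to the directed setting, where one simply keeps track of in- versus out-codegrees.
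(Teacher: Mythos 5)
Your proof is correct and is the standard second-moment / Cauchy--Schwarz argument of Thomason and Alon--Krivelevich--Sudakov; the paper itself gives no proof of this lemma (it cites \cite{KSVW} and remarks that the directed version ``can be obtained by following similar lines''), so your derivation is essentially what the authors intend. One labeling slip: since $r(j)=|\{i\in A:i\to j\}|$, the double sum $\sum_{j\in[n]} r(j)^2$ counts ordered pairs $(i_1,i_2)\in A^2$ with a common \emph{out}-neighbor, i.e.\ it equals $\sum_{i_1,i_2\in A}\harp{\co}_\Gamma(i_1,i_2)$, not $\garp{\co}$; correspondingly, the $A\leftrightarrow B$ swap at the end uses $\garp{\co}$ rather than $\harp{\co}$. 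Because hypothesis \eqref{cobound} bounds both quantities by $(1+\eps)p^2n$, the estimate is unaffected, but the arrows should be exchanged for correctness. Everything else --- the diagonal extraction $\harp{\co}_\Gamma(i,i)=d$, the exact telescoping to $\sum_j(r(j)-\bar r)^2\le d|A|(1+\eps p|A|)$, the absorption of the $d|A|$ term via $|A|\ge(\eps p)^{-1}$, and the symmetrization producing the $\max(|A|,|B|)$ in the denominator --- is clean and checks out.
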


\begin{remark}

Note that in order to have concentration of $e_\Gamma(A,B)$ at the scale of the mean $\pe |A||B|$, the lemma requires that one of the sets be of size linear in $n$.
\Cref{thm_main} below will allow us to extend this to much smaller sets.

\end{remark}

\Cref{lem_pseudo} can be used to deduce control on edge discrepancy for random regular digraphs holding asymptotically almost surely (a.a.s.), as soon as one can show that \eqref{cobound} holds a.a.s. 
This was the route taken in \cite{KSVW} for the undirected case by Krievelevich, Sudakov, Vu and Wormald, 
who obtained the following concentration result for codegrees in sufficiently dense $d$-regular graphs. 

\begin{theorem}[From Theorem 2.1 in \cite{KSVW}]		\label{thm_ksvw}
Let $G$ be a uniform random $d$-regular undirected graph on $n$ vertices. 
Suppose that 
$$\omega(\sqrt{n\log n})\le d<n-cn/\log n$$
for some constant $c>2/3$. 
Then asymptotically almost surely we have
\begin{equation}		\label{ksvwbd}
\max_{i_1,i_2\in V} \left|\co_G(i_1,i_2) - \frac{d^2}{n}\right| < C\frac{d^3}{n^2} + 6 d\sqrt{\frac{\log n}{n}}
\end{equation}
for some $C>0$ absolute. If $d\ge cn/\log n$ we may take $C$ to be zero.

\end{theorem}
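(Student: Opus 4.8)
The plan is to argue by the method of \emph{switchings}, working directly on the set of $d$-regular graphs and exploiting its symmetries rather than lifting to a product model: since $d$ may be nearly as large as $n$, the probability $\pro{\event_{\simple}}$ that the configuration model outputs a simple graph is far too small to absorb (cf.\ \eqref{psimple}). Fix distinct $i_1,i_2\in[n]$ and, for each integer $k$, let $\mC_k$ denote the set of $d$-regular graphs $G$ on $[n]$ with $\co_G(i_1,i_2)=k$; since $|\mC_k|$ divided by the total number of $d$-regular graphs is exactly $\pro{\co_G(i_1,i_2)=k}$, it suffices to obtain sharp two-sided control on the consecutive ratios $|\mC_k|/|\mC_{k-1}|$ for $k$ near the expected value $\approx d^2/n$.

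The switching I would use takes $G\in\mC_k$, selects a common neighbor $j$ of $i_1,i_2$ together with an edge $\{a,b\}$ of $G$ with $b\notin N_G(i_1)\cup N_G(i_2)$ and $a\notin N_G(j)$ (plus the evident distinctness conditions), and replaces the edges $\{i_2,j\},\{a,b\}$ by $\{i_2,b\},\{a,j\}$; one checks that this preserves $d$-regularity, lowers $\co_G(i_1,i_2)$ by exactly $1$, and is reversible. Counting the forward switchings out of a fixed $G\in\mC_k$ and the reverse switchings into a fixed $G'\in\mC_{k-1}$, the factor counting the choice of $a$ (of size $\approx d-\co_G(a,j)\approx d(n-d)/n$) appears on both sides and cancels, leaving
\[
\frac{|\mC_k|}{|\mC_{k-1}|}=\frac{(d-k)^2}{k\,(n-2d+k)}\,\bigl(1+O(\eta_k)\bigr),
\]
where the relative error $\eta_k$ comes from the $O(1)$ adjustments in the counts and from deviations of the ambient codegrees $\co_G(a',b')$ at the vertices touched by the switching away from their typical size $\approx d^2/n$. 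The unperturbed ratio equals $1$ precisely at $k=d^2/n$, exceeds $1$ below this value and is smaller above it, and near $k=d^2/n$ has logarithmic derivative of order $n^3/\bigl(d^2(n-d)^2\bigr)$; this locates the mode at $d^2/n$ up to an error governed by $\eta_k$, which is the source of the slack term $Cd^3/n^2$ in the moderately dense regime.

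Telescoping these ratio estimates gives, for $k=k^\ast+t$ with $k^\ast$ the mode and $|t|$ not too large,
\[
\frac{|\mC_k|}{|\mC_{k^\ast}|}\le\expo{-\Omega\!\bigl(t^2 n/d^2\bigr)},
\]
a Bernstein-type tail reflecting the variance $\sigma^2\asymp d^2(n-d)^2/n^3\le d^2/n$, together with a crude sub-exponential truncation for $|t|$ large. Taking $|t|$ a suitable constant multiple of $d\sqrt{\log n/n}$ makes $\pro{\co_G(i_1,i_2)=k}$ smaller than, say, $n^{-10}$ — this window has size $\omega(\log n)$ under the hypothesis $d=\omega(\sqrt{n\log n})$, and so comfortably dwarfs the $O(1)$-type contributions to $\eta_k$ — and then summing over $t$ and union-bounding over the $\binom n2$ pairs $(i_1,i_2)$ yields \eqref{ksvwbd}. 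The explicit constant $6$, and the constant $c>2/3$ bounding how close $d$ may get to $n$, are calibrated precisely so that these exponents beat $2$ and so that the switching-count error estimates close.

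Finally, in the very dense regime $d\ge cn/\log n$ the relative errors above are no longer negligible, and there I would first pass to the complement $\bar G$, which is $(n-1-d)$-regular, via the inclusion--exclusion identity
\[
\co_G(i_1,i_2)=2d-n+\co_{\bar G}(i_1,i_2)+2\cdot\un\bigl[\{i_1,i_2\}\in E(\bar G)\bigr],
\]
and run the argument for the sparser of $G,\bar G$ (alternatively, in this range the dense asymptotic enumeration formulas for regular graphs are available and yield the conditional counts $|\mC_k|$ directly); this removes the $Cd^3/n^2$ slack, so one may take $C=0$. The main obstacle throughout is the switching error analysis: one must show that the forward and reverse counts agree to relative precision $o(\sqrt{\log n/n})$ uniformly over $k$ in a window of width $\asymp d\sqrt{\log n/n}$ around the mode, which requires a priori control on the ambient codegrees appearing in $\eta_k$ — obtained either by a bootstrap conditioning on a high-probability event, or by folding them into the error term — and it is exactly in pushing these estimates through that the hypotheses $d=\omega(\sqrt{n\log n})$ and $n-d>\tfrac23\cdot\tfrac n{\log n}$ get used.
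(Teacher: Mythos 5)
This theorem is not proved in the paper: it is imported verbatim from Krivelevich, Sudakov, Vu and Wormald \cite{KSVW}, and the text following the statement gives only a brief synopsis of the argument given there (asymptotic enumeration from \cite{McWo90} for the dense range $\min(d,n-d)\ge cn/\log n$, and the method of switchings for $d=o(n)$). So there is no proof in the paper against which to check your detailed error analysis, only that synopsis. At that level, your sketch is consistent with it: the ratio-of-counts switching you describe --- pick $j\in\Co_G(i_1,i_2)$ and an edge $\{a,b\}$ with $b\notin \mN_G(i_1)\cup\mN_G(i_2)$ and $a\notin\mN_G(j)$, swap to $\{i_2,b\},\{a,j\}$, read off $|\mC_k|/|\mC_{k-1}|\approx (d-k)^2/\bigl(k(n-2d+k)\bigr)$ with mode at $d^2/n$ --- is the standard KSVW-style codegree switching, and the Bernstein-type telescoping is the natural way to extract a tail from it. One concrete caveat: your primary device for the dense regime, passing to the complement, does not on its own handle the range where $d$ and $n-d$ are both $\Theta(n)$ (e.g.\ $d\asymp n/2$), since $\bar G$ is then again $\Theta(n)$-regular and the switching relative errors $\eta_k$ are not made small by complementation. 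It is the enumeration-formula route --- which you mention only parenthetically --- that \cite{KSVW} actually use there, and that is the route the paper's synopsis singles out for the dense case. Whether your slack term $Cd^3/n^2$, the explicit constant $6$, and the threshold $c>2/3$ genuinely close is a matter of checking against \cite{KSVW} itself, not against anything in this paper.
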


\begin{remark}
Theorem 2.1 in \cite{KSVW} also states some weaker upper bounds on codegrees valid for smaller $d$, which we have omitted. 
\end{remark}

The proof of \Cref{thm_ksvw} divides into two (overlapping) cases.
For $\min(d, n-d) \ge cn/\log n$ the proof uses an asymptotic enumeration formula for dense graphs with given degree sequence, proved in \cite{McWo90}. 
The method of switchings is used for the case $d=o(n)$. 
The proof shows that the estimate $o(1)$ for the probability that \eqref{ksvwbd} fails is in fact $O(n^{-c})$ for some $c>0$ absolute.

\subsection{Results}			\label{sec_results}

We combine variants of the switching method of McKay and Wormald with the method of exchangeable pairs for concentration of measure, as developed by Chatterjee in \cite{Chatterjee}, to prove exponential tail bounds on codegrees and edge discrepancies. 
For edge discrepancies we use the simple switching coupling, reviewed in \Cref{sec_switching}, while for concentration of codegrees we employ a novel (to our knowledge) ``reflection" coupling, described in \Cref{sec_reflection}.

For both $e_\Gamma(A,B)$ and $\harp{\co}_\Gamma(i_1,i_2)$ we are able to prove tail bounds that match (up to constant factors in the exponential) what can be obtained in the \er case using Chernoff bounds (specifically, Bernstein's inequality). 
As a consequence, we can combine our concentration estimates with union bounds to prove discrepancy properties essentially matching those available for \er digraphs. 
We review the (brief and completely standard) proofs of analogous results for the \er case in \Cref{sec_er} for comparison.

It is possible that our approach can be extended to prove similar results for undirected (non-bipartite) random regular graphs, but we do not pursue this matter here. 
It is also likely that our methods can be applied to the study of directed multi-graphs with given (non-constant) degree sequence.

Before stating our main theorem we set up some notation. 
Due to the constraint of $d$-regularity, a deviation of $e_\Gamma(A,B)$ from its mean coincides with an equal deviation of $e_\Gamma(A^c,B^c)$, where we denote $A^c:= [n]\setminus A$. 
Indeed, if $e_\Gamma(A,B)= k$, we have from $d$-regularity that
\begin{align*}
e_\Gamma(A^c,B) &= d|B|-k,\\
e_\Gamma(A,B^c) &=d|A| - k,\\
e_\Gamma(A^c,B^c) &= d(n-|A|-|B|)+k.
\end{align*}
It follows from the last line that for any $t\in \R$, the following identity of events holds:
\begin{equation}	\label{eventid}
\big\{e_\Gamma(A,B) -\mu(A,B) \ge t \big\} = \big\{ e_{\Gamma}(A^c,B^c) - \mu(A^c,B^c) \ge t \big\}.
\end{equation}
It is hence natural to consider deviations of $e_\Gamma(A,B)$ at the scale
\begin{align}
\hat{\mu}(A,B) &:= \min\big( \mu(A,B), \mu(A^c,B^c) \big) \\
&= \pe \min\big(|A||B|, (n-|A|)(n-|B|)\big).
\end{align}
We will often suppress the dependence of $\mu$ and $\hat{\mu}$ on $A,B$.
We also denote
\begin{equation}
\hat{d}:= \min(d,n-d)
\end{equation}
and $\hat{\pe}:= \hat{d}/n$, the minimum of the edge density of $\Gamma = ([n],E)$ and its complement $\Gamma'= ([n], [n]^2\setminus E)$.

Our main theorem can be summarized as follows:
\begin{enumerate}
\item With high probability, codegrees are uniformly close to $p^2n$.
\item Restricted to the (likely) event that all codegrees are roughly $p^2n$, we have concentration with exponential tails for the edge discrepancy at fixed pairs of sets $A,B$.\\
\end{enumerate}

\begin{theorem}[Main theorem]
\label{thm_main}

For $\eta\ge 0$ define the event
\begin{equation}
\good^{\co}(\eta) = \bigg\{ \forall \set{i_1,i_2}\subset[n], \;  \left|\harp{\co}_\Gamma(i_1,i_2) - p^2n\right| \le \eta p(1-p)n \bigg\}.
\end{equation}
We have
\begin{enumerate}
\item (Uniform control on codegrees) For any $\eta\ge 0$, $\good^{\co}(\eta)$ holds except with probability
\begin{equation}
O\left(n^2\hat{d}^2\expo{ - c \eta \min\big\{\hat{d}, \eta n\big\}}\right).
\end{equation}
In particular, for any $K_1>0$ there exists $K_2>0$ such that $\good^{\co}(\eta)$ holds with probability $1-O\big(n^{-K_1}\big)$ if 
\begin{equation}
\eta\ge K_2\max\left\{ \frac{\log n}{\min(d,n-d)}, \sqrt{\frac{\log n}{n}} \right\}.
\end{equation}
\item (Concentration of edge counts) For any $A,B\subset[n]$ and any $\tau\ge 0$, 
\begin{equation}		\label{edgeup}
\pro{  \Big\{e_\Gamma(A,B)-\mu\ge \tau \hat{\mu}   \Big\}   \wedge  \good^{\co}\big(\eta\big)  } 
\le \expo{-\frac{\tau^2\hat{\mu}}{C_1+C_2\tau}}
\end{equation}
provided $\eta \le \min\left(\frac{1}{4},\frac{\tau}{8}\right)$, and
\begin{equation}		\label{edgelow}
\pro{  \Big\{e_\Gamma(A,B)-\mu\le -\tau \hat{\mu}  \Big\} \wedge \good^{\co}\big(\eta\big) } 
\le \expo{-\frac{\tau^2\hat{\mu}}{C_1}}
\end{equation}
provided $\eta \le \frac{\tau}{4}$, where $C_1,C_2>0$ are absolute constants.
In particular, if  $\eta \le \min\left(\frac{1}{4},\frac{\tau}{8}\right)$, we have
\begin{equation}		\label{edge2side}
\pro{  \Big\{ \disc_\Gamma(A,B) \ge \tau \hat{\mu}(A,B)   \Big\}  \wedge \good^{\co}\big(\eta\big) } 
\le 2 \expo{-\frac{\tau^2}{C_1+C_2\tau}\hat{\mu}(A,B)}.\\
\end{equation}

\end{enumerate}

\end{theorem}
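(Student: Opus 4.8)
The plan is to apply Chatterjee's exchangeable-pairs machinery \cite{Chatterjee} twice: once to a single codegree, via the reflection involution of \Cref{sec_reflection}, and once to $e_\Gamma(A,B)$, via the simple-switching involution of \Cref{sec_switching}. In both cases the template is the same. For a statistic $S$ on $\mD_{n,d}$ with $\e S=m$, one pairs $\Gamma$ with $\Gamma':=\sigma(\Gamma)$, where $\sigma$ is the chosen measure-preserving involution (carrying auxiliary randomness), so that $(\Gamma,\Gamma')$ is exchangeable, and one produces an antisymmetric $F(\Gamma,\Gamma')$ with $\e[F(\Gamma,\Gamma')\mid\Gamma]=S(\Gamma)-m$. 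Chatterjee's theorem then reduces the tail estimate for $S-m$ to an almost-sure bound $\Delta(\Gamma):=\tfrac12\e\big[\,|S(\Gamma)-S(\Gamma')|\,|F(\Gamma,\Gamma')|\mid\Gamma\big]\le c_1 V+c_2\,(S(\Gamma)-m)_+$, which delivers a sub-Gaussian lower tail with proxy $\asymp V$ and a Bernstein-type upper tail $\exp(-c\,t^2/(V+t))$ --- precisely the shapes of \eqref{edgelow} and \eqref{edgeup}. I would lean on two structural facts of the $d$-regular model: by vertex-transitivity $\e e_\Gamma(A,B)=p|A||B|=\mu$ and $\e\harp{\co}_\Gamma(i_1,i_2)=\frac{d(d-1)}{n-1}=p^2n-p(1-p)\frac{n}{n-1}$ (so $\e\harp{\co}$ lies within $O(p(1-p))$ of $p^2n$, negligible against $\eta p(1-p)n$ whenever the claimed bound is nontrivial); and the complementation involution $\Gamma\mapsto\Gamma'$, under which $\harp{\co}_{\Gamma'}(i_1,i_2)=\harp{\co}_\Gamma(i_1,i_2)+(n-2d)$ and $e_{\Gamma'}(A^c,B^c)-\mu(A^c,B^c)=e_\Gamma(A,B)-\mu$ (cf.\ \eqref{eventid}), so one may always run the argument on the sparser side --- which is why the natural scales are $\hat d$ and $\hat\mu$, not $d$ and $\mu$.

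For part~(1), fix $\{i_1,i_2\}$ and take $S(\Gamma)=\harp{\co}_\Gamma(i_1,i_2)$ with the reflection coupling. Three things need work: (a) the reflection preserves the uniform measure on $\mD_{n,d}$ (hence exchangeability), which is routine once the involution is written down; (b) the identity $\e[F\mid\Gamma]=\harp{\co}_\Gamma(i_1,i_2)-\e\harp{\co}$ holds \emph{exactly} --- this is what the reflection is built for: averaging its increment over the auxiliary data reconstitutes the centred codegree on the nose, which a plain simple switching does not do, and it is checked by a direct count using vertex symmetry; and (c) the variance-proxy bound --- since a reflection changes $\harp{\co}_\Gamma(i_1,i_2)$ by $O(1)$ and $F$ scales like the reciprocal of the relevant eigenvalue, counting the available reflections (a count the reflection is designed to make depend only on $\harp{\co}_\Gamma(i_1,i_2)$ and the degrees, not on other codegrees) gives $\Delta(\Gamma)\lesssim \hat d^{\,2}/n+(\harp{\co}_\Gamma(i_1,i_2)-\e\harp{\co})_+$ for every $\Gamma$, with $\hat d^{\,2}/n$ the order of the codegree variance in $\mD_{n,d}$ (symmetric under $d\leftrightarrow n-d$, as the shift $n-2d$ demands). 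Feeding $t=\eta p(1-p)n\asymp\eta\hat d$ into the Bernstein bound gives, for one pair, $\exp(-c\,\eta\min\{\hat d,\eta n\})$ (the two regimes $\hat d\le\eta n$ and $\hat d>\eta n$ being where the $\min$ turns over); a union bound over the $\binom{n}{2}$ pairs, together with a harmless polynomial $O(\hat d^{\,2})$ from the construction, yields the stated probability, and optimizing $\eta$ against $K_1\log n+O(\log n)$ gives the ``in particular'' clause.

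For part~(2), take $S(\Gamma)=e_\Gamma(A,B)$, $m=\mu$, with the simple-switching coupling: sample $(i_1,i_2,j_1,j_2)$ uniformly and toggle the ``eye''/``jay'' configuration when present. Reversibility is immediate; the identity $\e[F\mid\Gamma]=e_\Gamma(A,B)-\mu$ rests on $e_\Gamma(A,B)$ being affine in the edge indicators (a switching changes it by a signed sum of exactly four of them) together with the fact that, on $\good^{\co}(\eta)$, the averaged effect of a random switching equals $-\lambda(e_\Gamma(A,B)-\mu)$ up to relative error $O(\eta)$ --- because the number of valid switchings deleting a fixed $A\times B$ edge is a sum of $(d-\harp{\co})$-terms, and of ones creating one, a sum of $(d-\garp{\co})$-terms, each equal to $(1\pm O(\eta))p(1-p)n$ on $\good^{\co}(\eta)$. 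Thus $e_\bullet(A,B)-\mu$ is an approximate eigenfunction of the switching kernel, one takes $F(\Gamma,\Gamma')=\lambda^{-1}(e_\Gamma(A,B)-e_{\Gamma'}(A,B))$, and --- a switching moving $e_\Gamma(A,B)$ by at most $2$ --- this gives $\Delta(\Gamma)\lesssim \mu+(e_\Gamma(A,B)-\mu)_+$; running the \emph{same} computation for $(A^c,B^c)$ and using that $e_\bullet(A,B)-\mu=e_\bullet(A^c,B^c)-\mu(A^c,B^c)$ identically improves $\mu$ to $\hat\mu$. The $O(\eta)$ errors are exactly what tie the admissible $\eta$ to $\tau$ (the hypotheses $\eta\le\tfrac14$, $\eta\le\tfrac\tau8$, $\eta\le\tfrac\tau4$). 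One technical wrinkle: Chatterjee's theorem wants these bounds (and the identity) unconditionally, not merely on $\good^{\co}(\eta)$, so I would run the switching chain \emph{censored} to $\good^{\co}(\eta)$ --- refusing any switch that would move a codegree outside the admissible window --- whose reversible measure is the uniform measure on $\good^{\co}(\eta)$, observe $\pro{\{e_\Gamma(A,B)-\mu\ge\tau\hat\mu\}\wedge\good^{\co}(\eta)}\le\pros[\good^{\co}(\eta)]{e_\Gamma(A,B)-\mu\ge\tau\hat\mu}$, and control the small shift from recentring at the conditional mean. Then \eqref{edgeup} and \eqref{edgelow} are the two Chatterjee tails and \eqref{edge2side} their union.

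The main obstacle is the reflection operation of part~(1), together with its exact conditional-expectation identity: simple switchings also perturb codegrees, but forcing their \emph{average} effect to reproduce precisely the centred codegree through a clean, well-normalized antisymmetric $F$ --- while keeping the variance-proxy count free of other codegrees, so that part~(1) can be proved unconditionally --- is delicate, and is what requires inventing a new involution. In part~(2) the comparable difficulty is the precise enumeration of the $e_\Gamma(A,B)$-changing switchings, the step that genuinely consumes the uniform codegree control and through which the $O(\eta)$ slack enters $C_1,C_2$, together with reconciling the censored chain with Chatterjee's unconditional hypotheses while keeping the constants absolute and the range of $\eta$ as stated.
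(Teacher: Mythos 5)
Your overall architecture matches the paper's: Chatterjee's exchangeable-pairs bound applied with the reflection involution for codegrees and simple switching for $e_\Gamma(A,B)$. However there are two substantive gaps.

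\textbf{Part (1).} You assert that the reflection coupling is ``built for'' making $\e[F(\Gamma,\tilde\Gamma)\mid\Gamma]=\harp{\co}_\Gamma(i_1,i_2)-\e\harp{\co}$ hold \emph{exactly}. This is false, and the discrepancy is the crux of the paper's argument. With $F(M,\tM)=n(\co(M)-\co(\tM))$ one gets
\[
f(M)=\e[F\mid M]=\big(\co(M)-p^2n\big)+\tfrac1n\,b(M),
\]
where $b(M)$ counts ``bad'' pairs $(j_1,j_2)\in\Ex_M(1,2)\times\Ex_M(2,1)$ for which the associated $\pm1$ walk fails to return to height $+1$ (i.e.\ conditions (1)--(2) of \Cref{def_reflection} hold but (3) fails). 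Since $b(M)\ge0$, the \emph{upper} tail comes for free: $\{\co(M)-p^2n\ge t\}\subset\{f(M)\ge t\}$, and the variance proxy bound $v_f(M)\le f(M)+\tfrac2n\hat d^{\,2}$ then gives \Cref{prop_codeg}. But your plan gives no way to obtain the \emph{lower} tail of $\co(M)$, because there $b(M)$ enters with the wrong sign and one must show $b(M)$ is small. The paper devotes \Cref{sec_codeg2} (\Cref{lem_gbound}) to exactly this: it decomposes $b(M)$ into slices $b_s(M)$, resamples certain submatrices so that each $b_s(\tM)\mid M$ is a sum of independent indicators, bounds each indicator probability by a Catalan-number argument that \emph{uses the already-proved upper tail} (to guarantee the relevant walk has $\gtrsim(1-p)d$ steps), and applies Bernstein. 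This bootstrap --- upper tail on codegrees $\Rightarrow$ bound on $b(M)$ $\Rightarrow$ lower tail on codegrees --- is entirely missing from your proposal, and without it the two-sided bound defining $\good^{\co}(\eta)$ does not follow. (Relatedly, the reflection's variance proxy \emph{does} involve other codegrees: your claim that the count ``depends only on $\co_\Gamma(i_1,i_2)$ and the degrees'' is precisely what fails when $(j_1,j_2)$ is near-parallel; the whole point of $\good_s$ in the paper's proof is to rule this out.)

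\textbf{Part (2).} Your ``censored chain'' device is not what the paper does, and it creates a problem you flag but do not resolve. The paper runs the \emph{unconditional} switching pair: $f(M)=\e[F\mid M]$ is an exact, unconditional identity, and the self-bounding estimate $v_f(M)\le n\hat d\,[\,f(M)+2n\hat d\,\mu\,]$ holds almost surely, so Chatterjee's theorem yields unconditional tails for $f(M)$. Only at the very last step does $\good^{\co}(\eta)$ enter, via the exact decomposition $f=f_1+f_2$ with $f_1(M)=p(1-p)n^2(e_M(A,B)-\mu)$ and $|f_2|\le\eta[f_1+2p(1-p)n^2\mu]$ on $\good^{\co}(\eta)$, which lets one rewrite $\{e_M(A,B)-\mu\ge\tau\mu\}\wedge\good^{\co}(\eta)$ as a sub-event of $\{f(M)\ge\tfrac\tau2 p(1-p)n^2\mu\}$. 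Your censored-chain version changes the stationary measure to the conditional law, so the centring point becomes $\e[e_\Gamma(A,B)\mid\good^{\co}(\eta)]$ rather than $\mu$, and controlling that shift with absolute constants and without circular reliance on part (2) itself is exactly the ``technical wrinkle'' you name and leave open. The paper's ordering of operations (Chatterjee first, restrict to $\good^{\co}$ after) makes this nonissue; you should adopt it.

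In short: the choice of couplings and the Bernstein/Chatterjee template are right, but (1) is incomplete without the bad-pair bound and the bootstrapped lower tail, and (2) should drop the censored chain in favour of applying Chatterjee unconditionally and restricting only in the event translation.
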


\begin{remark}
The proof shows that one may take 
$C_1=64$, $C_2=8$, though we make little effort to optimize these values.
\end{remark}

\begin{remark}\label{rmk_dgrows}
In order to deduce that $e_\Gamma(A,B)$ is within an arbitrarily small fixed multiplicative error of its mean $\mu=p|A||B|$ using \Cref{thm_main}, one must assume $\min(d,n-d)=\omega(\log n)$. Indeed, we want to take $\tau$ as small as we like in \eqref{edge2side}, which requires taking $\eta\le\tau/8$. Now to deduce that $\good^{\co}(\tau/8)$ holds a.a.s.\ from part (1), we must take $\hat{d}=\min(d,n-d) \ge C\tau^{-1}\log n$ for a sufficiently large constant $C$.
See \Cref{thm_edgeperm} below for a result which is valid for $d=O(\log n)$, but for a slightly different model of random regular digraph (the permutation model).
\end{remark}

\begin{remark}[Comparison to the \er case]
For $A, B$ such that $|A|+|B|\le n$ (i.e. such that $\mu(A,B)=\hat{\mu}(A,B)$), the bound \eqref{edge2side} is the same as what one obtains in the \er case from Bernstein's inequality, up to modification of the constants $C_1,C_2$ -- see \Cref{sec_er} and the bound \eqref{eredge}. 
For the case $|A|+|B|> n$ the bound \eqref{edge2side} becomes superior to \eqref{eredge}.
This is due to the identity \eqref{eventid} (which comes from $d$-regularity): if $A, B$ are of size close to $n$, a large deviation of $e_\Gamma(A,B)$ coincides with a \emph{very} large deviation of $e_\Gamma(A^c, B^c)$.
(Of course, the most concentrated statistic of all is $e_\Gamma(V,V)=dn$, which is deterministic, while this random variable has variance $p(1-p)n^2 \asymp n\min(d,n-d)$ in the \er model.)

\end{remark}

Our proof of both parts of \Cref{thm_main} is by the method of exchangeable pairs.
Roughly speaking, to prove concentration of a statistic $f(\Gamma)$ of the random digraph $\Gamma$, the method is to analyze the change in $f$ under a small random change to $\Gamma$.
To prove the concentration of edge counts in part (2) we will use the simple switching operation on digraphs, reviewed in 
\Cref{sec_switching}.
For the concentration of codegrees in part (1) we use an operation on digraphs which we call ``reflection". 
Reflections are less local in nature than simple switchings; the construction is given in \Cref{sec_reflection}.

\Cref{thm_main} can be viewed as an improvement on the deterministic \Cref{lem_pseudo} for the setting of random graphs. 
Like \Cref{lem_pseudo} it deduces some control on edge discrepancy after restricting to a ``good" event on which there is some uniform control on the codegrees. 
The key differences are the following:
\begin{enumerate}
\item Rather than deduce a deterministic bound on edge discrepancy from the control on codegrees as in \Cref{lem_pseudo} (which obtained an essentially optimal bound), \Cref{thm_main} gives much tighter bounds holding with high probability. 
\item The control on codegrees summarized in the event $\good^{\co}(\eta)$ differs in two respects: on the one hand we allow fluctuations at scale $\pe(1-\pe) n$ rather than $\pe^2n$, which is less stringent for sparse graphs, while on the other hand we need both lower and upper bounds. 
\end{enumerate}

One can deduce various discrepancy properties for $\Gamma$ holding with high probability using \Cref{thm_main}, and essentially matching standard discrepancy properties for \er digraphs (since our tail bounds match the bounds \eqref{erco} and \eqref{eredge} for \er digraphs up to constants in the exponential). 
There is flexibility with the range of sets to consider and the tolerance level for edge discrepancy; the choice will be dictated by the application at hand.
We now give one example. 

Suppose one desires to have $e_\Gamma(A,B)$ within a small factor of its expectation $p|A||B|$ for all pairs of sufficiently large sets $A,B$.
The following corollary shows that this is satisfied with high probability.

\begin{corollary}
\label{cor_discrep}
Let $C_0>0$ be a sufficiently large absolute constant.
For $\eps\in (0,1)$, let $\good(\eps)$ denote the event that for all $A,B\subset [n]$ such that
\begin{equation}
|A|,|B| \ge \frac{C_0\log n}{\eps^2 p}
\end{equation}
we have $\disc_\Gamma(A,B) \le \eps \hat{\mu}(A,B)$.
If $\hat{d}=\min(d,n-d) \ge C_0\eps^{-1}\log n$, then $\good(\eps)$ holds except with probability
\begin{equation}
O\left(\expo{ -c\min\left\{\eps \hat{d}, \eps^2 n,  \frac{n}{\eps^2 d} \log^2n \right\} }\right).
\end{equation}

\end{corollary}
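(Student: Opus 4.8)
The plan is to deduce \Cref{cor_discrep} from \Cref{thm_main} by a straightforward union bound over all pairs of sets, after first choosing the parameter $\eta$ appropriately. Set $\eps\in(0,1)$ and take $\eta := \eps/8$ in \Cref{thm_main}(2), which is permissible as long as we restrict attention to the discrepancy tolerance $\tau = \eps$, since then $\eta = \tau/8 \le \min(1/4, \tau/8)$. First I would invoke part (1): with $\hat d \ge C_0 \eps^{-1}\log n$ and $C_0$ large, the hypothesis $\eta \ge K_2\max\{\log n/\hat d, \sqrt{\log n/n}\}$ is satisfied (the first term is $\le \eps/(8 K_2)$... wait, we need the reverse) — more carefully, I would apply the first displayed bound in part (1) directly: $\good^{\co}(\eta)$ fails with probability $O(n^2\hat d^2 \exp(-c\eta\min\{\hat d, \eta n\}))$, and with $\eta = \eps/8$ this is $O(n^2 \hat d^2 \exp(-c'\min\{\eps\hat d, \eps^2 n\}))$, which is absorbed into the claimed error term (the $n^2\hat d^2$ polynomial factor is dominated once $\eps \hat d \gtrsim \log n$, i.e. by our hypothesis on $\hat d$, possibly after enlarging $C_0$).

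Next I would handle the edge discrepancy on the event $\good^{\co}(\eta)$. For each fixed pair $(A,B)$ with $|A|,|B| \ge C_0\eps^{-2} p^{-1}\log n$, apply \eqref{edge2side} with $\tau = \eps$ and $\eta = \eps/8$ to get
\begin{equation}
\pro{\{\disc_\Gamma(A,B) \ge \eps\hat\mu(A,B)\} \wedge \good^{\co}(\eps/8)} \le 2\exp\!\left(-\frac{\eps^2}{C_1 + C_2\eps}\hat\mu(A,B)\right) \le 2\exp\!\left(-c\eps^2\hat\mu(A,B)\right),
\end{equation}
using $\eps < 1$ to absorb $C_2\eps$ into the constant. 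Now I would lower-bound $\hat\mu(A,B) = p\min(|A||B|, (n-|A|)(n-|B|))$. The delicate point is that $\hat\mu$ can be small even when $|A|,|B|$ are large, namely when one of $A, B$ is nearly all of $[n]$: if $|A| = n$ then $\hat\mu = 0$. However, by the event identity \eqref{eventid} and the definition of $\hat\mu$ as a minimum, the discrepancy at $(A,B)$ equals that at $(A^c, B^c)$, so WLOG we may assume $|A| + |B| \le n$, whence $\hat\mu(A,B) = \mu(A,B) = p|A||B|$. (Pairs with $|A|+|B| > n$ are covered by their complements, and note $|A^c|, |B^c|$ need not both be large — but $\disc$ at such a pair is controlled by the pair $(A,B)$ itself via the case $|A|+|B|\le n$ applied to... hmm, this needs care: if $|A|+|B| > n$ but $|A|,|B|$ are both large, we use $\hat\mu(A,B) = \mu(A^c,B^c) = p|A^c||B^c|$ which could be tiny; yet \eqref{edge2side} still gives the bound with this small $\hat\mu$, and we simply note the claimed conclusion $\disc \le \eps\hat\mu$ is what \eqref{edge2side} directly yields — so no reduction is actually needed, one just applies \eqref{edge2side} verbatim to every qualifying pair.)

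For the union bound, I would split by the smaller of the two set sizes. Let $s = \min(|A|,|B|) \ge C_0\eps^{-2}p^{-1}\log n =: s_0$. The number of pairs $(A,B)$ with $\min(|A|,|B|) = s$ and both sizes $\le n$... is at most $\binom{n}{s}\cdot 2^n \le 2^{2n}$ crudely, but that is too lossy; better, fix which of $A,B$ is smaller and sum: roughly $\sum_{a \ge s_0}\sum_{b\ge s_0}\binom{n}{a}\binom{n}{b}$, and I would bound the tail contribution using $\binom{n}{a}\binom{n}{b} \le \exp(a\log(en/a) + b\log(en/b))$ against $\exp(-c\eps^2 p\, a b)$. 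Since $ab \ge s_0 \max(a,b) \gtrsim \eps^{-2}p^{-1}\log n \cdot \max(a,b)$, we get $c\eps^2 p\, ab \ge c'\log n\cdot\max(a,b) \ge c'(a+b)\log n /2$, which dominates $a\log(en/a)+b\log(en/b) \le (a+b)\log n$ provided $c'$ is large — i.e. provided $C_0$ is large. This makes each term $\exp(-(a+b))$-summable, and the total is $O(\exp(-c s_0)) = O(\exp(-c'' \eps^{-2}p^{-1}\log n)) = O(\exp(-c''\eps^{-2}(n/d)\log^2 n))$, matching the third term in the corollary's error bound. The main obstacle is getting this entropy-versus-tail tradeoff to close with the stated threshold $C_0\log n/(\eps^2 p)$ exactly — one must track constants carefully and choose $C_0$ large enough that $c\eps^2 p \cdot s_0 \ge 2\log n$ with room to spare, so that the binomial coefficients are defeated; everything else is bookkeeping, combining the three failure sources (codegree event, and the two regimes of the union-bound tail) into $O(\exp(-c\min\{\eps\hat d, \eps^2 n, \eps^{-2}(n/d)\log^2 n\}))$.
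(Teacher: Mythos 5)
Your proposal follows the same route as the paper's proof: invoke part (1) of Theorem~\ref{thm_main} to dispose of the codegree event, then union-bound part (2) over pairs $(A,B)$, using the size threshold $a_0 = C_0\log n/(\eps^2 p)$ to defeat the binomial entropy, and finally sum the geometric tail. Your choice $\eta = \eps/8$ is in fact the correct one to satisfy the constraint $\eta \le \tau/8$ in \eqref{edge2side}; the paper writes $\good^{\co}(\eps)$, which is slightly loose on this point but immaterial since the constant in the codegree failure probability is unspecified.

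However, your mid-paragraph second-guessing about the complementation reduction introduces a real error, and you should discard it and keep your initial WLOG reduction. You write that ``no reduction is actually needed, one just applies \eqref{edge2side} verbatim to every qualifying pair'' --- but if you do this, the tail bound for a pair with $|A|+|B| > n$ is $\exp(-c\eps^2\hat\mu(A,B))$ with $\hat\mu(A,B) = p|A^c||B^c|$, which can be $O(p)$ (e.g.\ $|A^c|=|B^c|=1$) no matter how large $|A|,|B|$ are, so the bound is useless and does not combine with $\binom{n}{a}\binom{n}{b}$ in your sum. Your union-bound display $\sum_{a,b\ge s_0}\binom{n}{a}\binom{n}{b}\exp(-c\eps^2 p\,ab)$ silently assumes $\hat\mu = p\,ab$, which is only true when $a+b\le n$. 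The correct move is exactly what you first wrote and what the paper does: use the identity $\{\disc_\Gamma(A,B)\ge \eps\hat\mu(A,B)\} = \{\disc_\Gamma(A^c,B^c)\ge\eps\hat\mu(A^c,B^c)\}$ to reduce to pairs with $a+b\le n$, where $\hat\mu = p\,ab$, and only then union-bound. (Strictly speaking this covers pairs with $a_0 \le |A|,|B| \le n - a_0$, since the complement of a pair with $|A|+|B|>n$ need not satisfy the size floor --- the paper shares this implicit restriction; it does not affect the argument.) With that one clarification, the rest of your entropy-versus-tail bookkeeping and the final assembly of the three error sources are sound and match the paper.
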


\begin{proof}
By the lower bound on $\hat{d}$ and part (1) of \Cref{thm_main} we have
$$\pro{\good^{\co}(\eps)} \ge 1 - \expo{ -c\min\big\{\eps \hat{d}, \eps^2 n\big\} } $$
(taking $C_0$ sufficiently large to beat the polynomial factors).
By abuse of notation we restrict the sample space to $\good^{\co}(\eps)$.
It now suffices to show
\begin{equation}	\label{sts}
\pro{\good(\eps)} \ge 1 
- C\expo{ -c \frac{n\log^2n}{ \eps^2d} }.
\end{equation}
Since 
$$\big\{\disc_M(A,B) \ge \eps \hat{\mu}\big\} = \big\{\disc_M(A^c,B^c) \ge \eps \hat{\mu}\big\} $$
it suffices to consider pairs $(A,B)$ with $|A|+|B|\le n$.
By giving up a factor of 2 we may also assume $|A|\le |B|$. 

Set
\begin{equation}	\label{azero}
a_0= \frac{C_0\log n}{\eps^2p} .
\end{equation}
For $a_0\le a\le b\le n$ let
$$\bad_{ab}(\eps) = \Big\{ \exists A, B\subset[n]: \; |A|=a, |B|=b, \; \disc_M(A,B) \ge \eps\hat{\mu}(A,B)\Big\}.$$
Applying part (2) of \Cref{thm_main} and a union bound (and by our restriction to $\good^{\co}(\eps)$), 
\begin{align}
\pro{\bad_{ab}(\eps)} & \ll {n\choose a}{n\choose b} \expo{-c\eps^2 pab} \label{third1}\\
& \ll \expo{ Cb\log n - c\eps^2 p ab}		\\
&\ll \expo{ -c\eps^2 pab}		\label{fourth1}
\end{align}
where in the last line we used that $a\ge a_0$ and took $C_0$ sufficiently large (adjusting the constant $c$). 
By another union bound, 
\begin{align*}
\pro{ \good^e(\eps)^c} & \ll \sum_{b=a_0}^n \sum_{a=a_0}^b \pro{ \bad_{ab}(\eps)}\\
& \ll \sum_{b=a_0}^n \sum_{a=a_0}^b \expo{ -c\eps^2 pab} \\
&\ll \expo{ -c\eps^2 p a_0^2}
\end{align*}
where in the last line we performed the geometric sums. 
Substituting the expression \eqref{azero} completes the proof.
\end{proof}

\begin{remark}
Note that in going from \eqref{third1} to \eqref{fourth1} we actually only needed 
$$a\ge \frac{C_0}{\eps^2}\log \frac{en}{b}.$$
Hence, we could have taken the wider class
\begin{equation}
\mF(\eps) = \set{ (A,B): \; A, B\subset[n], \; \min\big(|A|,\,|B|\big) \ge \frac{C_0}{\eps^2}\frac{n}{d}\log\frac{en}{\max(|A|,|B|)}}
\end{equation}
which includes some pairs $(A,B)$ where, say, $|A|\asymp1/p $ and $|B|\asymp n$. 
\end{remark}

Next we state a conjecture concerning the singular value distribution for the adjacency matrix of $\Gamma$, which we denote by $M=M_\Gamma$.
Denote the singular values of $M$ by
$$d=\sigma_1(\Gamma) \ge \sigma_2(\Gamma) \ge \cdots \ge \sigma_n(\Gamma)\ge 0$$
(where $\sigma_1(\Gamma)=d$ follows from $d$-regularity and the Cauchy-Schwarz inequality). 
It is well known that control on edge discrepancy follows from a spectral gap.
We recall the notion of a \emph{jumbled} graph, introduced by Thomason \cite{Thomason1} and adapted here to the setting of digraphs.
\begin{definition}		\label{def_jumbled}
Say that a digraph $D=(V,E)$ is \emph{$\alpha$-jumbled} if for all $A,B\subset [n]$ we have
$$\disc_D(A,B):= \big|e_D(A,B) - \mu(A,B) \big| \le \alpha \sqrt{|A||B|}.$$
\end{definition}
It is a straightforward exercise to show that a $d$-regular digraph on $n$ vertices whose adjacency matrix has second singular value $\sigma_2$ is $\sigma_2$-jumbled (see for instance Theorem 2.11 in \cite{KrSu} for the undirected case; the directed case follows similar lines).

\begin{conjecture}
\label{conj_sig2}
Assume $1\le d\le n$. Then asymptotically almost surely, $\sigma_2(\Gamma) =O\big(\sqrt{d}\big).$
In particular, $\Gamma$ is $O(\sqrt{d})$-jumbled.
\end{conjecture}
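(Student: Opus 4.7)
The plan is to deduce the conjectured spectral bound from the edge discrepancy estimates of \Cref{thm_main} via a jumbledness-to-spectral-gap argument. Let $M$ denote the adjacency matrix of $\Gamma$. Since $M\mathbf{1}=d\mathbf{1}$ and $M^\tran\mathbf{1}=d\mathbf{1}$, the top singular value $\sigma_1=d$ is realized on $\Span(\mathbf{1})$, and $\sigma_2(\Gamma)$ is exactly the operator norm of the centered matrix $M_0:=M-(d/n)\mathbf{1}\mathbf{1}^\tran$. It therefore suffices to prove $\|M_0\|=O(\sqrt{d})$ a.a.s.

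First, I would establish a strong jumbledness property of $\Gamma$: with high probability, $\disc_\Gamma(A,B)\le C\sqrt{d\,|A|\,|B|}$ for all $A,B\subset[n]$ (assuming $|A|+|B|\le n$; the complementary case reduces to this via the identity \eqref{eventid}). Applying \Cref{thm_main}(2) to a fixed pair of sizes $(a,b)$ with deviation threshold $\tau\hat\mu=C\sqrt{dab}$, one computes $\tau^2\hat\mu\asymp n$, so the resulting tail bound is $\exp(-cn)$ in the regime $ab\gtrsim n^2/d$ (where $\tau\lesssim 1$), easily dominating the $\binom{n}{a}\binom{n}{b}$ union bound; in the Bernstein-dominant regime ($\tau\gtrsim 1$) the tail is $\exp(-c\sqrt{dab})$, which beats the union bound whenever $\sqrt{dab}\gg (a+b)\log n$. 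Given such jumbledness, I would then bound $\|M_0\|$ by a dyadic chaining argument: for unit vectors $u,v\perp\mathbf{1}$, decompose them according to the magnitude of their coordinates and apply the jumbledness bound to each pair of (signed) level sets. The resulting double geometric series telescopes to $|\langle u,M_0v\rangle|\le C\sqrt{d}$ with no logarithmic loss, provided the jumbledness bound is available at all dyadic scales.

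The main obstacle is establishing jumbledness for small or unbalanced sets -- pairs $(A,B)$ with $\min(|A|,|B|)\ll n/d$, or with $\sqrt{dab}$ comparable to $(a+b)\log n$. In these regimes \Cref{thm_main}(2) is too weak to overcome the relevant union bound, and the dyadic chaining strategy correspondingly breaks down at the smallest scales. An alternative plan for this regime is the trace method: bound $\sigma_2(\Gamma)^{2k}\le \tr\!\big((M_0M_0^\tran)^k\big)$ for $k\asymp\log n$, expand the trace as a sum over closed walks in $[n]$, and compute the moments of each walk using the switching and reflection exchangeable-pairs couplings developed in this paper together with the uniform codegree control from \Cref{thm_main}(1). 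This is analogous to the trace-method proofs of Friedman and Bordenave in the undirected setting; transporting those techniques to the non-reversible directed regime, and coupling them with the switching-based machinery here, is expected to be the most technically demanding part, particularly in the sparse regime $d=O(\log^c n)$ where the chaining approach offers no help at all.
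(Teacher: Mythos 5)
This statement is an open \emph{conjecture} in the paper, not a theorem --- the paper does not prove it, and in fact explicitly notes that its methods only yield $O(\sqrt{d})$-jumbledness in the dense range $d\gg n$ (say $d\asymp n$) and degrade to $O(\sqrt{d\log n})$-jumbledness for sparser $d$. So there is no ``paper proof'' to compare against; your proposal is being evaluated as an attempt at an open problem.

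The central gap in your plan is the claimed chaining step: that $O(\sqrt{d})$-jumbledness for all pairs $(A,B)$ (even at every dyadic scale) telescopes to $\|M_0\|=O(\sqrt{d})$ ``with no logarithmic loss.'' This is false. Decomposing $u,v$ into dyadic level sets $A_i,B_j$ and summing $\sqrt{d|A_i||B_j|}\cdot 2^{-i-j}$ with the constraint $\sum_i 4^{-i}|A_i|\le 1$ gives, after Cauchy--Schwarz, a bound of order $\sqrt{d}\cdot L$ where $L\asymp\log n$ is the number of scales. More fundamentally, Bilu and Linial proved that this loss is essentially unavoidable: an $\alpha$-jumbled $d$-regular graph can have $\lambda_2\gg\alpha\log(d/\alpha)$, so no purely deterministic inference from jumbledness will close the gap. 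The Kahn--Szemer\'edi argument avoids this precisely because it does \emph{not} proceed via a deterministic jumbledness property --- it nets the sphere, splits the bilinear form $u^\tran M_0 v$ into ``light'' and ``heavy'' couples of coordinates, and for each fixed net pair establishes concentration of the light part directly (with Bernstein-type tails that beat the net cardinality), reserving discrepancy only for a structured heavy part where the combinatorics works out more favorably than a raw union bound over all $(A,B)$. Your proposal would need to reproduce that finer probabilistic structure, and the same obstruction you already identified for the jumbledness union bound --- small or lopsided sets, and the sparse regime $d=O(\log^c n)$ --- is exactly where the light/heavy split must be engineered carefully. Your trace-method fallback is the right kind of idea (it is how Friedman and Bordenave handle the sparse undirected case), but adapting it to the non-reversible directed model with exchangeable-pair moment control is itself a substantial open project and not a step that can be filled in; the conjecture remains open.
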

The singular vector corresponding to $\sigma_1(\Gamma)=d$ is the constant vector $\frac{1}{\sqrt{n}}\ones:=\frac{1}{\sqrt{n}}(1,\dots,1)$. 
By the Courant-Fischer minimax theorem, letting $\mS^n_0$ denote the set of unit vectors orthogonal to $\ones$, we have
\begin{align*}
\sigma_2(\Gamma) &= \sup_{u\in \mS^n_0}\|M_\Gamma u\|\\
& = \sup_{u\in \mS^n_0} \| (\ones\ones^\tran -M_{\Gamma})u\|\\
& = \sup_{u\in \mS^n_0}\|M_{\Gamma'}u\|\\
&=\sigma_2(\Gamma')
\end{align*}
where $\Gamma'$ is the complementary $(n-d)$-regular digraph.
Hence, it suffice to consider $1\le d\le n/2$.
Using \Cref{thm_main} and union bounds, one can show that $\Gamma$ is $O(\sqrt{d})$-jumbled for the dense case $n\ll d\le n/2$, following similar lines to the proof of \Cref{cor_discrep}.
For the sparse case, this approach can only show that $\Gamma$ is $O(\sqrt{d\log n})$-jumbled.

\Cref{conj_sig2} parallels a conjecture of Vu for the undirected case \cite{Vu:discrete}. 
For an undirected graph $G$ with adjacency matrix $M_G$ having real eigenvalues $\lambda_1(G)\ge \cdots\ge \lambda_n(G)$
we simply have 
$$\sigma_2(G)= \lambda(G):=\max\big\{ |\lambda_2(G)|,\, |\lambda_n(G)|\big\}.$$
In \cite{KrSu}, Kahn and Szemer\'edi proved a bound of $O(\sqrt{d})$ for $\lambda(\Pi)$, with $d$ fixed independent of $n$, and with the graph $\Pi$ drawn from a different distribution on random regular graphs which we call the \emph{permutation model}.
Let $P_1,\dots,P_d$ be iid uniform $n\times n$ permutation matrices, and put 
\begin{equation}	\label{Mper}
M_{\Lambda} = P_1 +\cdots +P_d.
\end{equation}
We may interpret $M_{\Lambda}$ as the adjacency matrix for a random $d$-regular directed multi-graph $\Lambda$, and we may also associate $M_{\Lambda}+M_{\Lambda}^\tran$ to a $2d$-regular undirected multi-graph $\Pi$.
Kahn and Szemer\'edi proved that if $d$ is fixed independent of $n$, we have
\begin{equation}	\label{KSsig}
\sigma_2(\Lambda) = O(\sqrt{d})
\end{equation}
asymptotically almost surely. 
By the triangle inequality this implies $\lambda(\Pi)=O(\sqrt{d})$ a.a.s. 
Their argument was later extended to allow $d=o(\sqrt{n})$ in \cite{DJPP}, and was also adapted to the configuration model with $d=o(\sqrt{n})$ in \cite{BFSU}.
Furthermore, the optimal bound $\lambda(\Pi)\le 2\sqrt{2d-1}+o(1)$ was obtained for fixed $d$ by Friedman in \cite{Friedman} by a completely different argument.

For small degree, the permutation model $\Lambda$ is ``close" to the uniform model $\Gamma$ in the following precise sense. 
It was proved in \cite{Janson:contiguity} and \cite{MRRW} that if $d$ is fixed, the models
\begin{enumerate}
\item $\Gamma$ (a uniform random element of $\mD_{n,d}$), and
\item $\Lambda$ conditioned to be simple
\end{enumerate}
are \emph{contiguous}, meaning that a sequence of events holding a.a.s.\ for one model will hold a.a.s.\ for the other.
In particular, for the case that $d$ is fixed \Cref{conj_sig2} follows from contiguity and the bound \eqref{KSsig}.
It was also shown that the model $\Pi$ is contiguous to a uniform random regular graph of fixed even degree.
We believe that these models continue to be contiguous if $d=O(\log n)$, though we are not aware of any such results in the literature. 

We record an analogue of our main theorem for the permutation model $\Lambda$.
The following result has no restrictions on $d$ and hence can serve as a substitute for \eqref{edge2side} for sparser regular digraphs (recall that \Cref{thm_main} is most useful when $\min(d,n-d)=\omega(\log n)$ -- see \Cref{rmk_dgrows}).

\begin{theorem}[Concentration of edge counts, permutation model]		\label{thm_edgeperm}
Let $n,d\ge 1$, and $A,B\subset[n]$. 
For any $\tau\ge 0$ we have
\begin{equation}
\pr\Big( \big| e_{\Lambda}(A,B) - \mu \big| \ge \tau \mu \Big) \le 
2\expo{ - \frac{\tau^2\mu}{2+\tau}}
\end{equation}
where $\mu=\pe|A||B|$ as before, and $e_\Lambda(A,B)$ is the number of directed edges from $A$ to $B$, counting multiplicity. 
\end{theorem}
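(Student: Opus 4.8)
The plan is to exploit the fact that in the permutation model the randomness is carried by $d$ \emph{independent} permutation matrices $P_1,\dots,P_d$, so that $e_\Lambda(A,B) = \sum_{k=1}^d X_k$ where $X_k := |E_k \cap (A\times B)|$ counts edges from $A$ to $B$ contributed by $P_k$ alone, i.e.\ $X_k = \#\{i \in A : P_k(i) \in B\}$ (here I think of $P_k$ as a random bijection $[n]\to[n]$). The $X_k$ are i.i.d., each bounded by $\min(|A|,|B|)$, with $\e X_k = |A|\,|B|/n = p|A||B| = \mu/d$, so $\e e_\Lambda(A,B) = \mu$. Thus the whole statement reduces to a Bernstein-type inequality for the sum of $d$ independent bounded random variables, \emph{provided} we can control each $X_k$ well enough. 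The natural obstacle is that $X_k$ is a sum of $|A|$ \emph{negatively} dependent indicators $\un\{P_k(i)\in B\}$ (sampling without replacement), so one cannot directly apply Chernoff to a single $X_k$ treating its terms as independent — but one does not need to: one applies the exponential moment bound at the level of the $d$ independent summands $X_k$.

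The key steps, in order. First I would set $Y_k := X_k - \mu/d$, so the $Y_k$ are i.i.d., mean zero, with $|Y_k| \le \min(|A|,|B|) =: m$ and $\var(Y_k) \le \e X_k^2$. Second, I would bound the variance: since $X_k$ is hypergeometric-like, $\var(X_k) \le \e X_k = \mu/d$ (the indicators $\un\{P_k(i)\in B\}$ are negatively associated, so $\var(X_k) \le \sum_i \var(\un\{P_k(i)\in B\}) \le \sum_i \pr(P_k(i)\in B) = \e X_k$), hence $\sum_{k=1}^d \var(Y_k) \le \mu$. Third, apply Bernstein's inequality for sums of i.i.d.\ bounded mean-zero variables:
\begin{equation}
\pr\Big( \big| \textstyle\sum_{k=1}^d Y_k \big| \ge t \Big) \le 2\expo{ - \frac{t^2/2}{\sum_k \var(Y_k) + m t/3} } \le 2\expo{ - \frac{t^2/2}{\mu + mt/3} }.
\end{equation}
Fourth, substitute $t = \tau\mu$ and note $m = \min(|A|,|B|) \le \sqrt{|A||B|}$, so $m\,t = \tau\mu\,m \le \tau\mu \sqrt{|A||B|}$; since also $\mu = p|A||B| \ge p\,\max(|A|,|B|)\cdot\min(|A|,|B|)$... actually more simply $m \le |A||B|/\max(|A|,|B|)$, and one checks $mt/3 \le \tau\mu/\,\cdot$ — the cleanest route is to use the crude bound $m \le \mu$ when $p\max(|A|,|B|)\ge 1$; in general one verifies that $\mu + mt/3 \le \mu(1+\tau)$ up to the harmless constants, giving an exponent of the form $-\tau^2\mu/(C(1+\tau))$ and then one tunes constants to land on $2+\tau$ in the denominator.

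The only genuinely delicate point is making the arithmetic in the fourth step honest without any hidden lower bound on $|A|,|B|$ or on $p$: one must be careful that $m$ can be as large as $|A|$ even when $\mu$ is small. The resolution is that Bernstein's inequality already handles this — in the regime where $mt$ dominates $\mu$, i.e.\ large deviations, the bound degrades gracefully to a Poisson-type tail $\expo{-c t \log(\cdot)}$-flavored estimate, but even the plain Bernstein exponent $-t^2/(2(\mu + mt/3))$ with $m\le\sqrt{|A||B|}$ and $t=\tau\mu$ yields $-\tau^2\mu^2/(2(\mu+\tau\mu\sqrt{|A||B|}/3)) = -\tau^2\mu/(2(1+\tau\sqrt{|A||B|}/3))$, and since $\mu = p\sqrt{|A||B|}\cdot\sqrt{|A||B|}$ we can absorb... — to get exactly $\tau^2\mu/(2+\tau)$ one wants $\sqrt{|A||B|}\le \mu$, equivalently $p\sqrt{|A||B|}\ge 1$; when this fails $\mu<1$ and the claimed bound is trivial (the right side exceeds $1$ for $\tau$ bounded, and for large $\tau$ one uses $e_\Lambda(A,B)\le dm$ deterministically). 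So I would split into the case $p\sqrt{|A||B|}\ge 1$, where the substitution above is clean, and the trivial case $p\sqrt{|A||B|}<1$. I expect the variance bound via negative association and the final constant-chasing to be the two places needing care, but neither is deep.
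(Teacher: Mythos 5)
Your decomposition $e_\Lambda(A,B)=\sum_{k=1}^d X_k$ with $X_k$ i.i.d.\ hypergeometric is correct, but the way you apply Bernstein to the $d$ summands $X_k$ does not yield the stated bound, and the problem is not ``constant-chasing'': it is structural. Bernstein's inequality for a sum of $d$ independent mean-zero variables $Y_k=X_k-\mu/d$ with $|Y_k|\le m=\min(|A|,|B|)$ and $\sum_k\var(Y_k)\le\mu$ gives, with $t=\tau\mu$,
\[
\pr\Bigl(\bigl|e_\Lambda(A,B)-\mu\bigr|\ge \tau\mu\Bigr)\le 2\exp\!\left(-\frac{\tau^2\mu^2/2}{\mu+m\tau\mu/3}\right)=2\exp\!\left(-\frac{\tau^2\mu}{2+\tfrac{2}{3}m\tau}\right).
\]
For this to match $2\exp(-\tau^2\mu/(2+\tau))$ you would need $\tfrac23 m\le 1$, i.e.\ $\min(|A|,|B|)\le 1$. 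In every nontrivial regime the extra factor of $m$ in the denominator kills the estimate; for example with $\tau$ of constant order and $m$ large, your exponent is $O(\tau^2\mu/(m\tau))=O(\tau\mu/m)$ rather than $\Theta(\tau\mu)$. Your attempted repair via $m\le\mu$ makes it worse, producing a denominator $2+\tfrac23\tau\mu$ that grows with $\mu$. The flaw is exactly the point you tried to side-step: you cannot afford to treat $X_k$ as an opaque bounded variable, because the range $m$ of $X_k$ is far larger than its standard deviation, and Bernstein's increment term is governed by the range.

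The repair is precisely to go inside $X_k$. Since $X_k$ is a sum from sampling without replacement, Hoeffding's reduction theorem gives $\e\,e^{\theta X_k}\le\e\,e^{\theta Z}$ where $Z\sim\bin(|A|,|B|/n)$; by independence of $P_1,\dots,P_d$ this yields $\e\,e^{\theta e_\Lambda(A,B)}\le\e\,e^{\theta Z'}$ with $Z'\sim\bin(d|A|,|B|/n)$, and the standard Bernstein bound for a binomial (increment bounded by $1$, variance $\le$ mean) gives $2\exp(-\tau^2\mu/(2+\tfrac23\tau))\le 2\exp(-\tau^2\mu/(2+\tau))$. (Equivalently one can invoke the Chernoff bound for negatively associated Bernoulli indicators across all $nd$ pairs $(i,k)$.) That route is sound and in fact elementary. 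It is quite different from the paper's proof, which never invokes Bernstein at all: there, one builds an exchangeable pair $(\pi,\tpi)$ by composing a random transposition $\uptau_{\{I_1,I_2\}}$ (with $I_1\in A$, $I_2\notin A$) with a uniformly chosen $\pi_J$, computes the conditional drift $f(\pi)=\e[F(\pi,\tpi)\mid\pi]=e_\pi(A,B)-dab/n$ and the quantity $v_f$, and applies Chatterjee's \Cref{thm_chatterjee}. That argument is chosen in the paper deliberately as a warm-up for the uniform-model proof in \Cref{sec_edge}, where no independence structure is available; the Hoeffding/negative-association shortcut works only for the permutation model.
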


The above theorem is considerably easier to establish than part (2) of \Cref{thm_main} -- it turns out that the independence between the $d$ factors allows one to proceed with the method of switchings without needing a priori bounds on codegrees. 
We will hence prove \Cref{thm_edgeperm} as a warmup in \Cref{sec_com}.

Sharper bounds for larger deviations (i.e. when $\tau$ is large) can be proved by directly estimating $\pr(e(A,B)=t)$ for all $t\in \N$, leading to an estimate on the moment generating function $m(\theta)=\e \exp(\theta\, e(A,B))$. 
This was the route taken in \cite{DJPP} to prove a certain discrepancy property for the permutation model.

\subsection{Extension to general bipartite regular graphs}		\label{sec_extend}

\Cref{thm_main} above easily extends to the following more general setting. 
For $m,n\ge 1$ and $d\in [n],$ $d'\in [m]$, draw $\Gamma = (U,V,E)$ uniformly from the set of bipartite graphs on parts $U$, $V$ with $|U|=m$, $|V|=n$ and edge set $E\subset U\times V$, with the constraint that each $i\in U$ has degree $d$ and each $j\in V$ has degree $d'$.
Since the total number of edges is
$$md= nd'$$
we denote
\begin{equation}
\theta= \frac{m}{n} = \frac{d'}{d}.
\end{equation}
The random regular digraph considered above corresponds to the case $\theta=1$. 
As before, we identify $U$ with $[m]$ and $V$ with $[n]$, denote by $p=d/n=d'/m$ the edge density of $\Gamma$, $\hat{d}:= \min(d,n-d)$, and $\hat{\mu}(A,B):= p\min\{|A||B|, (m-|A|) (n-|B|)\}$.

The following result is proved by the same lines as \Cref{thm_main}, only with slightly more burdensome notation.

\begin{theorem}[Extension to bipartite graphs]
\label{thm_bip}

For $\eta\ge 0$ define the event
\begin{equation}
\good^{\co}(\eta) = \bigg\{ \forall \set{i_1,i_2}\subset[m], \;  \left|\harp{\co}_\Gamma(i_1,i_2) - p^2n\right| \le \eta p(1-p)n \bigg\}.
\end{equation}
We have
\begin{enumerate}
\item (Uniform control on codegrees) For any $\eta\ge 0$, $\good^{\co}(\eta)$ holds except with probability
\begin{equation}		
O\bigg( m^2\hat{d}^2 \expo{-\frac{c\eta n^2}{m}} \bigg) 
+ O\bigg( m^2 \expo{ -c\eta \min\Big\{  \hat{d}, \eta n\Big\} } \bigg).
\end{equation}
In particular, $\good^{\co}(\eta)$ holds a.a.s.\ in the limit $m,n\rightarrow \infty$ as long as 
$$\eta\ge C\max\left\{\frac{m\log (m\hat{d})}{n^2}, \frac{ \log m}{\hat{d}}, \sqrt{\frac{\log m}{n}}\right\}$$
for some $C>0$ sufficiently large. 
\item (Concentration of edge counts) For any $A\subset[m]$, $B\subset[n]$ and any $\tau\ge 0$, 
if  $\eta \le \min\left(\frac{1}{4},\frac{\tau}{8}\right)$, we have
\begin{equation}		
\pro{  \Big\{ \disc_\Gamma(A,B) \ge \tau \hat{\mu}(A,B)   \Big\}  \wedge \good^{\co}\big(\eta\big) } 
\le 2 \expo{-\frac{\tau^2}{C_1+C_2\tau}\hat{\mu}(A,B)}.
\end{equation}

\end{enumerate}

\end{theorem}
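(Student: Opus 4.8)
The plan is to transcribe the proof of \Cref{thm_main} to the two-sided setting, carrying the vertex sets $U=[m]$, $V=[n]$ and both degrees $d$ and $d'=\theta d=pm$ through the combinatorics; nothing conceptually new is needed. Both parts are instances of Chatterjee's concentration inequality for exchangeable pairs from \cite{Chatterjee}: for the statistic $f(\Gamma)$ in question one builds an exchangeable pair $(\Gamma,\Gamma')$ in which $\Gamma'$ is obtained from $\Gamma$ by one randomly chosen move -- a reflection of a random row pair for part (1), a simple switching for part (2) -- shows that $f$ recentered at its target value satisfies $\e[f(\Gamma')-f(\Gamma)\mid\Gamma]\approx-\lambda f(\Gamma)$ for some $\lambda>0$, bounds the conditional variance proxy together with a linear-growth term, and reads off the tail. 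The complement identity
\begin{equation*}
e_\Gamma(A,B)-\mu(A,B)=e_\Gamma(A^c,B^c)-\mu(A^c,B^c),
\end{equation*}
which permits working at the reduced scale $\hat\mu(A,B)=p\min\{|A||B|,(m-|A|)(n-|B|)\}$, survives unchanged: it follows from $\sum_{i\in U}\degr i=\sum_{j\in V}\degr j=md=nd'$ together with $pmn=d'n$, exactly as in the derivation of \eqref{eventid}.

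For part (2) the simple switching acts on $i_1,i_2\in U$ and $j_1,j_2\in V$ by exchanging the two configurations in \Cref{sec_intro}; sampling the quadruple by the structured rule of \Cref{sec_switching} and repeating that computation, one checks that $e_\Gamma(A,B)$ moves by a bounded amount per step and that the number of switchings raising $e_\Gamma(A,B)$ minus the number lowering it is, up to an error controlled on $\good^{\co}(\eta)$, proportional to $-(e_\Gamma(A,B)-\mu)$; the hypothesis $\eta\le\min(\tfrac14,\tfrac\tau8)$ is exactly what makes that error subordinate to the mean-reverting drift. The resulting bounds on the variance proxy and the linear term are the same functions of $\mu,\hat\mu,\tau$ as in \Cref{thm_main} (with $d'$ in place of the second factor of $d$ wherever edges into $V$ are counted), so Chatterjee's inequality gives the stated sub-exponential tail with the same $C_1=64$, $C_2=8$ after the $\hat\mu$ reduction. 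No asymmetry enters here, since a switching touches only two vertices on each side.

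For part (1) the reflection attached to a pair $\{i_1,i_2\}\subset U$ is the natural bipartite analogue of the one in \Cref{sec_reflection}. Writing $a,b,b,e$ for the numbers of columns of $V$ adjacent (within $\Gamma$) to both of $i_1,i_2$, to $i_1$ only, to $i_2$ only, and to neither -- so that $a=\harp{\co}_\Gamma(i_1,i_2)$, $a+b=d$, $a+2b+e=n$ -- a reflection selects a column $j$ of the first type, a column $j'$ of the last type, and an auxiliary row $i_3\in U\setminus\{i_1,i_2\}$ with $i_3\rightarrow j'$ and $i_3\not\rightarrow j$, and performs the degree-preserving four-edge swap that lowers $a$ by one, together with its inverse move. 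The identity $ae-b^2=n\big(a-p^2n\big)$ (from $b=d-a$, $e=n+a-2d$ and $d^2/n=p^2n$) shows the move is mean-reverting toward $p^2n$; counting admissible moves of each type gives eigenvalue $\lambda\asymp p(1-p)/n$ and variance proxy $\asymp p^2(1-p)^2n$ at a typical $\Gamma$, and since each reflection changes $a$ by at most $1$, Chatterjee's inequality (with its linear-growth term) yields a per-pair tail of the form $\hat d^2\exp(-c\eta n^2/m)+\exp\!\big(-c\eta\min\{\hat d,\eta n\}\big)$. A union bound over the $\binom{m}{2}$ pairs, with polynomial prefactors absorbed by adjusting constants as in \Cref{thm_main}, then gives both the displayed failure probability and the companion statement in $\eta$.

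The one genuinely new point, and the main obstacle, is the error term in the approximate-eigenfunction identity for part (1). The number of admissible auxiliary rows $i_3$ equals $d'$ minus the number of common in-neighbours of $j$ and $j'$, i.e.\ a codegree on the $V$-side, which is not controlled a priori; bounding its fluctuation crudely -- using only the deterministic total $\sum_{j\ne j'}\garp{\co}_\Gamma(j,j')=md(d-1)$ and the trivial bound $\garp{\co}_\Gamma(j,j')\le d'$ -- is what forces the scale $n^2/m$ into the exponent (for $\theta=1$ it is subsumed by the $\eta n$ term). One must check this crude control keeps the eigenfunction error and the variance bound subordinate over the relevant range of $a$; everything else -- part (2), the $\hat\mu$ reduction, and the passage from per-pair tails to the a.a.s.\ conclusion -- is a direct transcription of \Cref{thm_main}.
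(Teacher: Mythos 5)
Your treatment of part (2) and of the complement identity $e_\Gamma(A,B)-\mu(A,B)=e_\Gamma(A^c,B^c)-\mu(A^c,B^c)$ is a faithful transcription of the square case, and that much is in line with the paper's remark that \Cref{thm_bip} follows \Cref{thm_main} ``by the same lines.'' The problem is part (1): what you describe as ``the natural bipartite analogue of the one in \Cref{sec_reflection}'' is not the paper's reflection operation at all. You pick a column $j$ adjacent to both $i_1,i_2$, a column $j'$ adjacent to neither, and an auxiliary row $i_3$ with $i_3\rightarrow j'$, $i_3\not\rightarrow j$, and do a four-edge swap. That is the simple-switching-through-a-third-row coupling, which the paper explicitly rejects in \Cref{sec_reflection}: ``While it is possible to alternate between the minors \eqref{K2} and \eqref{I2} using simple switchings involving entries from a third row, it turns out that when one tries to apply \Cref{thm_chatterjee} with this coupling some control on the quantities $\co_{M^\tran}(j_1,j_2)$ is needed, so that such an approach is circular.'' The actual reflection $\Psi_{(j_1,j_2)}$ involves no auxiliary row: it acts on an \emph{ordered pair of columns} $(j_1,j_2)\in\Ex_M(i_1,i_2)\times\Ex_M(i_2,i_1)$ and replaces the entire block $M_{[2,i^*]\times(j_1,j_2)}$ by its column-swap $M_{[2,i^*]\times(j_2,j_1)}$, where $i^*$ is the first-return time of the associated $\pm 1$ walk to height $+1$. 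Because the reflection resamples two whole columns rather than one third row, the drift calculation for $f(M)=n(\co(M)-\co(\tM))$ never sees $\co_{M^\tran}$ at all; that is precisely what lets the paper prove the upper-tail bound (\Cref{prop_codeg}) with no bootstrap, and only afterwards feeds that bound on column codegrees into the lower-tail argument (the $\good_s$ event in \Cref{lem_gbound}).

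Your proposed workaround for the circularity --- bounding the number of admissible $i_3$ using only $\sum_{j\ne j'}\garp{\co}_\Gamma(j,j')=md(d-1)$ together with $\garp{\co}_\Gamma(j,j')\le d'$ --- does not resolve the difficulty. \Cref{thm_chatterjee} requires a pointwise almost-sure bound $v_f(M)\le K_1+K_2 f(M)$, not a bound in expectation. The deterministic total controls the codegree averaged over all $\binom{n}{2}$ column pairs, but the pairs $(j,j')$ that enter the drift and variance proxy are restricted to the $\co_M(i_1,i_2)\cdot(n-2d+\co_M(i_1,i_2))$ pairs of types ``both'' and ``neither'' relative to the fixed rows $(i_1,i_2)$, and a worst-case $M$ can concentrate large column codegrees exactly on that set. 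Using the trivial bound $\garp{\co}_\Gamma(j,j')\le d'$ there gives no gain at all. You flag this gap yourself (``one must check this crude control keeps the eigenfunction error and the variance bound subordinate''), but this is not a detail to be checked; it is the obstacle the reflection coupling was designed to sidestep. To transcribe the paper's argument you should instead carry over \Cref{def_reflection}, \Cref{lem_reflection}, \Cref{prop_codeg}, and \Cref{lem_gbound} to the bipartite setting: the walk $w_{(j_1,j_2)}$ now runs over $i\in[m]$ and closes at $0$ by $d'$-regularity of columns, the identities $\ex_M(i_1,i_2)=d-\co_M(i_1,i_2)\le\hat d$ and $|\{j:M_{(i_1,i_2)\times j}=(0\;0)^\tran\}|=n-2d+\co_M(i_1,i_2)$ are unchanged, and the column-codegree control needed in $\good_s$ is supplied by the bipartite \Cref{prop_codeg} applied to $M^\tran$ (mean $p^2m$, scale $\hat p^2 m$); this $m$-dependence in the $V$-side upper tail is what produces the extra $\exp(-c\eta n^2/m)$ regime in the displayed failure probability.
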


\vspace{.3cm}

The rest of the paper is organized as follows. 
In \Cref{sec_com} we introduce and motivate Chatterjee's method of exchangeable pairs in the context of two random digraph models that are simpler to analyze than the uniform random regular digraph, namely the Erd\H os--R\'enyi model and the permutation model (as defined in \eqref{Mper}).
The proof of \Cref{thm_edgeperm} is given in \Cref{sec_perm}.
In \Cref{sec_couplings} we construct the switching and reflection couplings, which will be used to create exchangeable pairs of random regular digraphs.
In \Cref{sec_codeg1} we use the reflection coupling to prove an upper tail bound for the codegree of a fixed pair of vertices. 
For technical reasons the proof of the lower tail bounds requires more care, in particular using the control on the upper tail as input -- this is carried out in \Cref{sec_codeg2}, completing the proof of part (1) of \Cref{thm_main}.
The tail bounds for edge discrepancy in part (2) of \Cref{thm_main} are proved using the simple switching coupling in \Cref{sec_edge}.

\subsection{Notation}			\label{sec_notation}

We make use of the following asymptotic notation with respect to the limit $n=|V|\rightarrow \infty$.
$f\ll g$, $g\gg f$, $f=O(g)$, and $g=\Omega(f)$ are all synonymous to the statement that $|f|\le Cg$ for all $n\ge C$ for some absolute constant $C$. $f\asymp g$ and $f=\Theta(g)$ mean $f\ll g$ and $f\gg g$.
$f=o(g)$ and $g=\omega(f)$ mean that $f/g\rightarrow 0$ as $n$ tends to infinity. 
For a parameter $\alpha\in \R$, 
$f\ll_\alpha g$, $f=O_\alpha(g)$ etc. mean that $|f|\le C_\alpha g$ for all $n\ge C_\alpha$, with $C_\alpha$ a constant depending only on $\alpha$.
$C,c, c', c_1$, etc. denote absolute constants whose value may change from line to line.

Events will be denoted by the letters $\event, \bad,$ and $\good$, where the latter two denote ``bad" and ``good" events, respectively. 
Their meaning may vary from proof to proof, but will remain fixed for the duration of each proof. $\un_{\event}$ denotes the indicator random variable corresponding to the event $\event$, and for a statement $S$, $\un(S)=\un_{\set{\mbox{$S$ holds}}}$.
$\e_X$ and $\pr_X$ denote expectation and probability, respectively, conditional on all random variables but $X$.
We say that an event $\event$ depending on $n$ holds \emph{asymptotically almost surely} if $\pro{\event^c}=o(1)$.

It will be convenient to express codegrees and edge counts in terms of the adjacency matrix associated to $\Gamma$, which we denote by $M$. 
We also denote by $\mM_{n,d}$ the set of all adjacency matrices associated to the elements of $\mD_{n,d}$ (alternatively, this is the set of $n\times n$ matrices with entries in $\set{0,1}$, subject to the constraint that each row and column contains exactly $d$ 1s).
Hence $M$ is a uniform random element of $\mM_{n,d}$, and we refer to it as an \emph{rrd matrix} (for ``random regular digraph").

We identify $V$ with $[n]$ and index the rows and columns of $M$ by $i$ and $j$, respectively. 
By abuse of notation we refer to $i, j$ as ``vertices". 
Given ordered tuples of row and column indices $(i_1,\dots, i_a)$ and $(j_1,\dots, j_b)$, we denote by $M_{(i_1,\dots, i_a)\times(j_1,\dots, j_b)}$ the $a\times b$ matrix with $(k,l)$ entry equal to the $(i_k,j_l)$ entry of $M$. 
(Note for instance that the sequence $(i_1,\dots,i_a)$ need not be increasing.)

For $i\in [n]$, let
\begin{equation}
\mN_M(i) = \set{j\in [n]: M(i,j)=1}
\end{equation}
so that $\mN_M(i)$ and $\mN_{M^\tran}(i)$ are the out- and in-neighborhoods of the vertex $i$, respectively. 
For the neighborhood of a pair of distinct vertices $i_1,i_2\in [n]$, denote the set of common out-neighbors by
\begin{align}
\Co_M(i_1,i_2) &=  \mN_M(i_1)\cap \mN_M(i_2) \\
&= \set{j\in [n]: M_{(i_1,i_2)\times j}={1\choose 1}}
\end{align}
and denote also
\begin{align}
\Ex_M(i_1,i_2) &= \mN_M(i_1)\setminus \mN_M(i_2)\\
&= \set{j\in [n]: M_{(i_1,i_2)\times j}={1\choose 0}}
\end{align}
so that
\begin{equation}
\Ex_M(i_2,i_1) = \set{j\in [n]: M_{(i_1,i_2)\times j}={0\choose 1}}.
\end{equation}
We write $\co_M(i_1,i_2)$ and $\ex_M(i_1,i_2)$ for the cardinality of these sets, so that in our previous notation
\begin{align*}
\harp{\co}_\Gamma(i_1,i_2)&=  \co_M(i_1,i_2) \\
 \garp{\co}_\Gamma(j_1,j_2) &=\co_{M^\tran}(j_1,j_2)=\left|\set{i\in [n]: M_{i\times(j_1,j_2)}=(1\quad 1)}\right|.
\end{align*}

We note the following identities. 
From the constraints $\sum_{j=1}^nM(i_1,j)=d$ and $\sum_{j=1}^nM(i_2,j)=d$ we have
\begin{align}
\ex_M(i_1,i_2) & =  d-\co_M(i_1,i_2)	= \ex_M(i_2,i_1) \label{id23}.
\end{align}
Finally we have that
\begin{align}	\label{id4}
\left|\set{j\in [n]: M_{(i_1,i_2)\times j}={0\choose 0}}\right| = n-2d+\co_M(i_1,i_2).
\end{align}
We will also write $e_M(A,B)$ instead of $e_\Gamma(A,B)$.

\section{Concentration of measure and exchangeable pairs}		\label{sec_com}

In this section we prove analogues of the bounds in \Cref{thm_main} for two digraph models possessing more independence than the uniform $d$-regular digraph: the Erd\H os--R\'enyi model, in which all edges are independent, and the permutation model, as defined in \eqref{Mper}. 
The proofs for the former model illustrate the application of concentration of measure tools, and are completely standard.
Their use of Chernoff-type bounds (namely Bernstein's inequality), which are unavailable for random regular graphs, motivate the method of exchangeable pairs (Chatterjee's \Cref{thm_chatterjee}) as a substitute.
We prove \Cref{thm_edgeperm} for the permutation model in \Cref{sec_perm} as a simple illustration of the method.
The reader who is primarily interested in getting a feel for applying the method to combinatorial problems may prefer to read the proof of \Cref{thm_edgeperm} to the more technical proof of part (2) of \Cref{thm_main} in \Cref{sec_edge}.

\subsection{The Erd\H os--R\'enyi model} 		\label{sec_er}

Let $D=(V,E)$ be drawn from the distribution $D(n,p)$ over digraphs on $n$ vertices, where each directed edge is included independently with probability $p$. 

\begin{proposition}[Uniform control of codegrees and edge counts, Erd\H os--R\'enyi case] 
\label{prop_er}
\mbox{ }
\begin{enumerate}[(i)]
\item For any $\eps> 0$, except with probability $O\Big(n^2\expo{-\frac{c\eps^2}{1+\eps} p^2n}\Big)$ we have that
for all $i_1,i_2\in [n]$ distinct,
$$ \Big|\co_D(i_1,i_2) - p^2n \Big| \le \eps  p^2n.$$
\item For $\eps\in (0,1)$, let
\begin{equation}
\mF(\eps)= \set{(A,B):\; A, B\subset[n],\; \min\big(|A|,|B|\big) \ge \frac{C_0\log n}{\eps^2p} }
\end{equation}
where $C_0>0$ is a sufficiently large absolute constant. 
For any $\eps\in (0,1)$, with probability $1-O\Big(\expo{ -c\frac{\log^2n}{\eps^2p}}\Big)$ we have that 
for all $(A,B)\in \mF(\eps)$, 
\begin{equation}
 \Big|e_D(A,B) - p|A||B| \Big| \le \eps p |A||B|.
\end{equation}

\end{enumerate}

\end{proposition}

\begin{proof}

For fixed vertices $i_1,i_2\in V$ and subsets $A,B\subset V$, the statistics $\co_D(i_1,i_2)$ and $e_D(A,B)$ can be expressed as sums of iid indicator variables:
\begin{align}
\co_D(i_1,i_2) &= \sum_{j=1}^n \un((i_1,j)\in E)\un((i_2,j)\in E), \label{co_er}\\
e_D(A,B) &= \sum_{i\in A, j\in B} \un((i,j)\in E).	\label{e_er}
\end{align}
It follows that $\e \co_D(i_1,i_2)= p^2n$ and $\e e_D(A,B)= p|A||B|$. 
Furthermore, by Bernstein's inequality we have that for any $\eps\ge 0$, 
\begin{equation}	\label{erco}
\pr\Big[ \big|\co_D(i_1,i_2) - p^2n \big| \ge \eps p^2n\Big] \le 2 \expo{-\frac{c\eps^2}{1+\eps} p^2n}
\end{equation}
and
\begin{equation}		\label{eredge}
\pr\Big[ \big|e_D(A,B) - p|A||B| \big| \ge \eps p|A||B|\Big]  \le 2\expo{ - \frac{c\eps^2}{1+\eps} p|A||B|}
\end{equation}
for some absolute constant $c>0$. 
From \eqref{erco} and a union bound we obtain uniform control of codegrees off a small event:
\begin{equation}	\label{ercocontrol}
\pro{ \exists \mbox{ distinct } i_1,i_2 \in [n] : \Big|\co_D(i_1,i_2) - p^2n \Big| \ge \eps  p^2n } \ll n^2 \expo{-\frac{c\eps^2}{1+\eps} p^2n}
\end{equation}
which establishes (i). 

The proof of (ii) follows the same lines as in the proof of \Cref{cor_discrep} (in particular the part establishing \eqref{sts}), using the bounds \eqref{eredge} in place of \eqref{edge2side}.
\end{proof}

\subsection{Chatterjee's method of exchangeable pairs}		\label{sec_chatterjee}

The main challenge for proving analogous results for $d$-regular digraphs is that the entries of $M$ are all dependent on one another, and so we cannot apply off-the-shelf concentration of measure tools like Bernstein's inequality. 
The method of exchangeable pairs, as developed by Stein for normal-approximation \cite{Stein} and by Chatterjee for concentration of measure \cite{Chatterjee}, provides a convenient framework for analyzing dependent structures possessing measure preserving actions of a ``local" nature. 
We will use this to obtain bounds of the form \eqref{erco} and \eqref{eredge} for the random regular digraph $\Gamma$.

Recall that a pair of $\mM$-valued random variables $(M_1,M_2)$ is exchangeable if
$$(M_1,M_2)\eqd (M_2,M_1).$$
In particular we have $M_1\eqd M_2$. 
We will consider exchangeable pairs $(M,\Phi(M))$ formed by the application of a transformation $\Phi:\mM\rightarrow \mM$ with certain properties.
Roughly speaking, the method derives properties of a statistic $f(M)$, such as concentration or approximate normality, by analyzing the change in $f(M)$ under the application of $\Phi$. 

An example of a ``local" measure-preserving operation for a sequence of independent variables is to resample one of the variables independently of all others. 
For $d$-regular graphs, there are switching operations (described in \Cref{sec_switching}).

The following is a version of Theorem 1.5 from \cite{Chatterjee} suitable for our purposes:

\begin{theorem}[Chatterjee \cite{Chatterjee}]			\label{thm_chatterjee}
Let $\mM$ be a separable metric space, and suppose $(M,\tM)$ is an exchangeable pair of $\mM$-valued random variables, i.e.\
$$(M,\tM)\eqd (\tM,M).$$ 
Suppose $f:\mM\rightarrow \mathbf{R}$ and $F:\mM\times\mM\rightarrow \mathbf{R}$ are square-integrable functions such that $F(M,\tM)=-F(\tM,M)$ $a.s.$ and $\e(F(M,\tM)|M)=f(M)$ $a.s.$. Assume
\begin{equation}\label{finitemoment}
\e\left[e^{\theta f(M)}\big|F(M,\tM)\big|\right]<\infty
\end{equation}
for all $\theta\in \R$. Let
$$
v_f(M):=\frac{1}{2}\e\left[\big|\big(f(M)-f(\tM)\big)F(M,\tM)\big|\,\Big|\,M\right].
$$
If there are non-negative constants $K_1,K_2$ such that $v_f(M)\le K_1+K_2f(M)$ $a.s.$, then for any $t\ge 0$,
\begin{equation}
\pr(f(M)\ge t)\le \expo{-\frac{t^2}{2(K_1+K_2t)}}, \quad \pr(f(M)\le -t)\le \expo{-\frac{t^2}{2K_1}}.
\end{equation}

\end{theorem}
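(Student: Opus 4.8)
I would prove the stated concentration inequality by the standard exchangeable-pairs argument (this is a mild restatement of Theorem~1.5 of \cite{Chatterjee}, so one could also simply cite that result, but the proof is short). The plan is to bound the moment generating function $m(\theta):=\e\big[e^{\theta f(M)}\big]$ and then apply a Chernoff bound. First I would observe that $f(M)$ is automatically centered: taking $F$ itself in the hypothesis and combining $\e[F(M,\tM)\mid M]=f(M)$ with $F(M,\tM)=-F(\tM,M)$ and exchangeability gives $\e f(M)=\e F(M,\tM)=\tfrac12\e\big(F(M,\tM)+F(\tM,M)\big)=0$. The integrability hypothesis \eqref{finitemoment}, together with Jensen's inequality $e^{\theta f(M)}=e^{\theta\,\e[F(M,\tM)\mid M]}\le\e\big[e^{\theta F(M,\tM)}\mid M\big]$ to obtain finiteness, should let me justify that $m$ is finite, differentiable, and that $m'(\theta)=\e\big[f(M)e^{\theta f(M)}\big]$.

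The key step, and the one I expect to require the most care, is to rewrite $m'(\theta)$ using the exchangeable pair. Since $\e[F(M,\tM)\mid M]=f(M)$, we have $m'(\theta)=\e\big[F(M,\tM)e^{\theta f(M)}\big]$; applying the measure-preserving swap $(M,\tM)\mapsto(\tM,M)$ and then $F(\tM,M)=-F(M,\tM)$ shows $\e\big[F(M,\tM)e^{\theta f(M)}\big]=-\e\big[F(M,\tM)e^{\theta f(\tM)}\big]$, whence
\[
m'(\theta)=\tfrac12\,\e\Big[F(M,\tM)\big(e^{\theta f(M)}-e^{\theta f(\tM)}\big)\Big].
\]
Next I would invoke the elementary inequality $|e^x-e^y|\le\tfrac12|x-y|(e^x+e^y)$ (from $e^x-e^y=(x-y)\int_0^1 e^{(1-s)y+sx}\,ds$ and convexity of $\exp$), use exchangeability once more to symmetrize the $e^{\theta f(\tM)}$ factor into $e^{\theta f(M)}$, and condition on $M$. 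This collapses the bound to
\[
|m'(\theta)|\le|\theta|\,\e\big[v_f(M)e^{\theta f(M)}\big]\le|\theta|\big(K_1 m(\theta)+K_2 m'(\theta)\big),
\]
where the second inequality is precisely the hypothesis $v_f(M)\le K_1+K_2 f(M)$.

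It then remains to solve this differential inequality. For $0\le\theta<1/K_2$ it rearranges (handling both signs of $m'(\theta)$) to $(\log m)'(\theta)\le\theta K_1/(1-\theta K_2)$; integrating from $0$ and using $-\log(1-u)-u\le u^2/\big(2(1-u)\big)$ gives $\log m(\theta)\le K_1\theta^2/\big(2(1-\theta K_2)\big)$. Markov's inequality $\pr\big(f(M)\ge t\big)\le\exp\big(-\theta t+\log m(\theta)\big)$ with the choice $\theta=t/(K_1+K_2 t)$ yields the upper tail $\expo{-t^2/\big(2(K_1+K_2 t)\big)}$. For $\theta<0$ the same inequality, now using $1-\theta K_2\ge1$, gives the cruder $(\log m)'(\theta)\ge\theta K_1$, hence $\log m(\theta)\le K_1\theta^2/2$; applying Markov to $-f(M)$ and optimizing at $|\theta|=t/K_1$ produces $\pr\big(f(M)\le-t\big)\le\expo{-t^2/(2K_1)}$.

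The genuinely delicate parts are the analytic justifications in the first paragraph --- finiteness and differentiability of $m$ from \eqref{finitemoment}, and legitimacy of differentiating under the expectation --- and the sign bookkeeping needed to pass from $|m'(\theta)|\le|\theta|\big(K_1 m(\theta)+K_2 m'(\theta)\big)$ to a usable bound on $(\log m)'(\theta)$ on each of $\theta>0$ and $\theta<0$. Everything else reduces to elementary calculus inequalities and the Chernoff optimization.
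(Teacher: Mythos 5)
The paper itself does not prove this theorem --- it is stated as a quoted version of Theorem~1.5 of Chatterjee~\cite{Chatterjee} and simply cited. Your sketch correctly reproduces the standard Chatterjee argument: centering of $f$, the antisymmetrization identity $m'(\theta)=\tfrac12\e\big[F(M,\tM)(e^{\theta f(M)}-e^{\theta f(\tM)})\big]$, the elementary bound $|e^x-e^y|\le\tfrac12|x-y|(e^x+e^y)$ followed by resymmetrization to reach $|m'(\theta)|\le|\theta|\e[v_f(M)e^{\theta f(M)}]$, the self-bounding hypothesis to obtain the differential inequality, and then the sign-split Gr\"onwall/Chernoff optimization. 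All the steps check out, including the observation that $m$ is convex with $m'(0)=0$ so $m'(\theta)$ has the sign of $\theta$, which is what makes the passage to $(\log m)'$ legitimate on each half-line and makes the lower-tail case drop the $K_2$ term. This is the same route as the cited source, so there is nothing to compare beyond noting agreement.
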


\begin{remark}
The qualitative integrability conditions on $f$ and $F$ will be satisfied automatically in our applications as we will only consider bounded (depending on $n$) functions on a finite set.
\end{remark}

The quantity $v_f(M)$ is referred to by Chatterjee as a ``stochastic measure of the variance of $f(M)$", and one can view a bound of the form
$$v_f(M)\le K_1+K_2f(M)$$
as a generalization of the ``Lipschitz" conditions assumed in other commonly used concentration bounds such as McDiarmid's inequality \cite{McDiarmid}. 
We point the reader to \cite{Chatterjee} for further discussion of \Cref{thm_chatterjee} and its relation to other concentration inequalities.

\subsection{The permutation model: Proof of \Cref{thm_edgeperm}}		\label{sec_perm}

In this section we illustrate how one applies \Cref{thm_chatterjee} by proving the edge discrepancy bounds of \Cref{thm_edgeperm} for the permutation model $\Lambda$. 
The proof is a cartoon of the proof of the analogous bound from \Cref{thm_main} for the uniform model, given in \Cref{sec_edge}.
Various technical issues that must be addressed for the case of the uniform model are absent here; in particular, the independence between the permutation matrices allows us to proceed without any a priori control on codegrees.

We recall from \Cref{sec_results} that the permutation model $d$-regular directed multigraph $\Lambda$ has adjacency matrix given by
$$M_{\Lambda}= P_1+\cdots + P_d$$
where $P_1,\dots,P_d$ are iid uniform $n\times n$ permutation matrices. 
We may hence view the statistics $e_{\Lambda}(A,B)$ as functions of a uniform random element $\pi=(\pi_1,\dots, \pi_d)$ of $\sym(n)^d$, where $\sym(n)$ denotes the symmetric group over $[n]$. 
For $\sigma\in \sym(n)$ and $A,B\subset[n]$, denote
\begin{equation}
e_\sigma(A,B) = \big| \big\{ i\in A: \sigma(i)\in B \big\} \big|
\end{equation}
and for $\pi=(\pi_1,\dots,\pi_d)\in \sym(n)^d$ we set
\begin{equation}
e_\pi(A,B) = \sum_{k=1}^d e_{\pi_k}(A,B).
\end{equation}
If $\pi$ is a uniform random element of $\sym(n)^d$ we hence have
$$e_{\Lambda}(A,B) \eqd e_\pi(A,B).$$
\Cref{thm_edgeperm} is then a consequence of the following

\begin{proposition}		
\label{prop_perm}
If $\pi=(\pi_1,\dots,\pi_d)$ is a uniform random element of $\sym(n)^d$ and $A,B$ are fixed subsets of $[n]$, we have that for any $\tau\ge 0$, 
\begin{equation}
\pr\left\{e_\pi(A,B) \ge (1+\tau)\frac{d}n|A||B| \right\} \le \expo{-\frac{\tau^2}{2+\tau}\frac{d}{n}|A||B|}
\end{equation}
and
\begin{equation}
\pr\left\{ e_\pi(A,B) \le (1-\tau)\frac{d}n|A||B| \right\} \le \expo{-\frac{\tau^2}{2}\frac{d}{n}|A||B|}.
\end{equation}

\end{proposition}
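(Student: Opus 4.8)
The plan is to apply Chatterjee's \Cref{thm_chatterjee} with $\mM = \sym(n)^d$, $f(\pi) = e_\pi(A,B) - \mu$ where $\mu = \frac{d}{n}|A||B|$, and an exchangeable pair built from the natural ``resampling'' operation on one coordinate composed with a transposition. Concretely, to construct $\tpi$ from $\pi$: choose a uniformly random index $k\in[d]$, then choose two uniformly random elements $a,b\in[n]$ (independently, with replacement), and set $\tpi_\ell = \pi_\ell$ for $\ell\ne k$ while $\tpi_k = \pi_k\circ(a\,b)$ (pre-composition with the transposition $(a\,b)$, which is the identity if $a=b$). Since composing a uniform permutation with an independent transposition that is itself drawn in an exchangeable way leaves the pair $(\pi_k, \tpi_k)$ exchangeable, $(\pi,\tpi)$ is an exchangeable pair. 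The antisymmetric function will be a weighted version of $f(\pi)-f(\tpi)$: explicitly, with $d\cdot\binom{n}{2}$-type normalization one takes $F(\pi,\tpi) = \alpha_n\,(e_\pi(A,B) - e_{\tpi}(A,B))$ for a deterministic constant $\alpha_n$ chosen so that $\e[F(\pi,\tpi)\mid\pi] = f(\pi)$; the key computation is to evaluate this conditional expectation.

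The main computation is therefore: given $\pi$, what is $\e[e_\pi(A,B) - e_{\tpi}(A,B)\mid\pi]$? Only the $k$-th coordinate changes, and swapping $\pi_k(a)$ with $\pi_k(b)$ changes $e_{\pi_k}(A,B)$ by a quantity depending on whether $a,b\in A$ and whether $\pi_k(a),\pi_k(b)\in B$. Averaging over the uniform choice of $k$ contributes a factor $1/d$, and averaging over $a,b$ uniform in $[n]$ contributes $1/n^2$ per coordinate; after the dust settles one finds $\e[e_\pi(A,B)-e_{\tpi}(A,B)\mid\pi] = \frac{2}{dn^2}\big(|A||B| - n\,e_\pi(A,B)\big)$ or a similar expression, so that $f(\pi) = e_\pi(A,B)-\mu$ is recovered by taking $\alpha_n = \frac{dn^2}{2n} = \frac{dn}{2}$ (up to the sign, which is the point of the $|A||B|-n\,e_\pi$ form: the resampling is ``attracted'' to the mean). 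I would carry out this bookkeeping carefully since the exact constant feeds directly into $K_1$.

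Next I would bound $v_f(\pi) = \tfrac12\e[\,|(f(\pi)-f(\tpi))F(\pi,\tpi)|\mid\pi\,]$. Since a single transposition changes $e_\pi(A,B)$ by at most $1$ in absolute value (in fact by $1$, $0$, or $-1$, and it is nonzero only when exactly one of $a,b$ is ``good''), we have $|f(\pi)-f(\tpi)|\le 1$ always, and $|F(\pi,\tpi)| = \alpha_n|f(\pi)-f(\tpi)|$, so $v_f(\pi) \le \tfrac{\alpha_n}{2}\,\e[\,|f(\pi)-f(\tpi)|\mid\pi\,] = \tfrac{\alpha_n}{2}\,\e[(e_\pi(A,B)-e_{\tpi}(A,B))^2\mid\pi]^{1/2}$-style bound — more simply, $v_f(\pi)\le \tfrac{\alpha_n}{2}\,\e[|e_\pi(A,B)-e_{\tpi}(A,B)|\mid\pi]$ since the difference is $\{-1,0,1\}$-valued, making $|X| = X^2$. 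Counting the pairs $(k,a,b)$ for which the swap is ``active'' shows this conditional expectation is of order $\frac{1}{dn^2}$ times (number of edges from $A$ to $B$ plus number of edges from $A$ to $B^c$, roughly), which one can bound by $C\big(\mu + e_\pi(A,B)\big)/(dn^2)\cdot$(something), ultimately giving $v_f(\pi)\le K_1 + K_2 f(\pi)$ with $K_1 \asymp \mu$ and $K_2\asymp 1$; tracking constants yields $K_1 = \mu$, $K_2 = \tfrac12$ (or whatever the careful count gives), so that $\frac{t^2}{2(K_1+K_2 t)} = \frac{t^2}{2\mu+t}$ with $t = \tau\mu$ gives the stated $\frac{\tau^2\mu}{2+\tau}$, and the lower tail $\frac{t^2}{2K_1} = \frac{t^2}{2\mu}$ gives $\frac{\tau^2\mu}{2}$. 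The expected main obstacle is purely the combinatorial bookkeeping in the two conditional-expectation computations — getting the exact constant in $\e[F\mid\pi] = f$ and then a clean linear-in-$f$ bound for $v_f$ with the right $K_1$ — rather than any conceptual difficulty; the independence of the $\pi_k$ is what makes this painless compared to the uniform model, since the resampling of one coordinate does not disturb a regularity constraint.
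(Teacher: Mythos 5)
Your proposal is correct and follows essentially the same route as the paper: both apply \Cref{thm_chatterjee} to the exchangeable pair obtained by pre-composing one uniformly chosen coordinate $\pi_k$ with a random transposition, and both derive the identity $\e[F(\pi,\tilde\pi)\mid\pi]=e_\pi(A,B)-\mu$ and the self-bounding inequality $v_f(\pi)\le \tfrac12 f(\pi)+\mu$ before plugging in $t=\tau\mu$. The only difference is cosmetic: the paper samples $I_1\in A$, $I_2\in A^c$ directly (so every draw is ``active''), whereas you sample $a,b$ uniformly from $[n]$ and let the dead cases (both in $A$, both in $A^c$, or $a=b$) be absorbed into the normalization; carrying out your bookkeeping gives the same $\alpha_n=\tfrac{dn}{2}$ and $K_1=\mu$, $K_2=\tfrac12$, so the final constants agree. (One displayed intermediate expression for $\e[e_\pi-e_{\tilde\pi}\mid\pi]$ has a sign flip and a missing factor $d$ in the constant term — it should be $\tfrac{2}{dn^2}(n\,e_\pi(A,B)-d|A||B|)=\tfrac{2}{dn}f(\pi)$ — but you flag the expression as schematic and the downstream $\alpha_n$ you extract from it is correct.)
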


The proof is similar to the proof of Proposition 1.1 in \cite{Chatterjee}, which was concerned with a more general statistic but for the case of $d=1$. 
Here and in the remainder of the paper we will make use of the following 
\begin{observation}[Exchangeable pair from an involution]		\label{obs_invo}
Let $\mM$ be a finite set, and suppose $\Phi:\mM\rightarrow \mM$ is an involution. 
Let $M$ be a uniform random element of $\mM$, and set $\tM=\Phi(M)$. 
Then $(M,\tM)$ is an exchangeable pair of uniformly distributed elements of $\mM$.

\end{observation}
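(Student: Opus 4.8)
The plan is simply to unwind the definitions; there is essentially nothing to do beyond a one-line computation with the uniform measure. First I would record that an involution on a finite set is in particular a bijection: from $\Phi\circ\Phi=\Id$ we see that $\Phi$ is its own two-sided inverse, so $\Phi$ permutes $\mM$. Consequently the pushforward of the uniform law on $\mM$ under $\Phi$ is again the uniform law, which already shows $\tM=\Phi(M)\eqd M$; in particular $\tM$ is a uniformly distributed element of $\mM$.

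For exchangeability I would compute the joint law of $(M,\tM)$ at a general pair $(a,b)\in\mM\times\mM$. Since $M$ is uniform and $\tM$ is the deterministic image $\Phi(M)$, we have $\pr\big((M,\tM)=(a,b)\big)=\tfrac{1}{|\mM|}\un(\Phi(a)=b)$. The only point to check is that the relation $\Phi(a)=b$ is symmetric in $a$ and $b$: applying $\Phi$ to both sides and using $\Phi\circ\Phi=\Id$ gives $\Phi(a)=b\iff a=\Phi(b)\iff\Phi(b)=a$. Hence $\pr\big((M,\tM)=(a,b)\big)=\tfrac{1}{|\mM|}\un(\Phi(b)=a)=\pr\big((M,\tM)=(b,a)\big)$, which is precisely the assertion $(M,\tM)\eqd(\tM,M)$.

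There is no genuine obstacle here; the observation is stated only because the construction $\tM=\Phi(M)$ from an involution $\Phi$ will be reused repeatedly (for the simple switching and reflection couplings) to supply the exchangeable pairs needed to apply \Cref{thm_chatterjee}. The one thing worth flagging is that finiteness of $\mM$ is used merely to make ``uniform random element'' meaningful and to identify the pushforward of counting measure under a bijection with counting measure; the identical argument goes through verbatim for any measure-preserving involution on a probability space.
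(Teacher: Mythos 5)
Your proof is correct and rests on the same two facts as the paper's: $\Phi$ is a bijection (so the uniform law is preserved) and $\Phi^2=\Id$. The paper phrases it slightly more compactly by writing $(M,\tM)=(M,\Phi(M))\eqd(\Phi(M),\Phi^2(M))=(\tM,M)$, whereas you verify the same symmetry by computing the joint law $\pr\big((M,\tM)=(a,b)\big)=\tfrac{1}{|\mM|}\un(\Phi(a)=b)$ pointwise; these are the same argument in different clothing.
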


\begin{proof}
Since $M$ is uniform and $\Phi$ is a permutation we have $\Phi(M)\eqd M$, and so
$$(M,\tM) = (M,\Phi(M)) \eqd (\Phi(M),\Phi^2(M)) = (\tM,M).$$
\end{proof}

\begin{proofof}{\Cref{prop_perm}}
Define the anti-symmetric function $F: \sym(n)^d\times \sym(n)^d\rightarrow \R$ by
$$F(\pi,\pi') = K\big[ e_{\pi}(A,B) - e_{\pi'}(A,B) \big]$$
where $K$ is a normalizing constant. 
With foresight we take
\begin{equation}		\label{mK}
K=\frac{d}{n}a(n-a)
\end{equation}
where we denote $|A|=a$, $|B|=b$. 

We construct an exchangeable pair $(\pi,\tpi)$ of uniform random elements of $\sym(n)^d$ as follows. We draw the following random variables, uniformly at random from their respective ranges:
\begin{itemize}
\item $\pi= (\pi_1,\dots, \pi_d) \in \sym(n)^d$,
\item $J\in [d]$,
\item $I_1\in A$
\item $I_2\in [n]\setminus A$
\end{itemize}
with $\pi, J, I_1,I_2$ jointly independent. 
We form $\tpi$ by replacing $\pi_J$ with $\uptau_{\set{I_1,I_2}}\circ \pi_J$, where $\uptau_{\set{i_1,i_2}}$ denotes the transposition of $i_1,i_2\in [n]$; $(\pi_k)_{k\ne J}$ are left unchanged. 
$(\pi,\tpi)$ is an exchangeable pair by \Cref{obs_invo}. 
We have
\begin{align}
e_{\pi}(A,B) - e_{\tpi}(A,B)  &= e_{\pi_J}(A,B) - e_{\uptau_{\set{I_1,I_2}}\circ \pi_J}(A,B) \\
&= \un(\pi_J(I_1)\in B) \un(\pi_J(I_2)\notin B) - \un(\pi_J(I_1)\notin B)\un(\pi_J(I_2)\in B)
\end{align}
and so
\begin{align}
f(\pi) &:= \e\big[ F(\pi,\tpi) \big| \pi \big] \notag\\
&= K \Big[ \pr\Big\{ {\pi_J}(I_1)\in B, \; {\pi_J}(I_2) \notin B \big| \pi\Big\} - \pr\Big\{{\pi_J}(I_1)\notin B, \; {\pi_J}(I_2) \in B \big| \pi \Big\} \Big] \label{pmp}\\
&= K \e_J\left[ \frac{e_{\pi_J}(A,B)}{a} \frac{e_{\pi_J}(A^c,B^c)}{n-a} - \frac{e_{\pi_J}(A,B^c)}{a} \frac{e_{\pi_J}(A^c,B)}{n-a} \right] \notag\\
&= \frac{K}{a(n-a)}\e_J\Big[ e_{\pi_J}(A,B)\big( n-a-b + e_{\pi_J}(A,B)\big) - \big( a - e_{\pi_J}(A,B) \big) \big( b- e_{\pi_J}(A,B)\big) \Big] \notag\\
&= \frac{Kn}{a(n-a)}\e_Je_{\pi_J}(A,B) - \frac{dab}n \\
&= e_\pi(A,B) - \frac{dab}{n}
\end{align}
where in the last line we applied \eqref{mK}.

It remains to bound the quantity $v_f(\pi)$ from \Cref{thm_chatterjee}.
We have
\begin{align*}
(f(\pi)-f(\tpi))^2 &= (e_\pi(A,B) - e_{\tpi}(A,B))^2\\
&= \un({\pi_J}(I_1)\in B) \un({\pi_J}(I_2)\notin B) + \un({\pi_J}(I_1)\notin B)\un({\pi_J}(I_2)\in B)
\end{align*}
so
\begin{align*}
v_f(\pi) &:= \frac{1}{2} \e\big[ |f(\pi)-f(\tpi)||F(\pi,\tpi)| \big| \pi\big] \\
&=  \frac{K}{2} \e \big[ \big( f(\pi)-f(\tpi)\big)^2 \big| \pi \big] \\
&= \frac{K}2 \Big[ \pr\Big\{ {\pi_J}(I_1)\in B, \; {\pi_J}(I_2) \notin B \big| \pi\Big\} + \pr\Big\{{\pi_J}(I_1)\notin B, \; {\pi_J}(I_2) \in B \big| \pi \Big\} \Big] \\
&= \frac12 f(\pi) + K\pr\Big\{{\pi_J}(I_1)\notin B, \; {\pi_J}(I_2) \in B \big| \pi \Big\} \\
&= \frac12f(\pi) + K\e_J\frac{ \big( a - e_{\pi_J}(A,B) \big) \big( b- e_{\pi_J}(A,B)\big)}{a(n-a)}\\
&\le \frac{1}{2}f(\pi) + \frac{d}{n}ab
\end{align*}
where in the fourth line we applied \eqref{pmp}.

By \Cref{thm_chatterjee} we conclude that for any $t\ge0$, 
\begin{align}
\pr\bigg\{ e_\pi(A,B) - \frac{d}{n}ab \ge t \bigg\} &\le \expo{ -\frac{t^2}{2\frac{d}{n}ab + t}} \\
\pr\bigg\{ e_\pi(A,B) - \frac{d}{n}ab \le -t \bigg\} &\le \expo{ -\frac{t^2}{2\frac{d}{n}ab }}.
\end{align}
Setting $t=\tau \frac{d}{n}ab$ completes the proof. 
\end{proofof}

\section{Exchangeable pairs constructions}			\label{sec_couplings}

In this section we define two involutions on $\mM_{n,d}$ -- \emph{simple switchings} and \emph{reflections} -- which we use to create exchangeable pairs $(M,\tM)$ of rrd matrices via \Cref{obs_invo}.

\subsection{Simple switching}		\label{sec_switching}

Below we set up our notation for switchings on a digraph in terms of the adjacency matrix $M$.

\begin{definition}[Simple switching]	\label{def_switching}
For $M\in \mM_{n,d}$ and $i_1,i_2,j_1,j_2\in [n]$, we say that the $2\times2$ minor $M_{(i_1,i_2)\times(j_1,j_2)}$ is \emph{switchable} if it is equal to either
\begin{equation}	\label{eyedef}
\eye=\mat[1]{0}{0}{1}\quad \mbox{or}\quad \jay=\mat[0]{1}{1}{0}. 
\end{equation}
By \emph{perform a switching at $(i_1,i_2)\times(j_1,j_2)$ on $M$} we mean to replace the minor $M_{(i_1,i_2)\times(j_1,j_2)}$ with $\jay$ if it is $\eye$ and $\eye$ if it is $\jay$, and to leave $M$ unchanged if this minor is not switchable. 
\end{definition}

In the associated digraph $\Gamma$, the switching operation changes between the following edge configurations at vertices $i_1,i_2,j_1,j_2$:
\begin{center}

\eyepic \hspace{2.2cm} \jaypic

\end{center}
where we use solid arrows to depict directed edges, and dashed arrows to indicate places where there is no edge (i.e.\ ``non-edges").

\begin{lemma}[Switching coupling] 	\label{lem_switching}
For $i_1,i_2,j_1,j_2\in [n]$, let $\Phi_{(i_1,i_2)\times(j_1,j_2)}:\mM_{n,d}\rightarrow \mM_{n,d}$ denote the map which performs a simple switching at the minor $(i_1,i_2)\times(j_1,j_2)$. 
If $M$ is an rrd matrix (i.e.\ a uniform random element of $\mM_{n,d}$) and $I_1,I_2,J_1,J_2\in [n]$ are random (or deterministic) indices independent of $M$, then setting
\begin{equation}
\tM:= \Phi_{(I_1,I_2)\times(J_1,J_2)}(M)
\end{equation}
we have that $(M,\tM)$ is an exchangeable pair of rrd matrices.

\end{lemma}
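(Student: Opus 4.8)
The plan is to reduce the statement to \Cref{obs_invo} by verifying that $\Phi := \Phi_{(I_1,I_2)\times(J_1,J_2)}$ is an involution on $\mM_{n,d}$ for each fixed choice of indices, and then showing that injecting independent random indices preserves exchangeability. First I would check the involution property for fixed $i_1,i_2,j_1,j_2$: by \Cref{def_switching}, applying the switching either leaves $M$ untouched (when $M_{(i_1,i_2)\times(j_1,j_2)}$ is not switchable, i.e.\ not $\eye$ or $\jay$) or swaps the minor between $\eye$ and $\jay$. In the first case $\Phi(\Phi(M))=M$ trivially, and in the second case the minor toggles back after two applications while all other entries are untouched; moreover one must note that performing the switching does not change the row or column sums (each of $\eye,\jay$ has row sums $1$ and column sums $1$), so $\Phi$ indeed maps $\mM_{n,d}$ into itself. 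Hence $\Phi_{(i_1,i_2)\times(j_1,j_2)}$ is an involution of the finite set $\mM_{n,d}$.

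Next I would handle the randomized indices. Write $\mathcal{I}$ for the (finite) index set from which $(I_1,I_2,J_1,J_2)$ is drawn, with some distribution independent of $M$, and for $\iota\in\mathcal{I}$ let $\Phi_\iota$ denote the corresponding fixed involution. The pair in question is $(M,\tM)=(M,\Phi_I(M))$ where $I$ has the given distribution and is independent of the uniform $M$. To show $(M,\Phi_I(M))\eqd(\Phi_I(M),M)$, I would condition on $I=\iota$: conditionally, $M$ is still uniform on $\mM_{n,d}$ and $\tM=\Phi_\iota(M)$, so by \Cref{obs_invo} the conditional law of $(M,\Phi_\iota(M))$ equals that of $(\Phi_\iota(M),M)$. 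Integrating over the law of $I$ (which is the same on both sides, since $I$ is independent of $M$ in both orderings — here I use that $\Phi_I$ is a deterministic function of $(M,I)$ applied symmetrically) gives $(M,\tM)\eqd(\tM,M)$. In particular $\tM$ is again uniform on $\mM_{n,d}$, i.e.\ an rrd matrix, which is the remaining claim.

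There is essentially no hard obstacle here; the lemma is a bookkeeping statement. The one point that deserves a line of care is the conditioning/mixing step: exchangeability of $(M,\Phi_I(M))$ is \emph{not} automatic from pairwise exchangeability of each $(M,\Phi_\iota(M))$ unless one is careful about what randomness $I$ shares with $M$ — but since $I\perp M$, the joint law of $(M,I)$ is a product, and applying the measurable symmetric operation $(\,\cdot\,,\Phi_{(\cdot)}(\,\cdot\,))$ followed by marginalizing the unchanged-in-distribution $I$ is routine. I would present the conditioning argument explicitly for clarity and then note that $M\eqd\tM$ uniform follows immediately.
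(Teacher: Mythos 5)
Your proof is correct and follows the same route as the paper: condition on the indices, observe that each $\Phi_{(i_1,i_2)\times(j_1,j_2)}$ is an involution on $\mM_{n,d}$, invoke \Cref{obs_invo}, and integrate out the index randomness. Your added remarks (that switching preserves row/column sums, and the explicit conditioning/marginalizing step) are just spelling out details the paper leaves implicit.
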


\begin{proof}
We may condition on $I_1,I_2,J_1,J_2$. 
Note that the map $\Phi_{(I_1,I_2)\times(J_1,J_2)}$ is an involution on $\mM_{n,d}$. 
The result now follows from \Cref{obs_invo}.
\end{proof}

\subsection{Reflection}			\label{sec_reflection}

In order to prove that the random variables $\co_M(i_1,i_2)$ are concentrated we will need a different 
operation on random regular digraphs of switching-type which we call ``reflection". 
We pause to give some motivation and intuition for the rigorous definition below.

Suppose first that we only want to prove an upper tail bound on $\co_M(1,2)$. 
Hence, we want to show it is unlikely that for most $j\in [n]$ we have
$$M_{(1,2)\times j} = {1\choose 1} \quad\mbox{or}\quad {0\choose 0}$$
i.e., that the first two rows of $M$ are nearly parallel.
The idea is to show that for a pair of column indices $j_1,j_2\in [n]$, the event that 
\begin{equation}		\label{K2}
M_{(1,2)\times (j_1,j_2)}=\mat[1]{0}{1}{0}
\end{equation}
is roughly just as likely as the event that
\begin{equation}		\label{I2}
M_{(1,2)\times (j_1,j_2)}=\mat[1]{0}{0}{1}.
\end{equation}
We will do this by defining a ``reflection" operation which switches the $(1,2)\times(j_1,j_2)$ minor between these two outcomes. 
If we can perform reflections independently at random at several disjoint pairs of column indices, we can then deduce from Hoeffding's inequality that with high probability there are many columns $j$ for which
$$M_{(1,2)\times j}={1\choose 0}\quad\mbox{or} \quad{0\choose 1}$$
as desired. 
While this approach can be made precise, we can do much better by instead using \Cref{thm_chatterjee}, which gives upper and lower tail estimates for $\co_M(1,2)$ around its mean. 

While it is possible to alternate between the minors \eqref{K2} and \eqref{I2} using simple switchings involving entries from a third row, it turns out that when one tries to apply \Cref{thm_chatterjee} with this coupling some control on the quantities $\co_{M^\tran}(j_1,j_2)$ is needed, so that such an approach is circular. 

The reflection involution is most natural to state in terms of a \emph{walk} $w_{(j_1,j_2)}:[n]\rightarrow \Z$ associated to an ordered pair of columns $X_{j_1},X_{j_2}$ of $M$. 
For $(j_1,j_2)\in [n]^2$ we define
\begin{equation}		\label{def_walk}
w_{(j_1,j_2)}(i)=\sum_{k=1}^i \un\big(M_{k\times (j_1, j_2)}=(1\; 0)\big)- \un\big(M_{k\times(j_1, j_2)}=(0\; 1)\big).
\end{equation}
If we think of $w_{(j_1,j_2)}$ as giving the position of a walker on $\Z$, the walker starts at 0 and, reading down the pair of columns $(X_{j_1},X_{j_2})$ of $M$, takes a step in the $+$ direction each time it sees a row equal to $(1\; 0)$, a step in the $-$ direction each time it reads $(0\; 1)$, and does not move otherwise. 
By $d$-regularity, the walker takes an even number of steps, half to the left and half to the right, ending its walk at 0.
The number of steps is between $0$ and $2d$; in the former case $X_{j_1}$ and $X_{j_2}$ are parallel, and in the latter case they are orthogonal.

\begin{definition}[Reflecting pair]
\label{def_reflection}
With $w_{(j_1,j_2)}$ as in \eqref{def_walk}, we say that an ordered pair of column indices $(j_1,j_2)\in [n]^2$ is \emph{reflecting} for $M\in \mM_{n,d}$ if 
\begin{enumerate}
\item $w_{(j_1,j_2)}(1)=+1$, 
\item $w_{(j_1,j_2)}(2)\ne +1$, and

\item there exists $i\in [3,n]$ such that $w_{(j_1,j_2)}(i) = +1$

\end{enumerate}
that is, if the walker moves to $+1$ on the first step, leaves $+1$ on the second step, and returns again to $+1$ at some later time.

\end{definition}

Conditions (1) and (2) above assert that the minor $M_{(1,2)\times(j_1,j_2)}$ is either 
$$\kay:= \mat[1]{0}{1}{0} \quad \mbox{or}\quad \eye= \mat[1]{0}{0}{1} .$$
We pause to note that condition (3) usually holds if (1) and (2) hold. 
Indeed,
note that if $w_{(j_1,j_2)}(2)=+2$ then condition (3) follows automatically from (1) and (2) since the walk must pass through $+1$ on its way back to 0. Hence, any pair $(j_1,j_2)$ of column indices such that $M_{(1,2)\times (j_1,j_2)}=\kay$ is reflecting. 

On the other hand, note that a non-reflecting pair $(j_1,j_2)$ for which $M_{(1,2)\times (j_1,j_2)}=\eye$ corresponds to a walk $w_{(j_1,j_2)}$ that reaches $+1$ on the first step, then turns back and never returns to $+1$.
Non-reflecting pairs satisfying (1) and (2) but not (3) hence correspond to walks that do not cross the line $w=0$ after the second step, 
so we can bound the probability of this happening by a standard enumerative argument involving Catalan numbers. 
This is carried out in the proof of \Cref{lem_gbound}.
Consequently, one may think of reflecting pairs as essentially being those $(j_1,j_2)\in [n]^2$ such that $M_{(1,2)\times(j_1,j_2)}= \kay$ or $\eye$.

If $(j_1,j_2)$ is reflecting for $M$, denote by
$$i^*(j_1,j_2):=\min\set{i\in [3,n]:w_{(j_1,j_2)}(i)=+1}$$
the first return time to +1.

\begin{lemma}[Reflection coupling] 		\label{lem_reflection}
For $(j_1,j_2)\in [n]^2$, let $\;\Psi_{(j_1,j_2)}:\mM_{n,d}\rightarrow \mM_{n,d}$ denote the map which replaces the minor $M_{[2,i^*]\times(j_1,j_2)}$ with the ``reflected" minor $M_{[2,i^*]\times(j_2,j_1)}$ if $(j_1,j_2)$ is reflecting, and leaves $M$ unchanged otherwise. 
If $M$ is an rrd matrix and $J_1,J_2\in [n]$ are random column indices independent of $M$, then setting
\begin{equation}
\tM:= \Psi_{(J_1,J_2)}(M)
\end{equation}
we have that $(M,\tM)$ is an exchangeable pair of rrd matrices. 

\end{lemma}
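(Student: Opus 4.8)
The plan is to invoke Observation \ref{obs_invo}, so the entire content of the proof is to verify that $\Psi_{(j_1,j_2)}$ is an involution on $\mM_{n,d}$ for every fixed pair $(j_1,j_2)$; once that is done, we condition on the independent indices $J_1,J_2$ and apply the observation exactly as in the proof of Lemma \ref{lem_switching}. So first I would reduce to the claim: for each fixed ordered pair $(j_1,j_2)$, (a) $\Psi_{(j_1,j_2)}$ maps $\mM_{n,d}$ into itself, and (b) $\Psi_{(j_1,j_2)}\circ\Psi_{(j_1,j_2)}=\Id$.

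For (a), the map only alters entries in columns $j_1$ and $j_2$, and in rows $[2,i^*]$; swapping the two columns within those rows preserves all row sums (each affected row just has its $j_1$ and $j_2$ entries exchanged) and preserves each column sum globally, since the multiset of entries $\{M_{k\times j_1},M_{k\times j_2}\}$ for $k\in[2,i^*]$ is unchanged — the total number of $1$s among these rows in columns $j_1,j_2$ combined is conserved, and the conditions defining $i^*$ (that $w$ returns to $+1$, i.e. the net imbalance over rows $2,\dots,i^*$ is zero) guarantee that columns $j_1$ and $j_2$ individually retain their counts over the reflected block. Concretely, by definition $w_{(j_1,j_2)}(i^*)=+1=w_{(j_1,j_2)}(1)$, so over rows $2,\dots,i^*$ the number of $(1\;0)$ rows equals the number of $(0\;1)$ rows; swapping the columns over exactly this block therefore leaves each of the two column sums invariant. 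Hence $\Psi_{(j_1,j_2)}(M)\in\mM_{n,d}$.

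For (b), I would check that reflecting pairs are preserved and that $i^*$ is preserved. Writing $M'=\Psi_{(j_1,j_2)}(M)$: if $(j_1,j_2)$ is not reflecting for $M$ there is nothing to do. If it is, note that the walk $w^{M'}_{(j_1,j_2)}$ agrees with $w^{M}_{(j_1,j_2)}$ on $\{1\}\cup[i^*,n]$ and that on the block $[2,i^*]$ the steps of $w^{M'}$ are the steps of $w^{M}$ with the roles of $(1\;0)$ and $(0\;1)$ interchanged — i.e. the block-portion of the walk is reflected across the horizontal line through its endpoints (both endpoints are at height $+1$). This reflection keeps the walk at height $+1$ at rows $1$ and $i^*$ and keeps it away from $+1$ at row $2$ (a step down from $+1$ becomes a step up, but either way $w^{M'}(2)\ne+1$), and it does not create an earlier return to $+1$ inside $[3,i^*-1]$, because a return at some $i<i^*$ for $w^{M'}$ would reflect back to a return for $w^{M}$ at the same $i$, contradicting minimality of $i^*$. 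Thus $(j_1,j_2)$ is reflecting for $M'$ with the same first-return time $i^*$, and applying $\Psi_{(j_1,j_2)}$ again swaps columns $j_1,j_2$ back on rows $[2,i^*]$, recovering $M$. Therefore $\Psi_{(j_1,j_2)}$ is an involution, and the lemma follows by conditioning on $J_1,J_2$ and applying Observation \ref{obs_invo}.

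The only genuinely delicate point — the ``main obstacle'' — is the bookkeeping in (b) showing that $i^*$ is unchanged by one application of $\Psi_{(j_1,j_2)}$: one must be careful that the walk stays off $+1$ at row $2$ and does not return to $+1$ strictly before $i^*$ after reflection. Everything else (column/row sum conservation, and the final appeal to Observation \ref{obs_invo}) is routine.
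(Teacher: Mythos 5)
Your proposal is correct and follows essentially the same route as the paper: condition on $J_1,J_2$, reduce to showing $\Psi_{(j_1,j_2)}$ is an involution on $\mM_{n,d}$, and invoke \Cref{obs_invo}. The paper frames the involution check via a three-class decomposition ($\mM^0$, $\mM^+$, $\mM^-$) and an explicit pairing $\mP$ between $\mM^+$ and $\mM^-$, whereas you verify directly that $\Psi$ preserves membership in $\mM_{n,d}$ and that the first-return time $i^*$ is fixed by one application — details the paper leaves implicit — but the underlying argument (the reflection principle applied to the walk $w_{(j_1,j_2)}$) is identical.
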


\begin{proof}
By conditioning on $J_1,J_2$, from \Cref{obs_invo} it suffices to show that $\Psi_{(j_1,j_2)}$ is an involution on $\mM_{n,d}$ for $j_1,j_2\in [n]$ fixed.  

$\Psi_{(j_1,j_2)}$ acts trivially if $j_1=j_2$, so we may fix $j_1,j_2\in [n]$ distinct. 
We can now divide $\mM_{n,d}$ into three classes:
\begin{enumerate}
\item $\mM^0_{(j_1,j_2)}$, the set of $M$ such that $(j_1,j_2)$ is not reflecting for $M$. 
\item $\mM^+_{(j_1,j_2)}$, the set of $M$ such that $(j_1,j_2)$ is reflecting for $M$ and $w_{(j_1,j_2)}(2)=+2$. 
\item $\mM^-_{(j_1,j_2)}$, the set of $M$ such that $(j_1,j_2)$ is reflecting for $M$ and $w_{(j_1,j_2)}(2)=0$. 
\end{enumerate}
We dispense with the subscripts $(j_1,j_2)$ for the remainder of the proof.

$\Psi$ acts trivially on $\mM^0$. 
We will show that $\Psi$ is a bijection between $\mM^+$ with $\mM^-$ with $\Psi^2 = \mbox{Id}$. 

We define a pairing $\mP$ of the elements of $\mM^+$ with those of $\mM^-$ (in particular, these sets have the same cardinality). 
For $(M^+,M^-)\in \mM^+\times \mM^-$, let $w^+$ and $w^-$ denote the associated walks for the columns $(j_1,j_2)$. 
We say that $(M^+,M^-)$ is in $\mP$ if the first return time $i^*$ of the walks $w^+, w^-$ to $+1$ is the same, and if the walk $w^+$ is obtained from $w^-$ by reflecting the portion of the trajectory 
of $w^-(i)$ with $i\in [2,i^*]$ across the line $w=+1$.
We conclude the proof by noting that $\Psi$ sends each $M\in \mM^+\cup \mM^-$ to its mate in $\mP$. 
\end{proof}

\begin{remark}
The bijection $\Psi_{(j_1,j_2)}$ above is an application of the well-known \emph{reflection principle} from the theory of random walks -- see for instance \cite[Chapter III]{Feller:vol1}.
\end{remark}

\section{The upper tail for codegrees}		\label{sec_codeg1}

Our aim in this section is to prove the following

\begin{proposition}[Upper tail for codegree]		\label{prop_codeg}
For any $\eps\ge 0$ and any distinct $i_1,i_2\in [n]$, 
\begin{equation}		\label{cotailup}
\pr\Big\{\co_M(i_1,i_2)-\pe^2 n\ge \eps\hat{\pe}^2n\Big\} \le \expo{ -\frac{\eps^2}{4+2\eps}\hat{\pe}^2n}.
\end{equation}
\mbox{ }

\end{proposition}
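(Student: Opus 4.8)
The goal is an upper tail bound for $\co_M(i_1,i_2)$ at scale $\hat p^2 n = (\hat d/n)^2 n$, and the tool is Chatterjee's \Cref{thm_chatterjee} applied with the reflection coupling of \Cref{lem_reflection}. By symmetry we may take $i_1=1$, $i_2=2$. First I would construct the exchangeable pair: draw $M$ uniformly from $\mM_{n,d}$ and draw $(J_1,J_2)$ uniformly from $[n]^2$ independently of $M$, then set $\tM = \Psi_{(J_1,J_2)}(M)$, which is exchangeable by \Cref{lem_reflection}. The function to concentrate is (a rescaling of) $f_0(M) := \co_M(1,2)$, and the antisymmetric kernel is
\[
F(M,\tM) = K\big[\co_M(1,2) - \co_{\tM}(1,2)\big]
\]
for a normalizing constant $K$ to be chosen. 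The first main computation is to evaluate $f(M) := \e[F(M,\tM)\mid M]$ and check that, with the right $K$, it equals $f_0(M)$ minus its mean (or a close variant), so that the hypothesis $\e[F(M,\tM)\mid M] = f(M)$ of \Cref{thm_chatterjee} is met with $f$ a centered multiple of the codegree.

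\textbf{Key steps.} (1) Understand the effect of one reflection on $\co(1,2)$. When $(J_1,J_2)$ is reflecting, $\Psi$ swaps the minor $M_{[2,i^*]\times(J_1,J_2)}$ with $M_{[2,i^*]\times(J_2,J_1)}$. The first row of the minor is either $\kay$ or $\eye$ by conditions (1)--(2) of \Cref{def_reflection}, and reflecting across $w=+1$ toggles it between $\kay$ and $\eye$. Crucially, rows $k\in[3,i^*]$ of the minor: swapping columns $j_1\leftrightarrow j_2$ fixes the rows that are $(1\;1)$ or $(0\;0)$ and toggles $(1\;0)\leftrightarrow(0\;1)$; since toggling $(1\;0)\leftrightarrow(0\;1)$ does not change whether row $k$ contributes to $\co(1,2)$ (it contributes iff the row is $(1\;1)$), the only change to $\co_M(1,2)$ comes from row $1$: it is $\pm 1$ according to whether the first step is a toggle $\eye\to\kay$ (increases $\co$ by $1$) or $\kay\to\eye$ (decreases by $1$). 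So $\co_M(1,2) - \co_{\tM}(1,2) = \un(M_{(1,2)\times(J_1,J_2)}=\eye) - \un(M_{(1,2)\times(J_1,J_2)}=\kay)$ on the reflecting event, and the ``reflecting'' qualifier only differs from ``$M_{(1,2)\times(J_1,J_2)}\in\{\kay,\eye\}$'' through the rare non-reflecting-$\eye$ pairs flagged in the discussion before \Cref{def_reflection}. (2) Compute $f(M)$: averaging over $(J_1,J_2)$ uniform, the count of column pairs with minor $\kay$ on rows $1,2$ is $\ex_M(1,2)\,\ex_M(2,1) = \ex_M(1,2)^2 = (d-\co_M(1,2))^2$ by \eqref{id23}, and the count with minor $\eye$ is likewise $(d-\co_M(1,2))^2$ --- these two are equal! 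So the naive leading term cancels, and $f(M)$ is governed by the correction coming from non-reflecting pairs with minor $\eye$; I would denote this count by $g(M)$ and get $f(M) = -\tfrac{K}{n^2} g(M)$ or similar. This forces us to bring in a bound on $g(M)$. (3) This is where I expect the structure of the actual proof to require handling $g(M)$: either bound $g(M)$ deterministically / with high probability using the Catalan-number argument alluded to (so that $f(M)$ is close to a clean centered quantity), or --- more likely, given the remark before \Cref{def_reflection} that non-reflecting-$\eye$ pairs correspond to walks staying on one side of $w=0$ after step two --- set things up so that $f(M)$ is exactly (a multiple of) $\co_M(1,2) - \e\co_M(1,2)$ up to this controlled error, perhaps by choosing $K$ and absorbing $g$-terms. (4) Bound $v_f(M) = \tfrac12\e[|f(M)-f(\tM)|\,|F(M,\tM)|\mid M]$: since $f(M)-f(\tM) = K^{-1}F(M,\tM)\cdot(\pm 1)$ is $\pm 1$-valued (times $K$... let me keep the rescaling straight), $|f(M)-f(\tM)|\,|F| = K\,\un(\text{reflecting at }(J_1,J_2))$, whose conditional expectation is $K/n^2$ times the number of reflecting pairs $\asymp \ex_M(1,2)^2 = (d-\co_M(1,2))^2$. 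Bounding $(d-\co_M(1,2))^2 \le d^2$, or more sharply comparing to $\hat p^2 n^2$ using $d-\co \le \min(d, n-d)$-type bounds (needed to get $\hat p$ rather than $p$ in the final exponent), gives $v_f(M)\le K_1 + K_2 f(M)$ with $K_1\asymp \hat p^2 n$ after choosing $K\asymp 1$. Then \Cref{thm_chatterjee} yields $\pr(f(M)\ge t)\le \exp(-t^2/(2(K_1+K_2 t)))$, and setting $t = \eps \hat p^2 n$ (times the rescaling) produces \eqref{cotailup} with constants $4 + 2\eps$ after tracking $K_1 = 2\hat p^2 n$, $K_2 = 1$.

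\textbf{Main obstacle.} The delicate point is step (2)--(3): because the two leading terms (minor $=\kay$ versus minor $=\eye$) have \emph{identical} counts $(d-\co_M(1,2))^2$, the drift $f(M)$ is not literally proportional to $\co_M(1,2)$ --- it is carried entirely by the lower-order discrepancy between ``reflecting pairs with minor $\eye$'' and ``all pairs with minor $\eye$'', i.e.\ by the Catalan-type count $g(M)$ of walks that hit $+1$ at step $1$ and never return. I expect the real proof either proves a clean identity $f(M) = (\text{const})\cdot(\co_M(1,2) - c_n)$ by a bijective/walk argument showing reflecting-$\kay$ pairs and reflecting-$\eye$ pairs also have equal counts (so $f\equiv 0$?? --- that would be too strong, so more likely the asymmetry is genuine), or it absorbs $g(M)$ into the variance proxy via an inequality like $g(M)\le \ex_M(1,2)^2$ and then optimizes. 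Either way, the challenge is getting a \emph{lower} bound on the drift towards the mean --- i.e.\ verifying that $f$ really does push $\co_M(1,2)$ down when it is large --- which is exactly the content of the non-reflecting-$\eye$ pairs being ``rare'' relative to the abundance of reflecting pairs. I would expect the author to handle this with a direct walk-counting estimate, deferring part of it (the Catalan bound) to \Cref{lem_gbound} as foreshadowed, and then the concentration conclusion follows mechanically from \Cref{thm_chatterjee}.
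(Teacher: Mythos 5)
Your step (2) contains a miscount that breaks the structure of the argument. For the minor $\hat M = \kay = \left(\begin{smallmatrix}1&0\\1&0\end{smallmatrix}\right)$ one needs column $J_1$ to read $\binom{1}{1}$ and column $J_2$ to read $\binom{0}{0}$, so the number of such ordered pairs is $\co_M(1,2)\cdot\big(n-2d+\co_M(1,2)\big)$, not $\ex_M(1,2)\,\ex_M(2,1)=(d-\co_M(1,2))^2$. (The latter is the count for $\jay = \left(\begin{smallmatrix}0&1\\1&0\end{smallmatrix}\right)$, the matrix relevant to the \emph{simple switching} coupling, which you appear to have conflated with $\kay$.) With the correct count, the ``main term''
\[
g(M) := n\big[\pr(\hat M=\kay\mid M)-\pr(\hat M=\eye\mid M)\big]
= \frac{\co_M(1,2)(n-2d+\co_M(1,2))-(d-\co_M(1,2))^2}{n} = \co_M(1,2)-p^2n
\]
does not vanish: it is \emph{exactly} the centered codegree. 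This is the whole point of the reflection coupling --- it pairs configurations whose counts genuinely differ, unlike the simple switching which pairs $\eye$ with $\jay$ (whose counts are identical). Your conclusion that ``the naive leading term cancels'' and the drift is carried entirely by non-reflecting pairs inverts the roles: the non-reflecting-$\eye$ count $b(M)$ is the \emph{error} term, not the signal, and for this \emph{upper}-tail bound the only fact about it one needs is $b(M)\ge 0$, which gives $\co_M(1,2)-p^2n\le f(M)$ pointwise so that $\pr(\co_M-p^2n\ge t)\le\pr(f(M)\ge t)$. The Catalan-type quantitative bound on $b(M)$ you anticipate is genuinely needed, but only later (\Cref{lem_gbound} in Section 6), for the lower tail.

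Two smaller issues. First, your sign is reversed: on the reflecting event one has $\co_M(1,2)-\co_{\tM}(1,2)=\un(\hat M=\kay)-\un(\hat M=\eye)$, since a $\kay$-minor loses a common neighbor under reflection and an $\eye$-minor gains one. Second, in bounding $v_f$ you cannot treat $f(M)-f(\tM)$ as $\pm 1$-valued times a constant: since $f(M)=\co_M(1,2)-p^2n+\tfrac1n b(M)$, one must also control $|b(M)-b(\tM)|$, which the paper does via the locality bound $|b(M)-b(\tM)|\le 2\ex_M(1,2)\un_\event\le 2\hat d\,\un_\event$ (using $\ex_M(1,2)\le\hat d$). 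Once these are fixed, the rest of your sketch (choice of constants $K_1\asymp\hat p^2 n$, $K_2\asymp 1$, then \Cref{thm_chatterjee} and the scaling $t=\eps\hat p^2 n$) does match the paper's route.
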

\begin{remark}[Comparison to the \er case] 
Up to constants in the exponential, this matches the upper tail for the \er digraph given in \eqref{erco}.
\end{remark}

\begin{remark}
As a corollary one may obtain some control on edge discrepancy by applying the above proposition (with a union bound over pairs of vertices) with \Cref{lem_pseudo}. This will only be effective when $d=\omega(\sqrt{n})$ and for pairs of sets $A,B$ with $\max(|A|,|B|)\gg n$, and is hence inferior to \Cref{cor_discrep}. 
\end{remark}

\begin{proof}
We will apply \Cref{thm_chatterjee} and the reflection coupling of \Cref{lem_reflection}.

We first note the trivial deterministic bounds
\begin{equation}		\label{whatid}
d\ge \co_M(i_1,i_2) \ge \max(0, 2d-n).
\end{equation}
The lower bound is equivalent to 
\begin{equation}	\label{exub}
\ex_M(i_1,i_2)\le \min(d,n-d) = \hat{d}
\end{equation}
which can be seen from the obvious bound 
$$\ex_M(i_1,i_2)= \big| \mN_M(i_1)\setminus \mN_M(i_2)\big| \le \big|\mN_M(i_1)\big|=d$$ 
and the fact that $\ex_M(i_1,i_2)=\ex_{M'}(i_1,i_2)\le n-d$ (where $M'$ is the adjacency matrix of the complementary digraph $\Gamma'$).

Since the rows of $M$ are exchangeable we may take $(i_1,i_2)=(1,2)$. 
Let us abbreviate 
$$\co(M):= \co_M(1,2).$$
We construct a coupled pair $(M,\tM)$ of rrd matrices as follows: letting $M$ be an rrd matrix and $J_1,J_2$ be iid uniform random elements of $[n]$, independent of $M$, we set
\begin{equation}
\tM=\Psi_{(J_1,J_2)}(M).
\end{equation}
Then $(M,\tM)$ is an exchangeable pair of rrd matrices by \Cref{lem_reflection}. 
We denote the sampled $2\times 2$ minor of the first two rows by
$$\hat{M}:=M_{(1,2)\times(J_1,J_2)}.$$

Define the antisymmetric function $F(M_1,M_2)=n\big(\co(M_1)-\co(M_2)\big)$ on $\mM_{n,d}\times \mM_{n,d}$. 
Recall the notation	
$$\eye:=\mat[1]{0}{0}{1},\quad\quad \jay:=\mat[0]{1}{1}{0}, \quad\quad \kay:= \mat[1]{0}{1}{0}.$$
Defining
$$\event =\set{(J_1,J_2)\mbox{ is reflecting}}$$
we have 
$$\big\{\hat{M}=\kay\big\} \subset \event \subset \big\{\hat{M}=\kay\big\}\vee \big\{\hat{M}=\eye\big\}$$ 
(see the discussion under \Cref{def_reflection}), and
\begin{equation}
F(M,\tM)=n\big(\un(\hat{M}=\kay)-\un(\hat{M}=\eye)\un_\event\big).
\end{equation}
Hence
\begin{align*}
f(M) &:= \e\left(F(M,\tM)\big|M\right) \\
&= n\Big[\pro{\hat{M}=\kay\big|M} - \pro{\set{\hat{M}=\eye}\wedge \event \big|M}\Big]\\
&= g(M)+ \frac{1}{n}b(M)
\end{align*}
where we define the ``main term"
\begin{equation}
g(M):=n\big[\pr(\hat{M}=\kay|M) - \pr(\hat{M}=\eye|M)\big]
\end{equation}
and the ``error term"
\begin{align}
\frac{1}{n}b(M) &:= n\,\pro{\set{\hat{M}=\eye}\wedge \event^c\,\big|\,M}\notag \\
&= \frac{1}{n}\left|\big\{(j_1,j_2)\in \Ex_M(1,2)\times \Ex_M(2,1): (j_1,j_2)\mbox{ not reflecting}\big\}\right|.\label{gdef}
\end{align}
Let us call a pair $(j_1,j_2)\in [n]^2$ ``bad" if $(j_1,j_2)\in  \Ex_M(1,2)\times \Ex_M(2,1)$ and $(j_1,j_2)$ is not reflecting.
In other words, $(j_1,j_2)$ is bad if it satisfies conditions (1) and (2) from \Cref{def_reflection} but not (3).
We have
\begin{equation}		\label{badpairs}
b(M) = \left|\big\{\mbox{bad } (j_1,j_2)\in [n]^2 \big\}\right|.
\end{equation}
Using the identities \eqref{id23}, \eqref{id4}, we see that the main term $g(M)$ is simply a shift of $\co(M)$:
\begin{align*}
g(M) &= n\left(\frac{\co(M)}{n}\frac{n-2d+\co(M)}{n} - \frac{(d-\co(M))^2}{n^2} \right)\\
&= \co(M)-\pe^2n.
\end{align*}
Hence, if we can show that the number of bad pairs $b(M)$ is small, then we can deduce tail bounds for $\co(M)$ around the value $\pe^2n$ from tail bounds for $f(M)$.

To deduce a tail bound for $f(M)$ from \Cref{thm_chatterjee}, we must bound the quantity
\begin{align*}
v_f(M) &:= \frac12 \e\left[ |f(M)-f(\tM)||F(M,\tM)|\Big|M\right] \\
&\le \frac{n}{2} \e\left[ |\co(M)-\co(\tM)|^2\Big| M\right]  + \frac{1}2 \e\left[ |\co(M)-\co(\tM)||b(M)-b(\tM)|\Big| M\right]
\end{align*}
and so we need to control the expressions $|\co(M)-\co(\tM)|$ and $|b(M)-b(\tM)|$. 

Now $\co(M)-\co(\tM)=\un(\hat{M}=\kay)-\un(\hat{M}=\eye)\un_\event$, and since these events are disjoint,
\begin{align*}
|\co(M)-\co(\tM)| &= \un(\hat{M}=\kay)+\un(\hat{M}=\eye)\un_\event \\
&\le \un(\hat{M}=\kay)+\un(\hat{M}=\eye).
\end{align*}

Since the map $\Psi_{(J_1,J_2)}$ only alters the columns indexed by $J_1,J_2$, it follows that at most $2\ex_M(1,2)$ pairs $(j_1,j_2)\in \Ex_M(1,2)\times \Ex_M(2,1)$ either become or cease to be reflecting under the application of $\Psi_{(J_1,J_2)}$, whence
\begin{align}		\label{gbound1}
|b(M)-b(\tM)| &\le 2\ex_M(1,2)\un_\event  \\
& \le 2\hat{d}\un_\event
\end{align}
where we used \eqref{exub} in the second line.
Combining these bounds with the identities \eqref{id23}-\eqref{id4}, 
\begin{align}
v_f(M) &\le  \frac{n}{2} \e\left[ |\co(M)-\co(\tM)|^2\Big| M\right]   + \hat{d}\e\left[ |\co(M)-\co(\tM)|\Big| M\right]	\notag\\
&\le \left(\frac{n}{2}+\hat{d}\right)\Big[\pr(\hat{M}=\kay|M)+\pr(\hat{M}=\eye|M)\Big]	\notag\\
&\le n\left[\frac{\co(M)(n-2d+\co(M))}{n^2}+ \frac{(d-\co(M))^2}{n^2}\right] 	\notag\\
&= \co(M)-\frac{d^2}{n}+ \frac{2}{n}(d-\co(M))^2 	\notag\\
&\le f(M)+\frac{2}{n}\hat{d}^2		\label{vfbound}
\end{align}
where in the last line we used \eqref{exub} and
\begin{align*}
\co(M) - \frac{d^2}{n}  &= g(M) \\
&= f(M) -\frac{1}{n} b(M) \\
& \le f(M)
\end{align*}
since $b(M)\ge 0$.
Applying \Cref{thm_chatterjee} with constants
$$K_1= \frac{2}{n}\hat{d}^2, \quad K_2=1,$$
we conclude that for any $t\ge 0$, 
\begin{align*}
\pro{\co(M) - \pe^2 n \ge t} &\le \pro{\co(M)-\pe^2n+ \frac{1}{n}b(M)\ge t} \\
&= \pro{f(M)\ge t} \\
&\le \expo{ - \frac{ t^2/2}{\frac{2}{n}\hat{d}^2 + t}}
\end{align*}
where we again used that $b(M)\ge 0$. 
The terms in the denominator are balanced by scaling $t=\eps \hat{\pe}^2n$, where we recall
\begin{equation}
\hat{\pe}:=\hat{d}/n = \frac{1}{n}\big(\min(d,n-d)\big)
\end{equation}
giving the desired bound
\begin{align*}
\pro{\co(M) - \pe^2 n \ge \eps \hat{\pe}^2 n} \le \expo{-\frac{\eps^2}{4+2\eps} \hat{\pe}^2 n}.
\end{align*}
\end{proof}

\section{Uniform control on codegrees}		\label{sec_codeg2}

In this section we complete the proof of part (1) of \Cref{thm_main}.

In the previous section, we could pass from control on the upper tail of $f(M)$ to control on the upper tail of 
$$\co(M)=\pe^2n+  f(M)-\frac1n b(M) $$
using the fact that the number of bad pairs $b(M)$ (defined in \eqref{badpairs}) is non-negative. 
In order to control the lower tail of $\co(M)$, we will need to improve on the trivial upper bound 
$$b(M)\le |\ex_M(1,2)|^2\le \hat{d}^2$$
(from monotonicity and \eqref{exub}).
In this section we show that $b(M)\le \eps \hat{d}^2$ with high probability for $\eps>0$ small.
A key ingredient will be the control on the upper tail of the codegrees obtained in the previous section.

Part (1) of \Cref{thm_main} follows from substituting $\eps=\eta\frac{\max(p,1-p)}{\min(p,1-p)}$ in the following proposition.

\begin{proposition}[Uniform bounds on codegrees]		\label{thm_codeg}
For any $\eps\ge 0$, 
\begin{equation}		\label{cocontrol}
\pro{\exists  \set{i_1,i_2}\subset [n]: \; \left| \harp{\co}_\Gamma(i_1,i_2)-\frac{d^2}{n}\right|\ge \eps\frac{\hat{d}^2}{n}} \le C_1 n^2\hat{d}^2\expo{-c\eps \hat{d}}+ C_2 n^2\expo{-\frac{c\eps^2}{1+\eps}\frac{\hat{d}^2}{n}}
\end{equation}
where $\hat{d}=\min(d, n-d)$, and $C_1,C_2, c>0$ are absolute constants. If $\eps\ge 1$ we may take $C_1=0$. 

\end{proposition}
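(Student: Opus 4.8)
The plan is to control the upper and lower deviations of each $\co_M(i_1,i_2)$ separately and then take a union bound over the $\binom{n}{2}$ pairs $\{i_1,i_2\}$. The upper tail is immediate: \Cref{prop_codeg} already gives, for each distinct pair, $\pr\{\co_M(i_1,i_2)-\pe^2n\ge\eps\hat{\pe}^2n\}\le\expo{-\frac{\eps^2}{4+2\eps}\hat{\pe}^2n}$, and since $\hat{\pe}^2n=\hat{d}^2/n$ this contributes, after a union bound, exactly the term $C_2n^2\expo{-c\frac{\eps^2}{1+\eps}\frac{\hat{d}^2}{n}}$. For $\eps\ge1$ the deterministic bounds $\co_M(i_1,i_2)\ge\max(0,2d-n)$ from \eqref{whatid} show the lower deviation event $\{\co_M(i_1,i_2)\le\pe^2n-\eps\hat{\pe}^2n\}$ is essentially vacuous, which is why one may take $C_1=0$; so assume henceforth $\eps<1$.

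For the lower tail, fix a pair and take $(i_1,i_2)=(1,2)$ by row-exchangeability, keeping all notation of \Cref{sec_codeg1}: with the reflection coupling one has $f(M)=g(M)+\frac1n b(M)$, $g(M)=\co(M)-\pe^2n$, and $v_f(M)\le f(M)+\frac2n\hat{d}^2$, so \Cref{thm_chatterjee} with $K_1=\frac2n\hat{d}^2$, $K_2=1$ gives $\pr(f(M)\le-t)\le\expo{-\frac{t^2n}{4\hat{d}^2}}$. Since $\co(M)-\pe^2n=f(M)-\frac1n b(M)$ and $b(M)\ge0$, a deviation $\co(M)-\pe^2n\le-\eps\hat{d}^2/n$ together with $b(M)\le\frac\eps2\hat{d}^2$ forces $f(M)\le-\frac\eps2\hat{d}^2/n$; hence
\begin{equation*}
\pr\big(\co(M)-\pe^2n\le-\eps\tfrac{\hat{d}^2}{n}\big)\;\le\;\expo{-\tfrac{\eps^2\hat{d}^2}{16n}}\;+\;\pr\big(b(M)>\tfrac{\eps}{2}\hat{d}^2\big).
\end{equation*}
Everything thus reduces to the bound $\pr(b(M)>\frac\eps2\hat{d}^2)\lesssim\hat{d}^2\expo{-c\eps\hat{d}}+\hat{d}^2\expo{-c\eps^2\hat{d}^2/n}$ (the content of \Cref{lem_gbound}), after which a union bound over the pairs finishes the proof.

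To estimate $\pr(b(M)>\frac\eps2\hat{d}^2)$, recall $b(M)$ counts the pairs $(j_1,j_2)\in\Ex_M(1,2)\times\Ex_M(2,1)$ that are not reflecting, and $|\Ex_M(1,2)|=\ex_M(1,2)\le\hat{d}$ deterministically by \eqref{exub}. The mechanism: condition on the first two rows of $M$ and on the restriction of $M$ to rows $\ge3$ in all columns other than $j_1,j_2$; then $\co_{M^\tran}(j_1,j_2)$ is already determined, and the only remaining freedom in columns $j_1,j_2$ is a uniformly random placement, among the rows where exactly one of the two columns has a $1$, of $\ell_{j_1,j_2}:=d-1-\co_{M^\tran}(j_1,j_2)$ "$+1$'' steps and $\ell_{j_1,j_2}$ "$-1$'' steps. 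By the reflection principle, the conditional probability that the associated walk never returns to $+1$ over rows $3,\dots,n$ — i.e. that $(j_1,j_2)$ is bad — is exactly $1/(\ell_{j_1,j_2}+1)$. Since $M^\tran$ is again an rrd matrix, applying \Cref{prop_codeg} to it together with a union bound over the at most $\hat{d}^2$ relevant column-pairs shows that, off an event of probability $O(\hat{d}^2\expo{-c\eps^2\hat{d}^2/n})$, one has $\co_{M^\tran}(j_1,j_2)\le(1+c_0\eps)\pe^2n$ for all of them, whence $\ell_{j_1,j_2}\ge\hat{d}/4$ (after reducing to $d\le n/2$ by complementation, which is harmless) and each bad-pair probability is $\le4/\hat{d}$. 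Grouping $b(M)=\sum_{j_1\in\Ex_M(1,2)}b_{j_1}(M)$ by the first column, this yields $\e[b_{j_1}(M)\mid\text{rows }1,2]\le\hat{d}\cdot(4/\hat{d})=O(1)$, a \emph{constant}; promoting this bounded conditional mean to an exponential tail $\pr(b_{j_1}(M)>\frac\eps4\hat{d})\le\expo{-c\eps\hat{d}}$ and union bounding over the $\le\hat{d}$ values of $j_1$ then gives $b(M)\le\frac\eps2\hat{d}^2$ off an event of the required size. (For $\eps\ge1$ the deterministic bound $b(M)\le\hat{d}^2$ makes the first term unnecessary.)

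The routine ingredients here are the two union bounds and the inputs from \Cref{prop_codeg} and \Cref{thm_chatterjee}; the real work — and the main obstacle — is the estimate on $\pr(b(M)>\frac\eps2\hat{d}^2)$. Two points require care. First, arranging the conditioning so that the reflection-principle identity $1/(\ell_{j_1,j_2}+1)$ is literally exact and $\co_{M^\tran}(j_1,j_2)$ is measurable with respect to what one conditions on; this is delicate because distinct column-pairs overlap, so a single conditioning cannot serve all of them simultaneously, and one must be careful about the order in which the global event $\{\co_{M^\tran}(j_1,j_2)\le(1+c_0\eps)\pe^2n\ \forall (j_1,j_2)\}$ is peeled off. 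Second, upgrading the bounded first moment of $b_{j_1}(M)$ to a genuine exponential tail: the bad-pair indicators are all built from the one fixed column $j_1$ and are coupled through the column-sum constraints, so one needs negative-dependence inequalities for the entries of a uniform contingency table (equivalently, a further switching argument on the columns of $\Ex_M(2,1)$, or a moment computation) to obtain Chernoff-type concentration rather than the merely polynomial bound that Markov's inequality on the first moment would give.
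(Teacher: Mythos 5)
Your high-level plan is right: upper tail from \Cref{prop_codeg} plus a union bound, lower tail by combining the Chatterjee bound on $f(M)$ with an upper bound on $b(M)$, reducing everything to showing $b(M)\le\frac\eps2\hat{d}^2$ with high probability. You also correctly identify the reflection-principle calculation $1/(\ell_{j_1,j_2}+1)$ for the conditional bad-pair probability and the role of codegree control on $M^\tran$ in keeping $\ell_{j_1,j_2}\gtrsim\hat d$. Where the proposal breaks down is exactly where you flag uncertainty: the step from a bounded conditional first moment of $b_{j_1}(M)$ to an exponential tail. Your decomposition $b(M)=\sum_{j_1\in\Ex_M(1,2)}b_{j_1}(M)$ groups together all pairs sharing a fixed column $j_1$; the indicators inside one $b_{j_1}$ all depend on that common column, and distinct $j_2$'s share rows through the column-sum constraint, so they are genuinely dependent. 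You do not supply the negative-dependence or switching argument you invoke, and it is not at all clear that a first-column grouping admits one. Markov on the first moment only gives a polynomial bound, which is far short of the $e^{-c\eps\hat d}$ needed for the $C_1$ term.

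The paper sidesteps this entirely by a different decomposition: it enumerates $\Ex_M(1,2)=\{j_1^+<\dots<j_k^+\}$ and $\Ex_M(2,1)=\{j_1^-<\dots<j_k^-\}$ and writes $b(M)=\sum_{s=0}^{k-1}b_s(M)$, where $b_s$ counts bad pairs along the cyclic diagonal $Q_s=\{(j_m^+,j_{m+s\bmod k}^-):m\in[k]\}$. For fixed $s$ the pairs in $Q_s$ form a perfect matching between $\Ex_M(1,2)$ and $\Ex_M(2,1)$, so they live on \emph{disjoint} column-pairs. Conditioning on everything outside the union of the corresponding submatrices $M_{\mI(j^+,j^-)\times(j^+,j^-)}$ fixes all their row and column sums, so these submatrices can be resampled independently; the bad-pair indicators inside one $b_s$ then really are independent, Bernstein applies, and one finishes with pigeonhole over the $k\le\hat d$ shifts $s$. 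The single conditioning also serves every pair in $Q_s$ simultaneously and makes the reflection-principle probability exact, resolving the first delicacy you raised. That diagonal matching, not the first-column grouping, is the combinatorial idea your proposal is missing, and without it the exponential tail on $b(M)$ is not established.
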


\begin{remark}
For $\eps\in (0,1)$ fixed independent of $n$ and $\hat{d}=\omega(\log n)$ the first term on the right hand side of \eqref{cocontrol} is of lower order. 
The second term matches the bound \eqref{ercocontrol} for \er digraphs, up to the constants in the exponential. 
\end{remark}

\begin{proof}

For $i_1,i_2\in [n]$ distinct, define
\begin{equation}
b_{(i_1,i_2)}(M)= \Big|\Big\{(j_1,j_2)\in \Ex_M(i_1,i_2)\times \Ex_M(i_2,i_1): (j_1,j_2)\mbox{ not reflecting}\Big\}\Big|
\end{equation}
so that in the notation of \eqref{gdef} we have $b(M)=b_{(1,2)}(M)$. 
By row-exchangeability it suffices to get control on $b_{(1,2)}(M)$ and apply a union bound over all $(i_1,i_2)\in [n]^2$.

\begin{lemma}		\label{lem_gbound}
For any $\lambda>0$, 
$$\pr\Big\{\, b_{(1,2)}(M)\ge \lambda\hat{d}  \,\Big\} \ll  \hat{d}^2 \expo{-c\hat{d}} + \hat{d} e^{-c\lambda}.$$
\end{lemma}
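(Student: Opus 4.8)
The plan is to compute, for each fixed pair of column indices $(j_1,j_2)$, the conditional probability that $(j_1,j_2)$ is bad given enough of $M$, reduce this to a ballot-type count, and then assemble the estimate using \Cref{prop_codeg} (applied to $M^\tran$) together with two pigeonhole reductions.

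\textbf{Step 1 (a conditional identity via the reflection principle).} Fix distinct $j_1,j_2\in[n]$ and condition on the $\sigma$-algebra $\mF_{(j_1,j_2)}$ generated by all columns of $M$ other than $j_1,j_2$, by the first two rows of $M$, and, for each row $k\in[3,n]$, by the \emph{type} of $(M(k,j_1),M(k,j_2))$ --- whether it equals $(1,1)$, $(0,0)$, or is ``one--each''. Since the row sums, and the sums of columns $j_1,j_2$, are then forced, every filling of the one--each rows of $[3,n]$ by $(1,0)$'s and $(0,1)$'s with the forced count yields a valid element of $\mM_{n,d}$; hence conditionally on $\mF_{(j_1,j_2)}$ this filling is uniform over all $\binom{2s-2}{s-1}$ possibilities, where $s:=\ex_{M^\tran}(j_1,j_2)=d-\co_{M^\tran}(j_1,j_2)$. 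On the ($\mF_{(j_1,j_2)}$-measurable) event that $j_1\in\Ex_M(1,2)$ and $j_2\in\Ex_M(2,1)$, rows $1,2$ contribute the steps $+1,-1$ and the minor is $\eye$, so by the discussion after \Cref{def_reflection} the pair is bad exactly when the walk assembled from the random filling (started from $0$ after row $2$) never exceeds $0$; by the reflection principle this holds for precisely $C_{s-1}=\tfrac1s\binom{2s-2}{s-1}$ of the fillings. Therefore
\[
\pr\!\big[(j_1,j_2)\text{ bad}\mid\mF_{(j_1,j_2)}\big]=\frac{\un\big(j_1\in\Ex_M(1,2),\ j_2\in\Ex_M(2,1)\big)}{\ex_{M^\tran}(j_1,j_2)},
\]
the numerator being nonzero forcing $\ex_{M^\tran}(j_1,j_2)\ge1$.

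\textbf{Step 2 (controlling the denominator from below).} Applying \Cref{prop_codeg} to $M^\tran$ (again a uniform rrd matrix) with $\eps$ of order $n/\hat d$, chosen so that $p^2n+\eps\,\hat{\pe}^2 n$ is a suitable constant multiple of $\hat d$ lying below $d$, one gets that for each fixed pair $\ex_{M^\tran}(j_1,j_2)\ge c_0\hat d$ for an absolute $c_0>0$, except with probability $e^{-c\hat d}$.

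\textbf{Step 3 (pigeonhole reductions and assembly).} Write $b_{(1,2)}(M)=\sum_{j_1\in\Ex_M(1,2)}Y_{j_1}$ with $Y_{j_1}:=\#\{j_2\in\Ex_M(2,1):(j_1,j_2)\text{ bad}\}$. Since $|\Ex_M(1,2)|=\ex_M(1,2)\le\hat d$ by \eqref{exub}, the event $\{b_{(1,2)}(M)\ge\lambda\hat d\}$ forces $Y_{j_1}\ge\lambda$ for some $j_1\in\Ex_M(1,2)$; a union bound over $j_1\in[n]$, together with $\pr\{j_1\in\Ex_M(1,2)\}=\pr\{M(1,j_1)=1,M(2,j_1)=0\}\le\hat d/n$ and column--exchangeability, gives
\[
\pr\{b_{(1,2)}(M)\ge\lambda\hat d\}\ \le\ \hat d\cdot\pr\{\,Y_{j_1}\ge\lambda\mid j_1\in\Ex_M(1,2)\,\},
\]
so it remains to prove $\pr\{Y_{j_1}\ge\lambda\mid j_1\in\Ex_M(1,2)\}\ll\hat d\,e^{-c\hat d}+e^{-c\lambda}$. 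For this I would condition on rows $1,2$ and on column $j_1$, discard (via Step 2 and a union bound over the $\le\hat d$ relevant $j_2$) the event that some $\ex_{M^\tran}(j_1,j_2)<c_0\hat d$, which costs $\hat d\,e^{-c\hat d}$, and on its complement use Step 1 to see that $Y_{j_1}$ is a sum of $\le\hat d$ indicators of conditional mean $O(1/\hat d)$, so $\e[Y_{j_1}\mid\cdot]=O(1)$; a Chernoff-type bound $\pr\{Y_{j_1}\ge\lambda\mid\cdot\}\ll e^{-c\lambda}$ then follows provided the indicators $\un((j_1,j_2)\text{ bad})$ are, for the purpose of exponential moments, no worse than independent. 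This last point is plausible because ``$(j_1,j_2)$ bad'' is a monotone increasing event in the columns $\{X_{j_2}\}$ of the fixed-margin submatrix on rows $[3,n]$ (flipping a $0$ of column $j_2$ at a row $\ge3$ only lowers the associated walk), and the entries of a uniform $0/1$ matrix with prescribed margins are negatively associated.

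\textbf{Main obstacle.} The first-moment content of Step 1 is clean, but upgrading it to the exponential tail in $\lambda$ for the count $Y_{j_1}$ --- equivalently for $b_{(1,2)}(M)$ --- forces one to control the dependence among the events $\{(j_1,j_2)\text{ bad}\}$ and to arrange the various conditionings (on $\mF_{(j_1,j_2)}$ in Step 1, on rows $1,2$ and column $j_1$ in Step 3) so that the discarded events stay within the $\hat d^2 e^{-c\hat d}$ budget. The negative-association route sketched above is, I expect, the cleanest way to close this gap; a direct moment computation bounding $\e[Y_{j_1}^k]$ would be an alternative.
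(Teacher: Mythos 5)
Your Steps 1 and 2 are sound and closely track the paper's internal argument. Step 1 (conditioning on everything but the assignment of $(1,0)$/$(0,1)$ to the one--each rows of columns $(j_1,j_2)$, and using the reflection principle to see that exactly $\tfrac1s\binom{2(s-1)}{s-1}$ of the $\binom{2(s-1)}{s-1}$ uniform fillings give a non-crossing walk) is precisely the Catalan computation in the paper's Claim~5.3. Step 2 (applying \Cref{prop_codeg} to $M^\tran$ with $\eps\asymp n/\hat d$ to get $\ex_{M^\tran}(j_1,j_2)\ge c_0\hat d$ up to probability $e^{-c\hat d}$) matches the paper's good event $\good_s$.

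The gap is exactly where you flag it, in Step~3, and the proposed fix does not work. The assertion that the entries of a uniform $0/1$ matrix with prescribed margins are negatively associated is false: already for a uniform $3\times3$ permutation matrix, $\pr(X_{11}=1,X_{22}=1)=1/6>1/9=\pr(X_{11}=1)\pr(X_{22}=1)$, so two entries in disjoint rows and columns are \emph{positively} correlated, which falsifies NA. The difficulty is intrinsic to your decomposition $b_{(1,2)}(M)=\sum_{j_1}Y_{j_1}$: every indicator $\un((j_1,j_2)\ \mathrm{bad})$ contributing to $Y_{j_1}$ involves the \emph{same} column $X_{j_1}$, and an atypical realization of $X_{j_1}$ can simultaneously push many of the walks $w_{(j_1,j_2)}$ into the non-crossing regime, so there is no reason to expect sub-Poissonian tails for $Y_{j_1}$ without significant further work (your suggested moment computation $\e[Y_{j_1}^k]$ would have to confront this dependence head on).

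The paper's proof sidesteps the shared-column dependence by choosing a different decomposition. Enumerating $\Ex_M(1,2)=\{j_1^+<\dots<j_\x^+\}$ and $\Ex_M(2,1)=\{j_1^-<\dots<j_\x^-\}$, one writes $b(M)=\sum_{s=0}^{\x-1}b_s(M)$ where $b_s(M)$ counts bad pairs along the ``diagonal'' matching $Q_s(M)=\{(j_m^+,j_{m+s}^-):m\in[\x]\}$ (indices $\bmod\ \x$). The point is that each $Q_s$ is a perfect matching, so its pairs use pairwise-disjoint columns; one then builds an exchangeable pair $(M,\tM)$ by independently and uniformly resampling, for each $(j^+,j^-)\in Q_s(M)$, the sub-matrix $M_{\mI(j^+,j^-)\times(j^+,j^-)}$ on the one--each rows, conditionally on the rest of $M$. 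Because these sub-matrices are column-disjoint and their internal margins are fixed by the conditioning, the resampling is a genuine product measure, so $b_s(\tM)\,|\,M$ is a sum of \emph{independent} indicators each of conditional mean $O(1/\hat d)$ on the good event; Bernstein then gives $\pr\{b_s(\tM)\ge\lambda\,|\,M\}\ll e^{-c\lambda}$, which transfers to $b_s(M)$ by exchangeability. Finally $\x\le\hat d$ and pigeonhole (if $b(M)\ge\lambda\hat d$ then some $b_s(M)\ge\lambda$) plus a union bound over $s$ yield the claimed $\hat d^2 e^{-c\hat d}+\hat d e^{-c\lambda}$. So: your first-moment reduction and the codegree input are right, but the key missing idea is to group the bad pairs along matchings rather than by fixed $j_1$, precisely to manufacture the conditional independence that your NA step was trying to substitute for.
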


\begin{proof}

Defining the subsets of $[n]^2$
\begin{align*}
Q(M) &:= \Ex_M(1,2)\times \Ex_M(2,1),\\
B(M) &:= \set{(j_1,j_2) \mbox{ not reflecting}}
\end{align*}
we have 
$$b(M)= |Q(M)\cap B(M)|.$$
Denote
$$\x= |\Ex_M(1,2)|=|\Ex_M(2,1)|.$$
Now we decompose $b(M)$ as a sum of $\x$ terms, each of which can be expressed as a sum of independent indicators. 
We enumerate the elements of $\Ex_M(1,2), \Ex_M(2,1)$ in increasing order as $j^+_1<\dots<j^+_{\x}$ and $j^-_1<\dots<j^-_{\x}$, respectively. 
For each $s\in [0, \x-1]$, define
$$Q_s(M)=\set{(j^+_m,j^-_{m+s}): m\in [\x]}$$
with the sum $m+s$ understood to be mod $\x$, and put
$$b_s(M)=|Q_s(M)\cap B(M)|$$
so that 
\begin{equation}	\label{gdecomp}
b(M)=\sum_{s=0}^{\x -1} b_s(M).
\end{equation}

Fix $0\le s\le \x-1$. 
We now construct an exchangeable pair $(M,\tM)$ by resampling a certain subset of the entries of $M$.
For each element $(j^+,j^-)\in Q_s(M)$ write 
\begin{align*}
\mI(j^+,j^-) &= [3,n]\cap\big(\Ex_{M^\tran}(j^+,j^-)\cup \Ex_{M^\tran}(j^-,j^+)\big)\\
&= \big\{i\in [3,n]: M(i,j^+)+M(i,j^-)=1\big\}.
\end{align*}
We form the pair $(M,\tM)$ by first drawing $M\in \mM_{n,d}$ uniformly, then forming $\tM$ by independently and uniformly resampling the $k$ sub-matrices
\begin{equation}	\label{submats}
\set{ \tM_{\mI(j^+,j^-)\times(j^+,j^-)}}_{(j^+,j^-)\in Q_s(M)}
\end{equation}
conditional on all other entries of $M$.
We can do this resampling independently since our conditioning has already fixed all of the row and column sums of each of these sub-matrices.  
For exchangeability it is important to note that $Q_s(M)=Q_s(\tM)$, as this set is determined by the first two rows of $M$, which are not resampled.

We will restrict to an event on which we have an upper bound on codegrees. 
Let 
\begin{equation}
\good_{s} = \bigwedge_{(j^+,j^-)\in Q_s(M)}\set{\co_{M^\tran}(j^+, j^-) \le \left(\frac{1+p}2\right)d}
\end{equation}
enforcing a slight improvement on the deterministic upper bound $\co_{M^\tran}(j^+,j^-)\le d$. 
By a union bound and \Cref{prop_codeg} (taking $\eps$ to be a small multiple of $n/\hat{d}$) we have
\begin{equation}
\pro{ \good_s} \ge 1- \x e^{-c\hat{d}} \ge 1-\hat{d}e^{-c\hat{d}}.
\end{equation}
Note that $\good_s$ holds for $M$ if and only if it holds for $\tM$, since the resampling does not change the value of 
$\co_{M^\tran}(j^+,j^-)$ for any $(j^+,j^-)\in Q_s(M)$.

Conditional on $M$, from the joint independence of the sub-matrices \eqref{submats} we see that $b_s(\tM)$ is a sum of independent indicators. 
Hence, we can control the upper tail of $b_s(\tM)$ using Bernstein's inequality, once we have estimates on $\e \big[b_s(\tM)\big| M\big]$. 
We will then deduce the desired bound on $b(M)$ through the decomposition \eqref{gdecomp} and a union bound.

To estimate $\e \big[b_s(\tM)\big| M\big]$ we have the following
\begin{claim}		\label{claim_bad}
For each $s\in [0,\x-1]$ 
and $(j^+,j^-)\in Q_s(M)$, 
\begin{equation}	\label{indicatorp}
\pr\big[(j^+,j^-)\in B(\tM)\big|M\big]\un_{\good_s}\ll1/\hat{d}.
\end{equation}

\end{claim}

Let us assume this claim for now. 
Restricting to $\good_s$, from \eqref{indicatorp} we have 
$$\e \big[b_s(\tM)\big|M\big]\un_{\good_s} \ll \x/ \hat{d} \le 1$$
 for each $s\in [0,\x-1]$, where we used \eqref{exub}. 
 Moreover, since $b_s(\tM)|M$ is a sum of independent indicator variables, from Bernstein's inequality we have that for any $\lambda> 0$, 
$$\pro{b_s(\tM)\ge \lambda \,\Big|\,M} \un_{\good_s}\ll \exp(-c\lambda)$$
and so
\begin{align*}
\pro{ b_s(M) \ge \lambda} &=\pro{ b_s(\tM) \ge \lambda} \\
&\le  \pro{\good_s^c} + \e \pro{ b_s(\tM) \ge \lambda \big| M} \un_{\good_s} \\
&\ll \hat{d}\expo{-c\hat{d}} + e^{-c\lambda}.
\end{align*}
By pigeonholing and a union bound it follows that
\begin{align*}
\pro{ b(M) \ge \lambda \hat{d} } &\le \hat{d} \;\pro{ b_0(M) \ge \lambda}  \\
&\ll \hat{d}^2 \expo{-c\hat{d}} + \hat{d} e^{-c\lambda}.
\end{align*}

It remains to establish \Cref{claim_bad}. 
Fix $s$ and 
$(j^+,j^-)$ as in the claim. 
Consider the walk $w=w_{(j^+,j^-)}:[n]\rightarrow \Z$ associated to the pair of columns $(X_{j^+}, X_{j^-})$ as defined in \eqref{def_walk}. 
Since $(j^+,j^-)\in \Ex_M(1,2)\times\Ex_M(2,1)$ by assumption, we have $M_{(1,2)\times(j^+,j^-)}=\eye$, and so $w(1)=1$ and $w(2)=0$. 
The event that $(j^+,j^-)\in B(M)$ is the event that there is no $i\in [3,n]$ such that $w(i)=1$, i.e.\ that $w$ is ``non-crossing" in this range. 
Let us condition on the number $r$ of steps taken to the right by $w$; by our restriction to $\good_s$ we have 
$$r=d-\co_{M^\tran}(j^+,j^-) \ge \frac12 (1-p)d.$$
Conditional on $r$, in the randomness of the resampling of $(X_{j^+}, X_{j^-})$ we have that every ordering of the $r-1$ left steps of $w$ and $r-1$ right steps in the range $[3,n]$ is equally likely. 
There are ${2(r-1)\choose r-1}$ such orderings, while the number of these giving non-crossing walks is the Catalan number
$$\frac{1}{r}{2(r-1)\choose r-1}.$$
It follows that under the resampling, the probability that $(j^+,j^-)\in B(M)$ is 
$$\frac{1}{r} \ll \frac{1}{(1-p)d} \ll \frac{1}{\hat{d}}.$$ Undoing the conditioning on $r$, the claim follows.
\end{proof}

Now we can get a good lower tail estimate on $\co(M)$ and complete the proof of \Cref{thm_codeg}.

Fix $\eps\ge0$. 
If $\eps\ge 1$ then the result already follows from \Cref{prop_codeg} and a union bound as the lower tail event is empty in this case.
Hence we may assume $\eps<1$.
We may further assume that $\eps$ is sufficiently small by adjusting the constant $c$ in the statement of the theorem. 

For $\lambda\ge 0$ and $i_1,i_2\in [n]$ distinct, let 
$$\bad_{(i_1,i_2)}(\lambda)=\set{b_{(i_1,i_2)}(M)\ge \lambda\hat{d}}$$
and 
$$\good(\lambda):= \bigwedge_{i_1\ne i_2\in [n] }\bad_{(i_1,i_2)}(\lambda)^c$$
From \Cref{lem_gbound} and a union bound, we have
\begin{equation}
\pro{\good(\lambda)^c} \ll n^2\Big(\hat{d}^2 \expo{-c\hat{d}} + \hat{d} e^{-c\lambda} \Big).
\end{equation}
Restricting to the good event, we can bound
\begin{align*}
\pro{\good(\lambda)\wedge\set{\co(M) \le (1-\eps)\hat{\pe}^2n}} &= \pro{\good(\lambda)\wedge\set{f(M)\le -\eps\hat{\pe}^2n+\frac1n b(M)}} \\
&\le  \pro{f(M)\le -\hat{\pe}(\eps\hat{d} - \lambda)}.
\end{align*}
Taking $\lambda=\eps \hat{d}/2$ and applying \Cref{thm_chatterjee} (with the bound \eqref{vfbound} on $v_f(M)$) the last quantity is bounded by $\expo{-c\eps^2\hat{\pe}^2n}$.
Putting it all together, denoting
$$\bad = \bigvee_{\set{i_1,i_2}\subset [n]} \set{\big| \co_M(i_1,i_2)-\pe^2n\big|\ge \eps\hat{\pe}^2n}$$
we have
\begin{align*}
\pro{\bad} &\le \pro{\good(\lambda)^c} + \sum_{\set{i_1,i_2}\subset[n]} \pro{\good(\lambda)\wedge\Big\{\big| \co_M(i_1,i_2)-\pe^2n\big|\ge \eps\hat{\pe}^2n\Big\}}\\
&\ll  n^2\hat{d}^2\expo{-c\eps \hat{d}} + n^2 \expo{-\frac{c\eps^2}{1+\eps}\frac{\hat{d}^2}{n}}.
\end{align*}
\end{proof}

\section{Concentration of edge counts}		\label{sec_edge}

In this section we prove part (2) of \Cref{thm_main}, using \Cref{thm_chatterjee} with the switching coupling of \Cref{lem_switching}.
A crucial ingredient will be the control on codegrees enforced by restriction to the event $\good^{\co}(\eta)$. 
The reader may wish to read the simpler proof of \Cref{thm_edgeperm} in \Cref{sec_perm} first, as it uses a similar switching on permutation matrices, but does not require restriction to the event $\good^{\co}(\eta)$. 

Fix $A,B\subset [n]$, and let us denote $|A|=a$, $|B|=b$. 
Without loss of generality we may assume 
\begin{equation}	\label{assumeab}
a+b\le n.
\end{equation}
Indeed, as noted in \eqref{eventid}, for any $t\in \R$,
$$\set{e_M(A,B)-\mu(A,B)\ge t} = \set{e_M(A^c,B^c) - \mu(A^c,B^c) \ge t}.$$
Hence, if we establish the claim assuming $a+b\le n$, then for the case that $a+b>n$ we can apply the claim to $(A^c,B^c)$ rather than $(A,B)$.
Under assumption \eqref{assumeab} we have
\begin{equation}
\hat{\mu}(A,B) = \pe\min\big[ab, (n-a)(n-b)\big] = \pe ab = \mu(A,B).
\end{equation}

We define an exchangeable pair of rrd matrices $(M,\tM)$ as follows. Draw $M$, and sample 
$$I_1\in A,\; I_2\in [n]\setminus A,\;  J_1\in B,\; J_2\in [n]\setminus B$$
uniformly from their respective ranges, independently of each other and of $M$. 
Conditional on $M,I_1,I_2,J_1,J_2$, form $\tM$ by performing a switching at the minor $(I_1,I_2)\times (J_1,J_2)$.
$(M,\tM)$ is an exchangeable pair by \Cref{lem_switching}. 

Let us denote
$$K_{ab}:=a(n-a)b(n-b).$$
Define the antisymmetric function $F:\mM_{n,d}\times\mM_{n,d}\rightarrow \R$ by 
$$F(M_1,M_2)=K_{ab}\Big[e_{M_1}(A,B) - e_{M_2}(A,B)\Big].$$
Denote the sampled $2\times 2$ minor $M_{(I_1,I_2)\times(J_1,J_2)}$ by $\hat{M}$.
We have
$$\event:=\set{\mbox{$\hat{M}$ is switchable}} = \set{\hat{M}=\eye}\vee \set{\hat{M}=\jay}$$
and
$$F(M,\tM)=K_{ab}\big[\un(\hat{M}=\eye)-\un(\hat{M}=\jay)\big].$$
We have
\begin{align}
f(M)&:= \e\left[F(M,\tM)\big|M\right] \notag\\
&= K_{ab}\Big[\pro{\hat{M}=\eye\,\Big|\,M}-\pro{\hat{M}=\jay\,\Big|\,M} \Big]\label{fd1}\\
&=\sum_{(i_1,i_2)\in A\times A^c} \bigg(|\Ex_M(i_1,i_2)\cap B| |\Ex_M(i_2,i_1)\cap B^c| 
\notag\\
&\hspace{4cm}- |\Ex_M(i_2,i_1)\cap B||\Ex_M(i_1,i_2)\cap B^c|\bigg)		\label{fd2}
\end{align}
(with $A^c=[n]\setminus A$, $B^c=[n]\setminus B$). 

Before proceeding to control the expression $v_f(M)$ from \Cref{thm_chatterjee}, let us show how $f(M)$ is related to $e_M(A,B)$. 
Recalling the notation
\begin{align*}
\ex_M(i_1,i_2)&=d-\co_M(i_1,i_2) \\
&=|\Ex_M(i_1,i_2)| = |\Ex_M(i_2,i_1)|
\end{align*}
we re-express the summand in \eqref{fd2} as 
\begin{align*}
&\quad|\Ex_M(i_1,i_2)\cap B|\Big(\ex_M(i_1,i_2)-|\Ex_M(i_2,i_1)\cap B|\Big) \\
&\quad\quad\quad\quad\quad\quad\quad\quad- |\Ex_M(i_2,i_1)\cap B|\Big(\ex_M(i_1,i_2)-|\Ex_M(i_1,i_2)\cap B|\Big) \\
&\quad\quad\quad\quad= \ex_M(i_1,i_2)\big(|\Ex_M(i_1,i_2)\cap B|-|\Ex_M(i_2,i_1)\cap B|\big)  \\
&\quad\quad\quad\quad =  \ex_M(i_1,i_2)\big(|\mN_M(i_1)\cap B|-|\mN_M(i_2)\cap B|\big).
\end{align*}
Putting this in \eqref{fd2} we have
\begin{equation}
f(M) = 
\sum_{(i_1,i_2)\in A\times A^c}\ex_M(i_1,i_2)\Big(|\mN_M(i_1)\cap B|-|\mN_M(i_2)\cap B|\Big).		\label{fd3}
\end{equation}
On $\good^{\co}(\eta)$ the quantities $\ex_M(i_1,i_2)$ all lie in $p(1-p)n[1-\eta,1+\eta]$.
Writing 
$$\ex_M(i_1,i_2)=p(1-p)n +\big(\ex_M(i_1,i_2)-p(1-p)n\big)$$ 
we can express
\begin{equation}	\label{fd4}
f(M)=f_1(M)+f_2(M)
\end{equation}
where we define the ``main term"
\begin{align*}
f_1(M) &:= p(1-p)n \sum_{(i_1,i_2)\in A\times A^c} |\mN_M(i_1)\cap B|-|\mN_M(i_2)\cap B| \\
&= p(1-p)n\big[(n-a)e_M(A,B)-ae_M(A^c,B)\big] \\
&=  p(1-p)n^2 \big[e_M(A,B)-\pe ab\big].
\end{align*}
and the ``error term"
\begin{align*}
f_2(M):= \sum_{(i_1,i_2)\in A\times A^c}\big(\ex_M(i_1,i_2)-p(1-p)n\big)\Big(|\mN_M(i_1)\cap B|-|\mN_M(i_2)\cap B|\Big).
\end{align*}

We now show that $f_2(M)$ is small on $\good^{\co}({\eta})$ if ${\eta}$ is sufficiently small, so that on this event $f(M)\approx f_1(M)$, a scaling and centering of $e_M(A,B)$.
Indeed, letting ${\eta}>0$ to be chosen later, 
\begin{align}
|f_2(M)|\un_{\good^{\co}({\eta})} &\le {\eta}p(1-p)n\sum_{(i_1,i_2)\in A\times A^c}|\mN_M(i_1)\cap B|+|\mN_M(i_2)\cap B| \notag\\
& =  {\eta}p(1-p)n \big[\, (n-a)e_M(A,B) + a e_M(A^c,B)\,\big] \notag\\
&\le {\eta}p(1-p)n^2\left[\, e_M(A,B)-\mu(A,B) + 2\mu(A,B) \,\right] \notag\\
&={\eta} \big[\, f_1(M) + 2p(1-p)n^2 \mu\,\big]		\label{f2bd}
\end{align}
where in the third line we added and subtracted $\mu(A,B)=dab/n$ and used $e_M(A^c,B) \le db.$

Now we will bound the quantity
$$v_f(M):=\frac{1}{2}\e\left[\big|\big(f(M)-f(\tM)\big)F(M,\tM)\big|\Big|M\right]$$
from \Cref{thm_chatterjee}.
First we bound $|f(M)-f(\tM)|$ by considering the expression \eqref{fd2}. 
Since $M$ and $\tM$ only differ on the $(I_1,I_2)\times (J_1,J_2)$ minor, the only summands in \eqref{fd2} that do not cancel in $f(M)-f(\tM)$ have indices in the set
$$\mI:=\Big\{(i_1,i_2)\in A\times A^c: \mbox{ either $i_1=I_1$ or $i_2=I_2$ (or both)}\Big\}.$$ 
Now note that for any $(i_1,i_2)\in \mI$, 
$$\big||\Ex_M(i_1,i_2)\cap B| - |\Ex_{\tM}(i_1,i_2)\cap B|\big|\le \un_{\event}$$
since this set only changes (possibly) if $\hat{M}$ is switchable. 
We have the same bound for the pair neighborhoods $\Ex_M(i_2,i_1)$ and with $B$ replaced by $B^c$. 
Using these bounds with \eqref{fd2} we have 
\begin{align*}
|f(M)-f(\tM)| &\le \sum_{(i_1,i_2)\in \mI} \un_\event 
\Big[ |\Ex_M(i_1,i_2)\cap B| + |\Ex_{\tM}(i_2,i_1)\cap B^c| \\
&\quad \hspace{3cm}+ |\Ex_{\tM}(i_2,i_1)\cap B| + |\Ex_M(i_1,i_2)\cap B^c| \Big] \\
&= \sum_{(i_1,i_2)\in \mI} \un_\event 
\Big[ |\Ex_M(i_1,i_2)| + |\Ex_{\tM}(i_2,i_1)| 	\Big] \\
&\le 2|\mI| \hat{d}\un_\event \\
&= 2n\hat{d}\un_\event
\end{align*}
where in the third line we applied the upper bound \eqref{exub} for $M$ and $\tM$.

Now since 
$$\frac{1}{K_{ab}}|F(M,\tM)|=|\un(\hat{M}=\eye)-\un(\hat{M}=\jay)|\le\un_\event$$
we have
\begin{align*}
v_f(M) &\le n\hat{d}K_{ab}\pr(\event|M).
\end{align*}
We want to show that $f(M)$ is ``self bounding" in the sense that we can control
$v_f(M)$ by an expression of the form $K_1+K_2f(M)$ for some constants $K_1,K_2>0$ (possibly depending on $n,d,a,b$). 
Since 
$$\pr(\event|M)=\pro{\hat{M}=\eye\,\Big|\,M}+\pro{\hat{M}=\jay\,\Big|\,M}$$
we have
\begin{align}
v_f(M) &\le n\hat{d}K_{ab}\left[\pro{\hat{M}=\eye\,\Big|\,M}+\pro{\hat{M}=\jay\,\Big|\,M}\right]	\notag\\
& = n\hat{d}\left[ f(M)+2K_{ab}\,\pro{\hat{M}=\jay\,\Big|\,M}\right]	\label{self}
\end{align}
where we have used \eqref{fd1} in the second line. 
Writing
\begin{align*}
\pro{\hat{M}=\jay\,\Big|\,M} & = \frac{1}{K_{ab}}\sum_{(i_1,i_2)\in A\times A^c}|\Ex_M(i_2,i_1)\cap B||\Ex_M(i_1,i_2)\cap B^c|
\end{align*}
we can crudely bound $|\Ex_M(i_1,i_2)\cap B^c|\le \hat{d}$ and $|\Ex_M(i_2,i_1)\cap B|\le |\mN_M(i_2)\cap B|$ (from monotonicity) to get
\begin{align*}
K_{ab}\pro{\hat{M}=\jay\,\Big|\,M} &\le \hat{d}a  \sum_{i_2\in A^c}  |\mN_M(i_2)\cap B| \\
&=\hat{d}a e_M(A^c,B) \\
&\le \hat{d}a (db) \\
&= \hat{d}n \mu(A,B).
\end{align*}
Combining the last line with \eqref{self} we conclude
\begin{equation}	\label{vfbound}
v_f(M)\le n\hat{d}\big[ f(M)+ 2n\hat{d}\mu\big].
\end{equation}

From \eqref{f2bd}, on $\good^{\co}({\eta})$ we have
\begin{align*}
f(M) &= f_1(M) +f_2(M) \\
&\ge f_1(M) - |f_2(M)| \\
&\ge (1-{\eta})f_1(M) - 2{\eta}p(1-p)n^2 \mu.
\end{align*}
It follows that for ${\eta},t\ge0$ fixed,  
\begin{align}
\pro{ \good^{\co}({\eta}) \wedge \Big\{e_M(A,B)-\mu\ge t\Big\} }
&= \pro{\good^{\co}({\eta})\wedge \Big\{ f_1(M)\ge p(1-p)n^2 t \Big\} }		\notag\\
&\le \pr\Big(  f(M) \ge \big[(1-{\eta}) t -  2{\eta} \mu \big]p(1-p)n^2  \Big)	\label{uptl}
\end{align}
and
\begin{align}
\pro{ \good^{\co}({\eta}) \wedge \Big\{e_M(A,B)-\mu\le -t\Big\} }
&= \pro{\good^{\co}({\eta})\wedge \Big\{ f_1(M)\le -p(1-p)n^2 t \Big\} }		\notag \\
&\le \pr\Big(  f(M) \le - \big[t - 2{\eta} \mu \big]p(1-p)n^2   \Big).	\label{dntl}
\end{align}

Let us scale $t=\tau\mu$. 
If we take ${\eta}\le \min(1/4, \tau/8)$, then from \eqref{uptl}
\begin{equation}
\pro{ \good^{\co}({\eta}) \wedge \Big\{e_M(A,B)-\mu\ge \tau\mu\Big\} }
\le \pro{ f(M) \ge \frac\tau2p(1-p)n^2 \mu }		\label{tochat}
\end{equation}
Now we may apply \Cref{thm_chatterjee} to the right hand side of \eqref{tochat} with 
\begin{equation}	\label{chosenK}
K_1=2n^2\hat{d}^2\mu, \quad K_2 = n\hat{d}
\end{equation}
from \eqref{vfbound} to bound 
\begin{align*}
\pro{ \good^{\co}({\eta}) \wedge \Big\{e_M(A,B)-\mu\ge \tau\mu\Big\} }
&\le \expo{ -\frac{ \big(\frac12 \tau p(1-p)n^2 \mu\big)^2}{2n\hat{d} \big( 2n\hat{d}\mu + \frac12\tau p(1-p)n^2 \mu \big)}}\\
& = \expo{ -\frac{ \tau^2 \mu}{8\frac{\hat{d}}{p(1-p)n} \big( 2\frac{\hat{d}}{p(1-p)n} + \frac12\tau  \big)}}\\
&\le \expo{ -\frac{ \tau^2 \mu}{64+ 8\tau  }}\\
\end{align*}
where in the last line we used that
$$\frac{\hat{d}}{p(1-p)n} = \frac{\min(d,n-d)}{\frac{1}{n}d(n-d)} \le 2.$$

The lower tail is obtained similarly from \eqref{dntl} and \Cref{thm_chatterjee} (and only requiring that we take ${\eta}\le \tau/4$).

\subsection*{Acknowledgement}
The author thanks the anonymous referees for various corrections and helpful suggestions to improve the paper.

\bibliographystyle{abbrv}
\bibliography{biblio_discrep.bib}

\end{document}